\newenvironment{proof_sketch}{\noindent{\it Sketch of
    Proof.}\hspace*{1em}}{\qed\bigskip} 
\theoremstyle{plain}
\newtheorem{theorem}[equation]{Theorem}
\newtheorem{corollary}[equation]{Corollary}
\newtheorem{lemma}[equation]{Lemma}
\newtheorem{proposition}[equation]{Proposition}
\theoremstyle{definition}
\theoremstyle{remark}
\newtheoremstyle{indenteddefinition}{\topsep}{\topsep}{\addtolength{\leftskip}{2.0em}}{-0em}{\bfseries}{.}{
}{} 
\theoremstyle{indenteddefinition}
\DeclareMathOperator\rank{rank}
\DeclareMathOperator\Ad{Ad}
\DeclareMathOperator\Aut{Aut}
\DeclareMathOperator\Hom{Hom}
\DeclareMathOperator\HT{ht}
\DeclareMathOperator\End{End}
\DeclareMathOperator\tr{tr}
\DeclareMathOperator\gr{gr}
\DeclareMathOperator\Sig{Sig}
\DeclareMathOperator\Spin{Spin}
\newcommand\Tstrut{\rule{0pt}{2.6ex}}
\newcommand\Bstrut{\rule[-0.9ex]{0pt}{0pt}} 
\begin{document}
\title{Signatures for finite-dimensional representations of real reductive Lie groups}
\author{Daniil Kalinov 
  \\Department of
  Mathematics \\ MIT
\and David A. Vogan, Jr.\\Department of Mathematics\\ MIT\\
\and Christopher Xu \\MIT}
\begin{abstract}
We present a closed formula, analogous to the Weyl dimension formula,
for the signature of an invariant Hermitian form on any
finite-dimensional irreducible representation of a real reductive Lie
group, assuming that such a form exists. The formula shows in a
precise sense that the form must be very indefinite. For example, if
an irreducible representation of $GL(n,R)$ admits an invariant form of
signature $(p,q)$, then we show that $(p-q)^2 \le p+q$. The proof is
an application of Kostant's computation of the kernel of the Dirac
operator.
\end{abstract}
\date{\today}
\maketitle

\section{Introduction}\label{sec:intro}
\setcounter{equation}{0}
\begin{subequations}\label{se:introsig}
Suppose $G$ is a complex connected reductive algebraic group defined
over ${\mathbb R}$, and $G({\mathbb R})$ the group of real
points. Suppose that
\begin{equation}
  (\pi,V), \qquad \pi\colon G({\mathbb R}) \rightarrow GL(V) \simeq
  GL(\dim(\pi),{\mathbb C})
\end{equation}
is an irreducible finite-dimensional complex representation of
$G({\mathbb R})$. Of course Weyl's dimension formula provides a simple
closed formula for $\dim(\pi)$. It often happens that $V$ admits a
non-zero $G({\mathbb R})$-invariant Hermitian form
\begin{equation}
  \langle,\rangle_\pi\colon V \times V \rightarrow {\mathbb C}, \qquad
  \langle \pi(g)v,\pi(g)w\rangle = \langle v,w\rangle.
\end{equation}
In this case Schur's lemma guarantees that the form is non-degenerate,
and unique up to a nonzero real factor. Sylvester's law says that the
form has a signature
\begin{gather}
(p(\pi),q(\pi)), \quad p(\pi) + q(\pi) = \dim(\pi), \\ \pi\colon G
  \rightarrow U(V,\langle\cdot,\cdot\rangle) \simeq U(p(\pi),q(\pi)).
\end{gather}
Changing the form by a positive factor does not change $p(\pi)$ and
$q(\pi)$, and changing it by a negative factor interchanges
them. Therefore both the absolute value of the difference and the
unordered pair
\begin{equation}
  \Sig(\pi) =_{\text{\textnormal{def}}} |p(\pi) - q(\pi)|, \qquad
  \Sigma(\pi) =_{\text{\textnormal{def}}} \{p(\pi),q(\pi)\}
\end{equation}
are well-defined whenever $\pi$ is finite-dimensional irreducible, and
admits a non-zero invariant form. Because $\dim(\pi)$ is computable,
calculating $\Sigma(\pi)$ is equivalent to calculating the
non-negative integer $\Sig(\pi)$. That calculation is the main result
of this paper (Theorem \ref{thm:Gsig}), with a formula nearly as easy
to calculate as Weyl's dimension formula. Because the general case
involves a number of slightly subtle technicalities, we will in this
introduction state only a special case.
\end{subequations} 

\begin{theorem}\label{thm:GLsig} Suppose $G=GL(n,{\mathbb R})$, and
  $$\lambda=(\lambda_1,\ldots,\lambda_n)$$
  is a decreasing sequence of integers. Write
  $$(\pi_{\mathbb C}(\lambda),V(\lambda)) = \text{algebraic
    representation of $GL(n,{\mathbb C})$ of highest weight
    $\lambda$}.$$
  Write
  $$n=2m+\epsilon, \qquad m=[n/2], \qquad \epsilon = 0\ \text{or}\ 1.$$
  \begin{enumerate}
    \item The restriction $\pi(\lambda)$ to
      $GL(n,{\mathbb R})$ is still irreducible.
      \item The representation $\pi(\lambda)$ of $GL(n,{\mathbb R})$
        admits an invariant Hermitian form if and only if
        $$\lambda = (-\lambda_n, -\lambda_{n-1},\ldots,-\lambda_1);$$
        equivalently, if there is a decreasing sequence of nonnegative
        integers
        $$\mu = (\mu_1,\ldots,\mu_m)$$
        so that
        $$\lambda = \begin{cases}
          (\mu_1,\ldots,\mu_m,-\mu_m,\ldots,-\mu_1) & (\epsilon = 0)\\
          (\mu_1,\ldots,\mu_m,0,-\mu_m,\ldots,-\mu_1) & (\epsilon = 1)
        \end{cases}$$
 \item Suppose $\pi(\lambda)$ admits an invariant Hermitian
   form. Define $\sigma(\mu)$ to be the irreducible representation of
   $\Spin(n)$ of highest weight $\mu+(1/2,\ldots,1/2)$. Then
   $$\Sig(\pi(\lambda)) = \dim(\sigma(\mu))/2^{m-1+\epsilon}.$$
  \end{enumerate}
\end{theorem}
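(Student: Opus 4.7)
\noindent\emph{Proof plan.} Part (1) is immediate from Zariski density: $GL(n,\mathbb{R})$ is dense in $GL(n,\mathbb{C})$ in the Zariski topology, so any $GL(n,\mathbb{R})$-invariant subspace of $V(\lambda)$ is $GL(n,\mathbb{C})$-invariant, hence $0$ or all of $V(\lambda)$.

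For Part (2), an irreducible finite-dimensional representation of $G(\mathbb{R})$ admits a nonzero invariant Hermitian form if and only if it is isomorphic to its Hermitian dual. Because $GL(n,\mathbb{R})$ is the split real form and $\lambda$ is integral, the complex conjugate representation $\bar\pi(\lambda)$ and $\pi(\lambda)$ coincide as $GL(n,\mathbb{R})$-modules, so the Hermitian dual has highest weight that of the algebraic contragredient $-w_0\lambda=(-\lambda_n,\ldots,-\lambda_1)$. The condition $\lambda=-w_0\lambda$ rephrases precisely as the explicit symmetric form on $\mu$ given in the statement.

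For Part (3), the strategy is to reinterpret the signature through the Dirac operator on $V(\lambda)\otimes S$, where $S$ is the spin module of the Clifford algebra on the Cartan complement $\mathfrak{p}$ of $\mathfrak{o}(n)$ in $\mathfrak{gl}(n,\mathbb{C})$ (the complex symmetric matrices). First I fix a positive-definite $U(n)$-invariant form on $V(\lambda)$ and write $\langle v,w\rangle_\pi=\langle Jv,w\rangle_+$ for a self-adjoint $\tilde K$-equivariant involution $J$, so that $\Sig(\pi(\lambda))=|\tr(J)|$ and the computation reduces to determining the sign of $J$ on each $\tilde K$-isotypic component of $V(\lambda)$. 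Next, tensoring with $S$ promotes $J$ to an involution commuting with the Dirac operator $D$, making $D$ self-adjoint for the combined Hermitian form; the signature character can then be read off the $\tilde K$-structure of $\ker D$. Kostant's theorem identifies $\ker D$ explicitly as a sum of $\tilde K$-types of highest weights $w(\lambda+\rho_{\mathfrak g})-\rho_{\mathfrak k}$ ranging over a Weyl-set. Under the symmetry hypothesis $\lambda=-w_0\lambda$, this collection consolidates so that, after projecting onto the $\Spin(n)$ factor of $\tilde K$ and dividing out the contribution of $S$ itself, the only piece that survives is $\sigma(\mu)$; the normalization $2^{m-1+\epsilon}$ enters as the dimension of the (half-)spin module of $\Spin(n)$ through which that division factors, yielding $\Sig(\pi(\lambda))=\dim(\sigma(\mu))/2^{m-1+\epsilon}$.

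The main obstacle I anticipate is that $(\mathfrak{gl}(n),\mathfrak{o}(n))$ is \emph{not} an equal-rank symmetric pair (the ranks are $n$ and $m$), so the standard Kostant / Huang--Pandzic kernel theorem does not apply verbatim. One must therefore work with a more compact Cartan inside $\mathfrak{gl}(n)$ and track an alternating sum over the associated Weyl-set, or else use a cubic-Dirac variant adapted to the non-equal-rank situation. The crux is verifying that the induced Hermitian form on the $\sigma(\mu)$-isotypic subspace of $\ker D$ is in fact \emph{definite}, so that the signature equals the dimension up to a controlled sign; this definiteness is what makes the closed formula come out as a single positive integer rather than a more complicated alternating sum.
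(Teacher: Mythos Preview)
Your overall strategy is the paper's: tensor with the spin module, use self-adjointness of Parthasarathy's Dirac operator so that the signature of $V(\lambda)\otimes S$ is carried by the top $D^2$-eigenspace, identify that eigenspace via Kostant's multiplet theorem as a sum of $\widetilde K_0$-types of highest weight $w(\lambda_c+\rho_G)-\rho_K$ over a coset set $W^1\subset W^\theta$, and then divide by $\dim S$. You have also correctly flagged the non-equal-rank issue; the paper handles it by working on the maximally compact Cartan $H_c=T_cA_c$ and with the \emph{restricted} root system, where the relevant index set $W^1=W_{K_0}\backslash W^\theta$ is, for $GL(2m,\mathbb R)$, just $\{1,s_m\}$ (and is trivial for $n$ odd).

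There is, however, a genuine missing step in your Part~(3). The $\widetilde K_0$-types that actually appear in the Dirac kernel are \emph{not} $\sigma(\mu)$: for $n=2m$ they have highest weights $(2\mu_1+m,\ldots,2\mu_{m-1}+2,\pm(2\mu_m+1))$, i.e.\ $2\mu+(m,m-1,\ldots,1)$ up to the last sign. Your sentence ``this collection consolidates so that \ldots\ the only piece that survives is $\sigma(\mu)$'' skips the nontrivial identification. The paper gets from these representations to $\sigma(\mu)$ by the \emph{homogeneity} of the Weyl dimension formula for $D_m$ (resp.\ $B_m$): since $\dim E$ is homogeneous of degree $|R^+_K|$ in the infinitesimal character, $\dim E(2\psi+\rho_K)=2^{|R^+_K|}\dim E(\psi)$, and plugging $\psi=\mu+(1/2,\ldots,1/2)$ collapses the power of $2$ in the denominator from $2^r$ (with $r=m^2$ for $n$ even) down to $2^{m-1+\epsilon}$. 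Without this manipulation you will not arrive at the clean formula stated.

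Your worry about definiteness is resolved in the paper not by an abstract argument but by an explicit computation of the sign $\epsilon(w)$ of the form on each extremal restricted-weight line (their Corollary~\ref{cor:extrsig}), which reduces to an $SU(1,1)$ calculation along noncompact-imaginary root strings. For integer $\mu$ one finds $\epsilon(1)=\epsilon(s_m)=+1$, so the two $\widetilde K_0$-types in the kernel contribute with the same sign and the signature is a genuine sum of dimensions, not an alternating one. (For half-integer $\mu$ the signs cancel and $\Sig=0$, which your outline does not predict.) You should make this sign computation explicit rather than leave it as ``the crux.''
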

The denominator in the last formula is the dimension of an irreducible
(half) spin representation of $\Spin(n)$, of highest weight
$(1/2,\ldots,1/2)$. That it always divides the numerator is a
classical fact about representations of spin groups. Of course the
division is needed to make the formula give the correct signature of
$+1$ in case $\lambda=0$.

This formulation is a bit misleading. The general result Theorem
\ref{thm:Gsig} involves for $GL(n)$ a rather different representation of
$\Spin(n)$, of highest weight
$$2\mu + (m-1+ \epsilon/2, m-3+\epsilon/2,\ldots,\epsilon/2).$$
The proof of Theorem \ref{thm:GLsig} will then follow by a formal manipulation
of the Weyl dimension formula. We carry out the details at the end of
Section \ref{sec:dirac}.

\begin{subequations}\label{se:GLsig}
Nevertheless we can see in this special case some interesting behavior
of the signature. In what follows we use the notation of the theorem,
always assuming that
\begin{equation}
  \pi(\lambda)= \text{finite-dimensional Hermitian irreducible of
    $GL(n,{\mathbb R})$.}
\end{equation}
Because $SL(n,{\mathbb R})$ is noncompact and
simple, it cannot admit nontrivial finite-dimensional unitary
representations; that is, there can be no nontrivial homomorphism from
$SL(n,{\mathbb R})$ to $U(N)$. Consequently $\Sig(\pi(\lambda)) > 0$ whenever
$\lambda\ne 0$. It is not difficult to prove (for example, using the
structure of maximal tori in $U(p,q)$) a little more: a
nontrivial homomorphism from $SL(n,{\mathbb R})$ to $U(p,q)$ can exist
only if $|p-q| \ge n-1$.  That is,
$$\Sig(\pi(\lambda)) \ge n-1 \qquad (\lambda \ne 0).$$
This estimate is the best possible absolute bound, because
$$\Sig(\pi(1,0,\ldots,0,-1)) = n-1$$
either by Theorem \ref{thm:GLsig} or by direct calculation of the
invariant Hermitian form
$$\langle X,Y\rangle = \tr(X\overline{Y})$$
on the complexified adjoint representation (on $n\times n$ complex
matrices of trace zero).

One thing that Theorem \ref{thm:GLsig} shows is the ``typical'' behavior of
signatures. The Weyl dimension formula is a polynomial in $\lambda$:
\begin{equation}
  \deg_\lambda(\dim(\pi(\lambda))) = \left(n^2 - n\right)/2 =
  \binom{n}{2} = 
2m^2 + m(2\epsilon - 1).
\end{equation}
(The number of positive roots for $G$ is $(\dim G - \rank G)/2$.) The
signature formula in the theorem is also a polynomial in $\lambda$,
but now of degree
\begin{equation}
  \deg_\lambda(\Sig(\pi(\lambda))) = \left(\binom{n}{2} - [n/2]\right)/2 = m^2
  +m(\epsilon-1) \le \deg_\lambda(\dim)/2.
\end{equation}
The conclusion is that for ``generic'' $\lambda$,
\begin{equation}\label{eq:growth}
  \text{signature grows more slowly than square root of dimension:}
\end{equation}
the invariant Hermitian form is close to being maximally
isotropic. There is a similar statement for any real reductive $G({\mathbb R})$,
with square root replaced by
$$(\dim K-\rank(K))/(\dim G - \rank(G)).$$
Here $K({\mathbb R})$ is a maximal compact subgroup of $G({\mathbb
  R})$.

For $GL(n,{\mathbb R})$, the formulas are so simple and explicit
that we can calculate
\begin{equation}
  \dim(\pi(\lambda)) = \Sig(\pi(\lambda))^2 \cdot \prod_{i=1}^m
  \frac{2\lambda_i +n-2i+1}{n-2i+1}
\end{equation}
This is a much stronger version of \eqref{eq:growth}. It
would be fascinating to find a direct representation-theoretic
interpretation of this formula. (A difficulty is that for $n\ge 4$,
the last product (which is always at least $1$) need not be an
integer.)
\end{subequations} 

Here is how the rest of the paper is organized.  Section
\ref{sec:ident} recalls the highest weight parametrization of
finite-dimensional representations (Proposition \ref{prop:fin}) and
the identification of Hermitian representations (Proposition
\ref{prop:herm}). Section \ref{sec:resweyl} concerns the structure of
the ``restricted Weyl group;'' it can be omitted in the (very common)
case $\rank G = \rank K$. Section \ref{sec:hwtform} calculates the
signature of an invariant Hermitian form on extremal weight spaces
(Corollary \ref{cor:Wupper1}); this is easy, amounting to a
calculation in $SL(2,{\mathbb R})$. Section \ref{sec:dirac} recalls
from \cite{Kmult} and \cite{HKP}*{Theorem 4.2} elementary facts
(Proposition \ref{prop:Kmult}) about the eigenvalues of the Dirac
operator on finite-dimensional representations. A simple linear
algebra result (Lemma \ref{lemma:selfadjsig}), based on the
self-adjointness of the Dirac operator, then implies that the
signature of an invariant Hermitian form is essentially equal to the
signature on the kernel of the Dirac operator (Corollary
\ref{cor:D2}). Finally, we use the result from Section
\ref{sec:hwtform} to calculate the signature on the kernel of the
Dirac operator, and deduce our main result Theorem \ref{thm:Gsig}
calculating signatures for finite-dimensional representations of
arbitrary real reductive groups.

We thank Jeffrey Adams for pointing out to us the interesting behavior
of signatures of forms on finite-dimensional representations. The
third author, who is an MIT undergraduate student, embarked on an
exploration of this behavior using the {\tt atlas} software from
\cite{atlas} as a summer research project in 2018, under the guidance
of the first author, an MIT graduate student.  He discovered
experimentally the polynomial dependence on $\lambda$ in Theorem
\ref{thm:GLsig}. The first author found a way to bound signatures from
above, which for $GL(n,{\mathbb R})$ gave the formula in Theorem
\ref{thm:GLsig} as an upper bound for $\Sig(\pi(\lambda))$. (This
method of the first author is a version of Lemma
\ref{lemma:selfadjsig}.) At this point the second author, who was old
enough to remember \cite{Kmult}, was able to join the race at Hereford
Street.

\section{Weights and Hermitian representations}
\label{sec:ident}
\setcounter{equation}{0}
\begin{subequations}\label{se:realred}
  We continue as in \eqref{se:introsig} with
  \begin{equation}\label{eq:realred}\begin{aligned}
      G \quad &\text{complex connected reductive algebraic group}\\
      \sigma_{\mathbb R} \colon G \rightarrow G \quad &\text{antiholomorphic
        involutive automorphism}\\
      G({\mathbb R}) = G^{\sigma_{\mathbb R}} \quad &\text{real form of
        $G$}.
    \end{aligned}
  \end{equation}
  We will make constant use of a fixed {\em Cartan involution}
  \begin{equation}\label{eq:cartan}
    \theta\colon G \rightarrow G\quad \text{algebraic involutive
      automorphism;}
  \end{equation}
the characteristic requirement of $\theta$ is that the antiholomorphic
automorphism
\begin{equation}\label{eq:cptform}
  \sigma_c =_{\text{\textnormal{def}}} \theta\circ\sigma_{\mathbb R}
\end{equation}
is a compact real form of $G$. Then automatically
\begin{equation}\label{eq:K}
  K =_{\text{\textnormal{def}}} G^\theta
\end{equation}
is a (possibly disconnected) reductive subgroup of $G$,
preserved by $\sigma_{\mathbb R}$, and
\begin{equation}\label{eq:KR}
  K({\mathbb R}) = K^{\sigma_{\mathbb R}} = K^{\sigma_c}
\end{equation}
is a maximal compact subgroup of $G({\mathbb R})$. The {\em Cartan
  decomposition} of the Lie algebra is the eigenspace decomposition
under $\theta$:
\begin{equation}\label{eq:cartandecompA}\begin{aligned}
{\mathfrak s} &=_{\text{\textnormal{def}}} \text{$-1$ eigenspace of
  $\theta$},\\ {\mathfrak g} &= {\mathfrak k} + {\mathfrak
  s},\\ {\mathfrak g}({\mathbb R}) &= {\mathfrak k}({\mathbb R}) +
{\mathfrak s}({\mathbb R}).\\
\end{aligned}\end{equation}
What is much deeper and more powerful and is the Cartan decomposition of the
group:
\begin{equation}\label{eq:cartandecompB}
  G({\mathbb R}) = K({\mathbb R})\cdot \exp({\mathfrak s}({\mathbb
    R}));
\end{equation}
the map from right to left is a diffeomorphism.
\end{subequations} 

\begin{subequations} \label{se:realtori}
Every $\sigma_{\mathbb R}$-stable maximal torus $H\subset G$ has a
$G({\mathbb R})$-conjugate which is preserved by $\theta$. We
therefore consider
\begin{equation}\label{eq:maxtorA}\begin{aligned}
    H &\subset G \quad \text{maximal torus}\\
    \sigma_{\mathbb R}(H) &= H, \qquad \theta(H) = H\\
    H({\mathbb R}) &=_{\text{\textnormal{def}}} H^{\sigma_{\mathbb R}}
    \quad\text{real points of $H$}\\
    T &=_{\text{\textnormal{def}}} H^\theta = H\cap K\\
        T({\mathbb R}) &=_{\text{\textnormal{def}}} H({\mathbb
          R})^\theta = H({\mathbb R})\cap K({\mathbb R})\\
            &= \ \text{maximal compact subgroup of $H({\mathbb R}$}.\\
  \end{aligned}
\end{equation}
Notice that $T$ is a reductive abelian algebraic group, and
$T({\mathbb R})$ its (unique) compact real form. If we define
$${\mathfrak a} = {\mathfrak h}({\mathbb R}) \cap {\mathfrak s},
\qquad A = \exp({\mathfrak a}).$$
then the Cartan decomposition \eqref{eq:cartandecompB} gives a Lie
group direct product decomposition
\begin{equation}\label{eq:maxtorB}
  H({\mathbb R}) = T({\mathbb R}) \times A.
\end{equation}
The group $A$ is {\em not} algebraic: if we define $B=H^{-\theta}$
then $B$ is an abelian algebraic group, and
$$A = \text{Lie group identity component of $B({\mathbb R})$}$$
The notation in \eqref{eq:maxtorB} (particularly for $A$) is very
traditional and rather useful (for describing continuous characters of
$H({\mathbb R})$, for example). But the non-algebraic nature of $A$
must always be remembered.

The roots of $H$ in $G$ are complex-valued algebraic (and in
particular holomorphic) characters of $H$:
\begin{equation}\label{eq:root}
  \alpha \colon H \rightarrow {\mathbb C}^\times \qquad (\alpha \in
  R(G,H)).
\end{equation}
As holomorphic characters, the roots are determined by their
restrictions to $H({\mathbb R})$, or the differentials of those
restrictions:
\begin{equation}
  \alpha_{\mathbb R}\colon H({\mathbb R})\rightarrow {\mathbb
    C}^\times, \qquad d\alpha_{\mathbb R}\colon {\mathfrak h}({\mathbb
    R})\rightarrow {\mathbb C}.
\end{equation}
We will often write just $\alpha$ for either $\alpha_{\mathbb R}$ or
its differential, relying on the context to avoid ambiguity. But for
the structural results we are now describing, it is helpful to
maintain an explicit distinction. In accordance with tradition, we
will write the group structure on roots as $+$, even though it
corresponds to multiplication of characters of $H$.

Because the automorphism $\theta$ is assumed to preserve $H$, it
automatically acts on the roots. A moment's thought shows that
$\sigma_{\mathbb R}$ also permutes the root spaces, and therefore acts
on the roots by the requirement
$$[\sigma_{\mathbb R}(\alpha)](h) =_{\text{\textnormal{def}}}
\overline{\alpha(\sigma_{\mathbb R}^{-1}(h))}.$$ 
These two actions are related by
\begin{equation}\label{eq:sigmatheta}
  \theta(\alpha) = \sigma_{\mathbb R}(-\alpha).
\end{equation}

The root $\alpha$ is called {\em real} if $d\alpha_{\mathbb R}$ is
real valued (equivalently, if $\alpha_{\mathbb R}$ is
real-valued). Because of \eqref{eq:sigmatheta},
\begin{equation}\label{eq:real}
  \text{$\alpha$ is real} \iff \sigma_{\mathbb R}(\alpha) = \alpha
  \iff \theta(\alpha) = -\alpha.
\end{equation}
In case $\alpha$ is real, the root subgroup
$$\phi_{\alpha} \colon SL(2) \rightarrow G$$
may be chosen to be defined over ${\mathbb R}$ with the standard real
form of $SL(2)$:
\begin{equation}
  \phi_{\alpha} \colon SL(2,{\mathbb R}) \rightarrow G({\mathbb R})
\end{equation}

The root $\beta$ is called {\em imaginary} if
$d\beta_{\mathbb R}$ is imaginary-valued (equivalently, if
$\beta_{\mathbb R}$ takes values in the unit circle). Because of
\eqref{eq:sigmatheta},
\begin{equation}\label{eq:imag}
  \text{$\beta$ is imaginary} \iff \sigma_{\mathbb R}(\beta) = -\beta
  \iff \theta(\beta) = \beta.
\end{equation}
In case $\beta$ is imaginary, the root subgroup $\phi_\beta$ is
defined over ${\mathbb R}$, but with one of two different real forms
of $SL(2)$. In case
\begin{equation}
  \phi_{\beta} \colon SU(1,1) \rightarrow G({\mathbb R}),
\end{equation}
we say that $\beta$ is {\em noncompact imaginary}. In case
\begin{equation}
  \phi_{\beta} \colon SU(2) \rightarrow G({\mathbb R}),
\end{equation}
we say that $\beta$ is {\em compact imaginary}.

Finally, the root $\gamma$ is called {\em complex} if
$d\gamma_{\mathbb R}$ is neither real nor purely imaginary valued
(equivalently, if $\gamma_{\mathbb R}$ takes non-real values of
absolute value not equal to $1$). Because of \eqref{eq:sigmatheta},
\begin{equation}\label{eq:cplx}
  \text{$\gamma$ is complex} \iff \sigma_{\mathbb R}(\gamma) \ne \pm\gamma
  \iff \theta(\gamma) \ne \pm \gamma.
\end{equation}
It is equivalent to require that the root subgroup
$$\phi_\gamma \colon SL(2) \rightarrow G$$
is not defined over ${\mathbb R}$ for any real structure on $SL(2)$.
\end{subequations} 

\begin{subequations} \label{se:maxsplit}
There are (up to conjugation by $K({\mathbb R})$) two maximal tori of
particular interest to us. First is the {\em maximally split torus}
\begin{equation}\label{eq:maxsplit}
  H_s({\mathbb R}) = T_s({\mathbb R}) \cdot A_s.
\end{equation}
This torus is characterized by the three equivalent requirements
\begin{equation}\begin{aligned}
  &\text{$\dim_{\mathbb R}A_s$ is as large as possible}\\
    &\text{$\dim_{\mathbb R}T_s({\mathbb R})$ is as small as possible}\\
    &\text{there are no noncompact imaginary roots of $H_s$ in $G$.}
  \end{aligned}
\end{equation}
Inside the root system $R(G,H_s)$ we can find a set of positive roots
$R^+_s = R^+(G,H_s)$ satisfying
\begin{equation}
  \text{the nonimaginary roots in $R^+_s$ are preserved by $-\theta$.}
\end{equation}
There is a unique Weyl group element specified by the requirement
$$w_{0,s}(R^+(G,H_s) = \theta R^+(G,H_s);$$
it commutes with $\theta$ (as an automorphism of $H$), and so
acts on $T({\mathbb R})$ and $A$.
\end{subequations} 

\begin{subequations} \label{se:maxcpt}
Next, the {\em maximally compact torus} (sometimes called the {\em
  fundamental torus})
\begin{equation}\label{eq:maxcpt}
  H_c({\mathbb R}) = T_c({\mathbb R}) \cdot A_c.
\end{equation}
This torus is characterized by the four equivalent requirements
\begin{equation}\begin{aligned}
  &\text{$\dim_{\mathbb R}A_c$ is as small as possible}\\
    &\text{$\dim_{\mathbb R}T_c({\mathbb R})$ is as large as
      possible}\\
    &\text{$T_c({\mathbb R})_0$ is a maximal torus in $K({\mathbb R})_0$}\\
    &\text{there are no real roots of $H_c$ in $G$.}
  \end{aligned}
\end{equation}
Inside the root system $R(G,H_c)$ we can find a set of positive roots
$R^+_c = R^+(G,H_c)$ satisfying
\begin{equation}
  \text{$R^+_c$ is preserved by $\theta$.}
\end{equation}
\end{subequations} 

\begin{subequations}\label{se:realweights}
Our next goal is to recall the parametrization (due to Cartan and
Weyl) of finite-dimensional representations of $G({\mathbb R})$ by highest
weights.  In order to do that, we need two more bits of notation. For
each root $\alpha$, recall that the {\em coroot $\alpha^\vee$} is the
restriction to the maximal torus of the root $SL(2)$:
\begin{equation}
  \alpha^\vee\colon {\mathbb C}^\times \rightarrow H, \quad
  \alpha^\vee(z) = \phi_\alpha\begin{pmatrix} z & 0 \\ 0 &
  z^{-1}\end{pmatrix}.
\end{equation}
The homomorphism $\phi_\alpha$ is unique only up to conjugation by
diagonal matrices in $SL(2)$, but $\alpha^\vee$ is (therefore)
absolutely unique. The homomorphism $\alpha^\vee$ is specified by its
differential
\begin{equation}
  H_\alpha =_{\text{\textnormal{def}}} d\phi_\alpha \begin{pmatrix} 1 & 0 \\ 0 &
    -1\end{pmatrix} \in {\mathfrak h}.
\end{equation}
If $\alpha$ is {\em real}, so that $\phi_\alpha$ is defined over
${\mathbb R}$, then we can define
\begin{equation}
  m_\alpha =_{\text{\textnormal{def}}} \phi_\alpha \begin{pmatrix} -1
    & 0 \\ 0 & -1\end{pmatrix} = \alpha^\vee(-1) = \exp(i\pi H_\alpha)
    \in H({\mathbb R}),
\end{equation}
an element of order (one or) two in the real Cartan subgroup
$H({\mathbb R})$.

A character
\begin{equation}\begin{aligned}
  \gamma&\colon H({\mathbb R}) \rightarrow {\mathbb C}^\times
  \text{continuous}\\
  d\gamma({\mathbb R}) &\colon {\mathfrak h}({\mathbb R}) \rightarrow
  {\mathbb C} \qquad \text{real linear}\\
  d\gamma &\colon {\mathfrak h} \rightarrow
  {\mathbb C} \qquad \text{complex linear}
 \end{aligned} \end{equation}
is called {\em weakly integral} if
\begin{equation}
  d\gamma(H_\alpha) \in {\mathbb Z} \qquad (\alpha \in R(G,H)).
\end{equation}
It is called {\em strongly integral} if it is integral, and also
\begin{equation}\label{eq:strInt}
  \gamma(m_\alpha) = (-1)^{d\gamma(H_\alpha)} \qquad (\alpha \in
  R(G,H)\ \text{real}).
\end{equation}

\end{subequations} 

\begin{proposition}\label{prop:fin} Suppose $G$ is a reductive algebraic group as in
  \eqref{se:realred}, and $H$ is a real $\theta$-stable maximal torus
  as in \eqref{se:realtori}.
  \begin{enumerate}
    \item Every irreducible finite-dimensional representation of
      $G({\mathbb R})$ remains irreducible on restriction to the
      identity component, and so defines an irreducible
      finite-dimensional complex representation of the complex
      reductive Lie algebra ${\mathfrak g}$.
      \item The $H$-weights of finite-dimensional representations of
        $G({\mathbb R})$ are precisely the strongly integral
        characters of $H({\mathbb R})$ (see \eqref{eq:strInt}).
      \item If $\gamma$ is a strongly integral character of
        $H({\mathbb R})$, then there is a finite-dimen\-sion\-al
        representation $F(\gamma)$ of $G({\mathbb R})$ having extremal weight
        $\gamma$.
      \item If $H=H_s$ is maximally split, then the representation
        $F(\gamma)$ is uniquely determined.
  \end{enumerate}
\end{proposition}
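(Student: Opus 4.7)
My plan is to transfer each statement to the complex algebraic group $G$ via analytic continuation, and then apply the classical Cartan--Weyl theory of highest weights.

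For (1), I would start from the standard fact that any continuous finite-dimensional complex representation of a Lie group is automatically real-analytic, so differentiating gives a representation of ${\mathfrak g}({\mathbb R})$ whose complex-linear extension is a representation of ${\mathfrak g}$. Because $G$ is a connected complex reductive algebraic group, this ${\mathfrak g}$-representation integrates to a holomorphic (in fact algebraic) representation of $G$ whose restriction to $G({\mathbb R})$ recovers the original. Zariski-density of $G({\mathbb R})$ in $G$ identifies $G({\mathbb R})$-invariant and $G$-invariant subspaces; combining this with the same density for the identity component $G({\mathbb R})_0$ (which is Zariski-dense in $G$ because it is dense in $G({\mathbb R})$), one sees that irreducibility for $G({\mathbb R})$, for $G$, for ${\mathfrak g}$, and for $G({\mathbb R})_0$ all coincide.

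For (2) and (3), the identification above reduces the weight question to algebraic characters of $H$. An algebraic character $\gamma$ of $H$ satisfies $\gamma(\alpha^\vee(z)) = z^{\langle\gamma,\alpha^\vee\rangle}$, which gives weak integrality via the integer exponent and, at $z=-1$, recovers exactly the sign compatibility \eqref{eq:strInt}. Conversely, I would extend a strongly integral character of $H({\mathbb R})$ to a unique algebraic character of $H$ by working with the decompositions $H({\mathbb R}) = T({\mathbb R}) \times A$ and $H = T \cdot H^{-\theta}$: weak integrality on $T({\mathbb R})$ forces the $T$-part to be algebraic; the $A$-part extends uniquely to the split algebraic torus $H^{-\theta}$, since $A$ is the Zariski-dense identity component of $H^{-\theta}({\mathbb R})$; and strong integrality is precisely the compatibility of these two extensions on the finite $2$-group $T \cap H^{-\theta}$. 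With $\gamma$ now an algebraic character, some Weyl-group conjugate places it into the dominant chamber for a positive system, and Cartan--Weyl produces the desired irreducible $F(\gamma)$ with $\gamma$ as an extremal weight.

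For (4), the task is to prove that two finite-dimensional irreducibles of $G({\mathbb R})$ sharing an extremal weight on $H_s$ must be isomorphic. Any two such differ at most by tensoring with a one-dimensional character of the finite component group $G({\mathbb R})/G({\mathbb R})_0$, so uniqueness is equivalent to showing that any such character trivial on $H_s({\mathbb R})$ is trivial on all of $G({\mathbb R})$. I would establish this by checking that $H_s({\mathbb R})$ meets every connected component of $G({\mathbb R})$, which is the key structural input for maximally split Cartans in linear real reductive groups; it follows from an Iwasawa-type decomposition of $G({\mathbb R})$ relative to a minimal parabolic containing $H_s({\mathbb R})$. This last structural fact is the main technical obstacle, and I would cite it from a standard reference rather than reproving it here.
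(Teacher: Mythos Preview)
The paper itself does not prove this proposition: it simply cites \cite{Vgreen}*{Section 0.4}. So there is no detailed argument to compare your sketch against, and your outline is broadly the right shape. That said, two of your steps contain genuine errors, both stemming from the fact that not every finite-dimensional representation of $G({\mathbb R})$ is the restriction of an algebraic representation of $G$.

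In your argument for (1), you assert that the ${\mathfrak g}$-module obtained by differentiation ``integrates to a holomorphic (in fact algebraic) representation of $G$ whose restriction to $G({\mathbb R})$ recovers the original.'' Neither clause is correct in general. If $G=GL(1)$ and $\pi(x)=|x|^s$ with $s\notin{\mathbb Z}$, the differential does not integrate to ${\mathbb C}^\times$; and even when integration is possible, the resulting $G$-representation agrees with $\pi$ only on $G({\mathbb R})_0$, differing on the other components by a character of $\pi_0(G({\mathbb R}))$. Your Zariski-density argument therefore only shows that $G({\mathbb R})_0$-invariant subspaces coincide with $G$-invariant subspaces for the \emph{algebraic} action, which says nothing directly about irreducibility for the \emph{original} $G({\mathbb R})$-action. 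The gap is repairable: one shows that any irreducible ${\mathfrak g}$-constituent $W$ of $V$ satisfies $W\cong W^{\Ad(g)}$ for every $g\in G({\mathbb R})$ (because $\Ad(g)$ is inner in $G$), so $V$ is $W$-isotypic; then, using the fact you invoke for (4) that $G({\mathbb R})=G({\mathbb R})_0\cdot H_s({\mathbb R})$, a one-dimensional $H_s({\mathbb R})$-eigenspace inside the highest weight space generates all of $V$ under ${\mathfrak g}$, forcing the multiplicity to be one.

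The same issue undermines your treatment of (2)--(3). You claim that ``the $A$-part extends uniquely to the split algebraic torus $H^{-\theta}$,'' but an arbitrary $\nu\in{\mathfrak a}^*_{\mathbb C}$ gives the character $a\mapsto a^\nu$ of $A$, which is algebraic on $H^{-\theta}$ only when $\nu$ lies in the character lattice. Strong integrality imposes no such condition when the root system does not span ${\mathfrak h}^*$ (again $GL(1)$ is the basic example). The fix is to separate the semisimple and central directions: on $[{\mathfrak g},{\mathfrak g}]$ your algebraic argument works, while the central part of $\gamma$ is handled by tensoring with a one-dimensional character of $G({\mathbb R})$. Your sketch of (4) is fine.
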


Most of this is proven in \cite{Vgreen}*{Section 0.4}.

\begin{corollary}\label{cor:fin} In the setting of Proposition
  \ref{prop:fin}, define
  $$G({\mathbb R})^{[H]} = G({\mathbb R})_0\cdot H({\mathbb R}),$$
  the subgroup of $G({\mathbb R})$ generated by the identity component
  and the fixed maximal torus. Put
  $$\pi_0^H(G({\mathbb R})) = G({\mathbb R})/G({\mathbb R})^{[H]},$$
  the quotient of the component group by the image of the component
  group of $H({\mathbb R})$. Define
  $$G({\mathbb R})^\sharp = \{g\in G({\mathbb R}) \mid \Ad(g) \in
  \Ad(G({\mathbb R})_0\} \supset G_0({\mathbb R}),$$
  $$\pi_0^\sharp(G({\mathbb R})) = G({\mathbb R})/G({\mathbb R})^\sharp.$$
  \begin{enumerate}
    \item Each group $G({\mathbb R})^{[H]}$ contains $G({\mathbb
      R})^\sharp$, with equality for the maximally compact Cartan
      $H=H_c$ of \eqref{eq:maxcpt}.
      \item Each group $\pi_0^H$ is a quotient of $\pi_0^\sharp$, which
        is a finite product of copies of ${\mathbb Z}/2{\mathbb Z}$.
     \item Suppose $\gamma$ is a strongly integral character of
       $H({\mathbb R})$. Then there is a simply transitive action of
       the character group of $\pi_0^H$ on the set of
       finite-dimensional irreducible representations of $G({\mathbb
         R})$ having extremal weight $\gamma$. The action is given by
       tensoring with the irreducible characters of $G({\mathbb
         R})/G({\mathbb R})^{[H]}$.
      \end{enumerate}
  \end{corollary}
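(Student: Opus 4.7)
The plan is to dispatch the three parts in order, with (i) doing the structural work and (ii)--(iii) being short consequences.

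For (i), observe that the kernel of $\Ad\colon G({\mathbb R}) \to \Aut({\mathfrak g}_{\mathbb R})$ is $Z(G)({\mathbb R})$, so $G({\mathbb R})^\sharp = G({\mathbb R})_0\cdot Z(G)({\mathbb R})$. Since $Z(G)$ lies in every maximal torus of $G$, $Z(G)({\mathbb R}) \subset H({\mathbb R})$, and the inclusion $G({\mathbb R})^\sharp \subset G({\mathbb R})^{[H]}$ follows at once. For the equality when $H = H_c$, the Lie-group decomposition $H_c({\mathbb R}) = T_c({\mathbb R}) \cdot A_c$, the connectedness of $A_c$, and the containment $T_c({\mathbb R})_0 \subset K({\mathbb R})_0 \subset G({\mathbb R})_0$ reduce matters to showing that every component of $T_c({\mathbb R})$ lies in $G({\mathbb R})_0\cdot Z(G)({\mathbb R})$; equivalently, that the image of $T_c({\mathbb R})$ in the real points of the adjoint quotient $G/Z(G)$ is contained in the identity component. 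This is the key structural input, specific to the fundamental Cartan subgroup, and is ultimately a consequence of the absence of real roots for $H_c$.

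For (ii), the quotient statement is immediate from (i), since $G({\mathbb R})^\sharp \subset G({\mathbb R})^{[H]}$ yields a surjection $\pi_0^\sharp \twoheadrightarrow \pi_0^H$. To identify $\pi_0^\sharp$ as a finite product of copies of ${\mathbb Z}/2{\mathbb Z}$, note that $\pi_0^\sharp$ is the component group of the image of $G({\mathbb R})$ in the real points of the adjoint group $G/Z(G)$, and for any real form of a connected complex reductive group the component group of the adjoint quotient is an elementary abelian $2$-group. This is standard, ultimately reflecting the fact that $H^1({\mathbb R}, Z)$ is $2$-torsion for any diagonalizable algebraic group $Z$ over ${\mathbb R}$.

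For (iii), existence of at least one irreducible $F(\gamma)$ with extremal weight $\gamma$ is Proposition \ref{prop:fin}(iii); what remains is the transitivity and freeness of the tensoring action. Given two such $\pi_1,\pi_2$, Proposition \ref{prop:fin}(i) says each restricts irreducibly to $G({\mathbb R})_0$, and highest-weight theory forces those restrictions to be isomorphic. The one-dimensional $\gamma$-weight space carries the character $\gamma$ of $H({\mathbb R})$ in both cases; a short Schur-type calculation then shows that any $G({\mathbb R})_0$-intertwiner can be normalized on the $\gamma$-weight space so as to also intertwine $H({\mathbb R})$, and hence all of $G({\mathbb R})^{[H]}$. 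Because $\pi_i|_{G({\mathbb R})^{[H]}}$ is already irreducible (every $G({\mathbb R})^{[H]}$-invariant subspace is $G({\mathbb R})_0$-invariant, hence $0$ or all), the usual Clifford-theoretic argument for a finite-index normal subgroup with abelian quotient $\pi_0^H$ (via (ii)) produces a character $\chi$ of $\pi_0^H$ with $\pi_2 \cong \pi_1\otimes\chi$; simple transitivity follows from one more application of Schur to any self-intertwiner between $\pi_1$ and $\pi_1\otimes\chi$.

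The main obstacle is the equality $G({\mathbb R})^{[H_c]} = G({\mathbb R})^\sharp$ in (i): this is precisely the non-formal input, and it fails for non-fundamental Cartans. The rest of the argument is routine structure theory together with the representation theory of a finite-index normal subgroup with irreducible restriction.
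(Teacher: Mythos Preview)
Your argument for the inclusion in (1) and for (3) matches the paper's exactly: write $g\in G(\mathbb R)^\sharp$ as $g_0\cdot z$ with $z\in Z(G)(\mathbb R)\subset H(\mathbb R)$, and deduce (3) from irreducibility on $G(\mathbb R)_0$ together with the obvious Clifford-theoretic bookkeeping (the paper simply says ``the rest of (3) is a formal consequence,'' and you have spelled that out correctly).

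Where you diverge is in (2) and in the equality case of (1). The paper does \emph{not} invoke the general fact that component groups of real points of connected reductive groups are elementary abelian $2$-groups; instead it postpones both facts and later proves them concretely via the restricted Weyl group. In Proposition~\ref{prop:semidirect}(6) the paper exhibits an explicit isomorphism
\[
G(\mathbb R)/G(\mathbb R)^\sharp \;\simeq\; K/K^\sharp \;\simeq\; W^{\text{sing}}_{\text{ncpt}}(K)\;\subset\;(\mathbb Z/2\mathbb Z)^r,
\]
where the last group is generated by commuting reflections in a type-$A_1^r$ system of noncompact imaginary roots orthogonal to $2\rho_K$. This simultaneously yields the $2$-torsion statement in (2) and, via $K^\sharp=K_0T_c$ and the Cartan decomposition, the equality $G(\mathbb R)^{[H_c]}=G(\mathbb R)^\sharp$ in (1). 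Your cohomological shortcut for (2) is valid and more conceptual, but it does not by itself give the equality in (1); your reduction of that equality to ``$\Ad(T_c(\mathbb R))\subset (G/Z(G))(\mathbb R)_0$'' is correct, yet the sentence ``ultimately a consequence of the absence of real roots for $H_c$'' is not a proof. The paper's route through $K^\sharp=K_0T_c$ is what actually closes that gap, and it buys the extra dividend of an explicit description of $\pi_0^\sharp$ used later in Corollary~\ref{cor:extrsig}(4).
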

\begin{proof} For (1), suppose $g\in G({\mathbb R})^\sharp$. Choose
  (according to the definition of $G({\mathbb R})^\sharp$) $g_0 \in
  G({\mathbb R})_0$ so that $\Ad(g) = \Ad(g_0)$. This means in
  particular that $g_0^{-1}g \in Z(G({\mathbb R})) \subset H({\mathbb
    R})$, which is the first assertion of (1). The last assertion we
  will address in Section \ref{sec:hwtform} after we have discussed
  restricted roots.

  The first assertion in (2) is an immediate consequence of (1). The
  second we will prove in Proposition \ref{prop:semidirect}(6) below.

For (3), Proposition \ref{prop:fin} guarantees that there is an
irreducible finite-di\-men\-sion\-al $F(\gamma)$ of extremal weight
$\gamma$, and that $F(\gamma)$ remains irreducible for $G({\mathbb
  R})_0$. The rest of (3) is a formal consequence.
  \end{proof}

We turn next to the calculation of Hermitian duals.

\begin{proposition}\label{prop:herm} Suppose again that $G$ is a
  reductive algebraic group as in
  \eqref{se:realred}, and $H$ is a real $\theta$-stable maximal torus
  as in \eqref{se:realtori}. We use the decomposition
  $$H({\mathbb R}) = T({\mathbb R}) \times A$$
  of \eqref{eq:maxtorB}. Write
  $$\begin{aligned}
    X^*(T)  &= \{\text{continuous characters}\ \lambda_{\mathbb R}\colon
    T({\mathbb R}) \rightarrow S^1\}\\
&\simeq \{\text{algebraic characters}\  \lambda\colon T
    \rightarrow {\mathbb C}^\times\}\\
&\simeq X^*(H)/(1-\theta)X^*(H)\end{aligned}$$
for the characters of the compact group $T({\mathbb R})$. We identify
characters of the vector group $A=\exp({\mathfrak a})$ with the complex
dual space
$${\mathfrak a}^*_{\mathbb C} = \Hom({\mathfrak a},{\mathbb C}),$$
sending $\nu \in {\mathfrak a}^*_{\mathbb C}$ to the character
$$\exp(X) \mapsto \exp(\nu(X)) \qquad (X\in {\mathfrak a})$$
or equivalently
$$a\mapsto a^\nu \qquad (a \in A).$$
\begin{enumerate}
\item Characters of $H({\mathbb R})$ may be indexed by pairs
$$\gamma = (\lambda,\nu) \in X^*(T) \times {\mathfrak a}^*_{\mathbb
  C}.$$
\item  The differential of such a character $\gamma$ is
  $$d\gamma = (d\lambda,\nu) \in i{\mathfrak t}({\mathbb R})^* \times
  {\mathfrak a}^*_{\mathbb C}.$$
\item The Hermitian dual of $\gamma$ is
  $$\gamma^h = (\lambda, -\overline\nu).$$
\item If $\gamma$ is strongly integral, then the Hermitian dual of a
  finite-dimensional representation $F(\gamma)$ (of extremal weight
  $\gamma$) is a finite-dimensional representation $F(\gamma^h)$ (of
  extremal weight $\gamma^h$).
  \item Suppose $\gamma^h$ is conjugate by $W$ to $\gamma$, so that
    $F(\gamma)^h$ also has extremal weight $\gamma$. If $\pi_0^H$ is
    trivial (see Corollary \ref{cor:fin}) (in
  particular, if $G({\mathbb R})$ is connected) then the (uniquely
  determined) $F(\gamma)$ must
  admit an invariant Hermitian form. If
  $\pi_0^H$ is not trivial, then either all or none of the
  $|\pi_0^H|$ choices for $F(\gamma)$ admits an invariant Hermitian form.
\item Suppose $H_s$ is maximally split as in \eqref{se:maxsplit}, and
  $(\lambda_s,\nu_s)$ is a strongly integral $R^+_s$-dominant
  weight. Then
    $$F(\lambda_s,\nu_s)^h = F(w_{0,s}\cdot\lambda_s,-w_{0,s}\cdot
  (\overline{\nu_s})).$$
  In particular, there is a nonzero invariant
  Hermitian form if and only if
    $$\nu_s = -w_{0,s}\cdot (\overline{\nu_s}), \quad
    w_{0,s}\cdot\lambda_s = \lambda_s.$$
\item Suppose $H_c$ is maximally compact as in \eqref{se:maxcpt}, and
  $(\lambda_c,\nu_c)$ is a strongly integral $R^+_c$-dominant
  weight. Then
    $$F(\lambda_c,\nu_c)^h = F(\lambda_c,-\overline{\nu_c}).$$
  In particular, there is a nonzero invariant
  Hermitian form only if
  $$\text{$\nu_c$ is purely imaginary.}$$
  \end{enumerate}
\end{proposition}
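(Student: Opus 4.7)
The plan is to prove the seven parts in order. Parts (1) and (2) are immediate from the direct product decomposition $H({\mathbb R}) = T({\mathbb R}) \times A$ of \eqref{eq:maxtorB}: characters of the compact torus $T({\mathbb R})$ are classified by $X^*(T)$ and have differentials in $i{\mathfrak t}({\mathbb R})^*$ (because they take values in $S^1$), while characters of the vector group $A = \exp({\mathfrak a})$ are identified with ${\mathfrak a}^*_{\mathbb C}$ via the exponential map. For part (3), the Hermitian dual of a character is $\gamma^h(h) = \overline{\gamma(h^{-1})}$. On $T({\mathbb R})$, $\overline{\lambda(t^{-1})} = \overline{\lambda(t)^{-1}} = \lambda(t)$ since $\lambda(t) \in S^1$; on $A$, a direct computation gives $\overline{a^{-\nu}} = a^{-\overline{\nu}}$. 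Assembling these yields $\gamma^h = (\lambda, -\overline{\nu})$.

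For part (4), the Hermitian dual representation $F(\gamma)^h$ has $H({\mathbb R})$-weights obtained by applying the Hermitian dual operation to the weights of $F(\gamma)$: a weight vector $v_\alpha \in V$ of weight $\alpha$ gives rise to a conjugate-linear functional that transforms under $h \in H({\mathbb R})$ by $\alpha^h$. Since $\alpha \mapsto \alpha^h$ is an involution of the character group, extremal weights go to extremal weights; hence $F(\gamma)^h$ has $\gamma^h$ as an extremal weight, and by Proposition \ref{prop:fin}(3) is one of the $F(\gamma^h)$. Part (5) then follows from Corollary \ref{cor:fin}: if $\pi_0^H$ is trivial the finite-dimensional irreducible of extremal weight $\gamma$ is unique, so the hypothesis that $\gamma^h$ is $W$-conjugate to $\gamma$ yields $F(\gamma)^h \cong F(\gamma)$, and Schur's lemma promotes any such isomorphism to an invariant Hermitian form. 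In the general case, the $|\pi_0^H|$ choices for $F(\gamma)$ form a torsor under tensoring by characters of $\pi_0^H$; these characters are $\pm 1$-valued by Corollary \ref{cor:fin}(2), hence self-Hermitian-dual, giving $(F \otimes \chi)^h \cong F^h \otimes \chi$. Existence of an invariant Hermitian form therefore holds for all or none of the $|\pi_0^H|$ choices.

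Parts (6) and (7) are specializations. For (6), $H_s$ is maximally split, so Proposition \ref{prop:fin}(4) gives uniqueness of $F(\gamma)$ from its extremal weight and thus $F(\gamma^h) = F(w \cdot \gamma^h)$ for every $w \in W$. Choosing $w = w_{0,s}$ is motivated by the defining property $w_{0,s}(R^+_s) = \theta R^+_s$ together with the commutation of $w_{0,s}$ with $\theta$: combined with the formula $\gamma^h = (\lambda_s, -\overline{\nu_s})$, these ensure that $w_{0,s} \cdot \gamma^h$ is again $R^+_s$-dominant, and comparing dominant representatives yields the stated existence criterion. For part (7), the analog of $w_{0,s}$ for $R^+_c$ is the identity (since $R^+_c$ itself is $\theta$-stable), so an invariant Hermitian form can exist only if $(\lambda_c, \nu_c) = (\lambda_c, -\overline{\nu_c})$, giving $\nu_c$ purely imaginary; this remains only a necessary condition because $H_c$ is not maximally split and $F(\lambda_c, \nu_c)$ is not uniquely determined. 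The main obstacle will be verifying in (6) that $w_{0,s} \cdot \gamma^h$ is $R^+_s$-dominant whenever $\gamma$ is, which amounts to carefully tracking how the Cartan involution, complex conjugation, and the Weyl group interact on $X^*(T) \times {\mathfrak a}^*_{\mathbb C}$.
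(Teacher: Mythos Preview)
Your proposal is correct and follows essentially the same route as the paper's proof sketch: parts (1)--(3) are immediate, (4) comes from tracking weights under Hermitian dual, and (6)--(7) reduce to identifying the $R^+$-dominant representative of $\gamma^h$ via the element $w_{0,s}$ (which is the identity for $H_c$). Your treatment of (5) is actually more complete than the paper's sketch, which does not spell out the ``all or none'' step; your observation that the characters of $\pi_0^H$ are $\pm 1$-valued and hence self-Hermitian-dual, so that $(F\otimes\chi)^h \cong F^h\otimes\chi$, is exactly the missing ingredient.
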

Describing sufficient conditions for the existence of a form using the
maximally compact Cartan $H_c$ is complicated; we will address this in
Corollary \ref{cor:extrsig}.
\begin{proof_sketch} The identification of algebraic characters of $T$
  with continuous characters of the compact real form $T({\mathbb R})$
  is a feature of any reductive algebraic group. Parts (1)--(3) are
  immediate. The Hermitian dual of a direct sum is the direct sum of
  the Hermitian duals, so (4) follows. In case $H=H_s$, the extremal
  weight $(\lambda_s,-\overline{\nu_s})$ is evidently dominant for the
  positive system $w_{0,s}\cdot R^+(G,H_s)$, and (5) follows.

  For (6), the only difficulty is that $F(\lambda_c,\nu_c)$ is not
  unique: we only know that $F(\lambda_c,\nu_c)^h$ is {\em some}
  representation of highest weight
  $(\lambda_c,-\overline{\nu_c})$. This proves the necessity of the
  condition in (6) (for existence of an invariant form). The
  hypothesis for the last assertion in (6) amounts to
  $$G({\mathbb R}) = G({\mathbb R})_0 T_c({\mathbb R}),$$
  which implies that the representation $F(\lambda_c,\nu_c)$ is
  unique.
\end{proof_sketch}

\section{Restricted Weyl group}
\label{sec:resweyl}
\setcounter{equation}{0}
Our goal is to study invariant Hermitian forms on extremal weight
spaces with respect to a maximally compact Cartan
\begin{subequations}\label{se:resweyl}
$$H_c = T_c({\mathbb R})\cdot A_c$$
as in \eqref{se:maxcpt}. In order to do that, we first need to
understand how the roots and Weyl group of $H_c$ restrict to $T_c$;
that is the subject of this section. Fix a $\theta$-stable system of positive
roots
$$R_c^+ \subset R(G,H_c).$$

Write
\begin{equation}
  W = W(G,H_c) = W(R(G,H_c)) \subset \Aut(H_c)
\end{equation}
for the Weyl group of $H_c$ in $G$. We are interested in several
subgroups of $W$, including
\begin{equation}\label{eq:Wc}\begin{aligned}
  W^\theta &= \text{centralizer of $\theta$ in $W$}\\
W_{\text{\textnormal{imag}}} &=
W(R_{c,\text{\textnormal{imag}}}) \subset W^\theta \qquad \text{Weyl
  group of imaginary roots} \\
W_K &= N_K(H)/(K\cap H) \simeq N_K(T)/T \subset W^\theta \\
W_{K_0} &= N_{K_0}(T_0)/T_0 \subset W_K
\qquad \text{compact Weyl group}\\
\end{aligned}\end{equation}
The reason we do not call $W_K$ the ``compact Weyl group'' is that it
need not be the Weyl group of a root system.

The first important fact about the maximally compact Cartan is that
{\em no root is trivial on $T_{c,0}$}. The reason is that (for any
$\theta$-stable real Cartan) the roots vanishing on ${\mathfrak t}$
are exactly the real roots (see \eqref{eq:real}); and on $H_c$ there
are no real roots (see \eqref{se:maxcpt}). We can therefore introduce
the {\em restricted roots}
\begin{equation}\label{eq:res}\begin{aligned}
  R_{\text{\textnormal{res}}}(G,T_{c,0}) &= \{\overline{\alpha} =
  \alpha|_{T_{c,0}} \mid \alpha \in R(G,H_c)\}\\ &\subset X^*(T_0) =
  X^*(H_c)/X^*(H_c)^{-\theta}.
\end{aligned}\end{equation}
The dual lattice to $X^*(H_c)/X^*(H_c)^{-\theta}$ is
\begin{equation}
  X_*(T_{c,0}) = X_*(H_c)^\theta
\end{equation}
The {\em restricted coroots} are by definition
\begin{equation}\label{eq:restypes}
  \overline{\alpha}^\vee = \begin{cases} \alpha^\vee & \text{$\alpha =
    \theta\alpha$ imaginary}\\
    \alpha^\vee+ \theta\alpha^\vee & \text{$\alpha$ complex,
      $\alpha+\theta\alpha$ not a root}\\
    2(\alpha^\vee + \theta\alpha^\vee) & \text{$\alpha$ complex,
      $\alpha+\theta\alpha$ a root.}\\
  \end{cases}
\end{equation}
\end{subequations} 

\begin{proposition}\label{prop:resroots}
  The restricted roots and coroots form a root datum
  $${\mathcal R}_{\text{\textnormal{res}}} =
  (X^*(T_{c,0}),R_{\text{\textnormal{res}}},
  X_*(T_{c,0}),R^\vee_{\text{\textnormal{res}}})$$
in the torus $T_{c,0}$. This root datum is not reduced when the third
case for coroots arises. Restriction to $T_{c,0}$ defines an isomorphism
$$W^\theta|_{T_{c,0}} = W({\mathcal R}_{\text{\textnormal{res}}}).$$
Inside this root datum are several smaller root data.
\begin{enumerate}
  \item The {\em reduced restricted root datum}, written ${\mathcal
    R}_{\text{\textnormal{res,red}}}$, consisting of the restricted
    roots $\overline\alpha$ so that $2\overline\alpha$ is not a
    restricted root; equivalently, those falling in cases (1) and (2)
    of \eqref{eq:restypes}. This subsystem is preserved by $W^\theta$,
    and has the same Weyl group:
    $$W({\mathcal R}_{\text{\textnormal{res,red}}}) = W({\mathcal
      R}_{\text{\textnormal{res}}}).$$
      \item The {\em complex subsystem}, written ${\mathcal
    R}_{\text{\textnormal{res,cplx}}}$, consisting of the restrictions
    to $T_{c,0}$ of the complex roots and the corresponding
    coroots. This subsystem is preserved by the action of $W^\theta$,
    and so defines a normal subgroup
    $$W_{\text{\textnormal{cplx}}} \lhd W^\theta.$$
    \item The {\em imaginary subsystem}, written ${\mathcal
    R}_{\text{\textnormal{res,imag}}}$, consisting of the restrictions
    to $T_{c,0}$ of the imaginary roots and the corresponding
    coroots. This subsystem is preserved by the action of
    $W^\theta$, and so defines a normal subgroup
   $$W_{\text{\textnormal{imag}}} \lhd W^\theta.$$
    The imaginary roots have a ${\mathbb Z}/2{\mathbb Z}$
    grading in which the compact imaginary roots are even and the
    noncompact imaginary roots are odd (cf. \eqref{eq:imag}). This
    grading is respected by $W_{\text{\textnormal{cplx}}}$, but not
    usually by $W_{\text{\textnormal{imag}}}$.
 \item The {\em root datum for $K$}, written ${\mathcal R}_K$. Its roots are
   the disjoint union of the complex roots and the compact imaginary roots:
$${\mathcal R}_K = {\mathcal R}_{\text{\textnormal{cplx}}}
   \amalg {\mathcal R}_{\text{\textnormal{imag,cpt}}}.$$
\end{enumerate}
\end{proposition}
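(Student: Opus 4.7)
My plan is to handle the proposition in four steps: identify the restriction types, verify the root-datum axioms, prove the Weyl-group isomorphism (the main obstacle), and dispatch the four subsystem claims.

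Since $H_c$ is maximally compact there are no real roots (cf.~\eqref{se:maxcpt}), so every restriction $\overline\alpha \in X^*(T_{c,0})$ is nonzero. Because $\theta$ acts as the identity on $T_{c,0}$, the restriction $\overline\alpha$ depends only on the $\theta$-orbit of $\alpha$. A $\theta$-fixed orbit is imaginary (case (1) of \eqref{eq:restypes}); a free orbit $\{\alpha,\theta\alpha\}$ with $\alpha\ne -\theta\alpha$ gives case (2) if $\alpha+\theta\alpha$ is not a root (so $\alpha$ and $\theta\alpha$ span an $A_1\times A_1$ and are orthogonal) and case (3) if it is (so they span an $A_2$, and the $\theta$-fixed root $\alpha+\theta\alpha$ restricts to $2\overline\alpha$, which is what makes $\mathcal R_{\text{res}}$ non-reduced). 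A standard rank-two analysis rules out any other possibility and pins down the coroot normalizations in \eqref{eq:restypes}.

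Next I verify the root-datum axioms. Integrality of the pairings $\langle\overline\beta,\overline\alpha^\vee\rangle$ is immediate from \eqref{eq:restypes}, since $\overline\alpha^\vee$ is an integer combination of $\alpha^\vee$ and $\theta\alpha^\vee$. For closure under $s_{\overline\alpha}$, I lift $s_{\overline\alpha}$ to an element of $W^\theta\subset W(G,H_c)$: use $s_\alpha$ in the imaginary case; the commuting product $s_\alpha s_{\theta\alpha}$ in case (2); and the reflection $s_{\alpha+\theta\alpha}$ in case (3) (a reflection in a $\theta$-fixed, hence imaginary, root). Each lift permutes $R(G,H_c)$ and commutes with $\theta$, so its restriction permutes $R_{\text{res}}$, and the coroot normalizations are chosen exactly so that the restriction acts as $s_{\overline\alpha}$. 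This lifting also gives surjectivity of the restriction homomorphism $W^\theta\to W(\mathcal R_{\text{res}})$. The main obstacle is injectivity: if $w\in W^\theta$ acts trivially on $T_{c,0}$, I would fix a $\theta$-stable positive system $R_c^+$ and note that its open dominant chamber $C\subset X^*(H_c)\otimes\mathbb R$ is $\theta$-stable and meets the $\theta$-fixed subspace in the dominant chamber for $R_{\text{res}}^+$. Then $wC$ is another $\theta$-stable chamber whose intersection with the $\theta$-fixed subspace coincides with that of $C$; a sign-change induction on $|R_c^+|$ using $\theta$-stable simple reflections then forces $wC=C$, hence $w=1$.

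For (1)--(4), each subsystem is visibly stable under $W^\theta$, which preserves $\theta$-orbits of roots and thus the complex/imaginary distinction, so the normality assertions for $W_{\text{cplx}}$ and $W_{\text{imag}}$ follow at once. The equality $W(\mathcal R_{\text{res,red}})=W(\mathcal R_{\text{res}})$ is immediate from $s_{\overline\alpha}=s_{2\overline\alpha}$. The $\mathbb Z/2\mathbb Z$-grading on imaginary roots is preserved by $W_{\text{cplx}}$ because its generators $s_\alpha s_{\theta\alpha}$ act by conjugation inside $G$ and so preserve the real form of each imaginary $\mathfrak{sl}(2)$-triple; it need not be preserved by $W_{\text{imag}}$, because $s_\beta$ for $\beta$ noncompact imaginary can switch the compactness type of a neighboring imaginary root. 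The identification of $\mathcal R_K$ in (4) comes down to the observation that $\overline\alpha^\vee\in\mathfrak k$, equivalently $\alpha^\vee$ is $\theta$-fixed modulo its $\theta$-translate, precisely for complex roots (whose symmetrized coroot is $\theta$-fixed by construction) and for compact imaginary roots, matching the root data of $K$.
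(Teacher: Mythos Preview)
Your outline is close to the paper's, and the lifting of $s_{\overline\alpha}$ to $s_\alpha$, $s_\alpha s_{\theta\alpha}$, or $s_{\alpha+\theta\alpha}$ is exactly what the paper does. But there is a genuine gap in the Weyl-group isomorphism. You write of ``the restriction homomorphism $W^\theta\to W(\mathcal R_{\text{res}})$'' and then prove it is surjective and injective. The problem is that you have not shown the image of restriction lies in $W(\mathcal R_{\text{res}})$ at all; a priori $W^\theta|_{T_{c,0}}$ could be a larger subgroup of $GL(X^*(T_{c,0}))$ permuting the restricted roots (for instance, it could include diagram automorphisms). Your lifting argument only gives the containment $W(\mathcal R_{\text{res}})\subseteq W^\theta|_{T_{c,0}}$. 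The reverse containment is the nontrivial structural fact that $W^\theta$ is \emph{generated} by the three types of lifts you wrote down; the paper attributes this to Knapp and cites \cite{IC4}*{Proposition 3.12}. Without it your isomorphism is incomplete. (Your injectivity argument via chambers is fine, though more elaborate than needed: the paper just observes that $\mathfrak t_c$ contains regular elements, so no $w\in W$ can fix it pointwise. Your claim that $C$ meets the $\theta$-fixed subspace is exactly this regularity statement in disguise, and once you have a common interior point of $C$ and $wC$ you are done---no induction is required.)

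Two smaller issues. Your criterion for $\mathcal R_K$ in (4) is not right: you say the restricted root is a root of $K$ when ``$\overline\alpha^\vee\in\mathfrak k$,'' but every restricted coroot lies in $X_*(T_{c,0})=X_*(H_c)^\theta\subset\mathfrak t_c\subset\mathfrak k$ by definition, including those for noncompact imaginary $\alpha$. The correct argument (and the paper's) looks at root \emph{vectors}: compact imaginary roots have $\mathfrak g_\alpha\subset\mathfrak k$, and a complex pair contributes $X_\alpha+\theta X_\alpha\in\mathfrak k$ as a $\overline\alpha$-root vector. Second, your justification that $W_{\text{cplx}}$ preserves the compact/noncompact grading---that its generators ``act by conjugation inside $G$''---is not enough, since every Weyl group element does that. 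What is needed is that the representatives can be taken in $K$, which follows precisely because the complex restricted roots are roots of $K$ (your part (4)); the paper makes this dependency explicit.
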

\begin{proof_sketch}
That the restricted roots are a root system is classical. The term
``restricted roots'' most often refers to restriction to the split
part of a maximally split torus. The fact those restrictions
constitute a root system is proved in \cite{Helgason}*{Section
  VII.2}. Helgason's arguments can be applied (with substantial
simplifications) to show that ${\mathcal R}_{\text{\textnormal{res}}}$
is a root datum.

Another classical fact is that ${\mathfrak t}_c$ contains regular
elements, so that no element of $W$ can fix all elements of
${\mathfrak t}_c$. This proves that restriction to $T_{c,0}$ is an
injective group homomorphism on $W^\theta$. That the image contains
$W({\mathcal R}_{\text{\textnormal{res}}})$ follows from
$$s_{\overline\alpha} = \begin{cases} s_\alpha|_{T_{c,0}} &
  \text{$\alpha$ imaginary}\\
  (s_\alpha s_{\theta\alpha})|_{T_{c,0}} & \text{$\alpha$ complex,
    $\alpha+\theta\alpha$ not a root}\\
  s_{\alpha+\theta\alpha}|_{T_{c,0}} & \text{$\alpha +\theta\alpha$ a
    root.}
  \end{cases}$$
  (Only the second assertion requires thought, and it is very easy.)

That $W^\theta$ is generated by elements of these three kinds is due
perhaps to Knapp; a proof may be found in \cite{IC4}*{Proposition
  3.12}.

For (1), the complex roots are precisely those having a non-trivial
restriction to the $-1$ eigenspace ${\mathfrak a}$ of $\theta$. That
$W^\theta$ preserves these roots is obvious. In particular, the
reflections in complex restricted roots preserve complex roots. This
last fact is the main part of the proof that the complex roots are a
root datum.

Part (2) is exactly parallel, except that this time the condition is
trivial restriction to ${\mathfrak a}$. The grading was already
explained after \eqref{eq:imag}; that it is preserved by compact
imaginary reflections is clear. We postpone for a moment the
assertion that it is preserved by complex reflections.

Because $K_0$ is a reductive algebraic group with maximal torus
$T_{c,0}$, we have the root datum of $K_0$ in $X^*(T_{c,0})$ and
$X_*(T_{c,0})$. Evidently this includes the compact imaginary
roots. From each complex root $\beta$ with root vector $X_\beta$ we
get a root vector
$$X_\beta + \theta X_\beta \in {\mathfrak k}$$
for $\overline \beta$; so the complex roots are automatically roots
for $K_0$. This proves (3).

Because the complex root reflections have representatives in $K$, they
must preserve the compact/noncompact grading on the imaginary
roots. This completes the proof of (2). \end{proof_sketch}

Recall that we have fixed a $\theta$-stable set of positive roots
$R^+_c$; this defines automatically a positive root system
$R^+_{\text{\textnormal{res}}}$ for the restricted roots, and also for the
complex, imaginary, and compact roots. Write $\Gamma_c$ for the Dynkin
diagram of $R^+_c$ (a graph with a vertex for each simple $\alpha$ and
an edge labelled $r$ from $\alpha$ to $\beta$ whenever $\alpha+r\beta$
is a root). Then $\theta$ defines an automorphism of $\Gamma_c$. The
Dynkin diagram $\Gamma_{\text{\textnormal{res}}}$ for the restricted
roots has as vertex set the orbits of $\theta$ on $\Gamma_c$. A fixed
point on $\Gamma_c$ corresponds to an imaginary simple root in
$\Gamma_{\text{\textnormal{res}}}$; an orbit consisting of two
non-adjacent simple roots $\alpha$ and $\theta(\alpha)$ corresponds to
a complex simple root $\overline\alpha$ in the second case of
\eqref{eq:restypes}; and an orbit consisting of two
adjacent simple roots $\alpha$ and $\theta(\alpha)$ corresponds to a
non-reduced complex simple root $\overline\alpha$ in the third case of
\eqref{eq:restypes}. (Such a vertex $\overline\alpha$ is joined to itself
in the restricted Dynkin diagram $\Gamma_{\text{\textnormal{res}}}$
since $2\overline\alpha$ is the (imaginary) root
$\alpha+\theta(\alpha)$. See for example the top right diagram in
Table \ref{tab:A4red}.)

The reduced restricted roots are the restrictions of roots which
involve either both or neither of a pair $(\alpha,\theta\alpha)$ of
adjacent simple roots. Such roots in $R$ are themselves a
$\theta$-stable subsystem $R_{\text{\textnormal red}}$. The simple
roots of $R_{\text{\textnormal red}}$ are those of $R$, except that
each adjacent complex pair $(\alpha,\theta\alpha)$ is replaced by the
single imaginary simple root $\alpha+\theta\alpha$.

This process is illustrated for $SL(5,{\mathbb R})$ in Table
\ref{tab:A4red}. The Dynkin diagram in the upper left is for $R$,
showing the action of $\theta$ reversing the line. The diagram on the
upper right is for the restricted roots, obtained by folding the
diagram on the left in half. The diagram on the lower left eliminates
the complex roots for which $2\overline{\alpha}$ is a root, by
replacing the two middle roots by their sum. On the lower right are
the restricted reduced roots: the complex restricted root
$\overline\alpha$ has been replaced by an imaginary (restricted) root
$\alpha +\theta\alpha$.  In each diagram imaginary vertices are
indicated with a filled circle, and complex vertices with an empty circle.

\begin{table}
\setlength{\unitlength}{1cm}
\qquad \qquad \qquad \begin{tabular}{cccc}
$R$&
\begin{picture}(2.8,1.1)(.4,.5)
  \put(1.0,.6){\LARGE{$\curvearrowleftright$}}
  \put(1.2,.9){$\theta$}
\multiput(.6,.5)(.5,0){3}{\line(1,0){.3}}
\multiput(.5,.5)(.5,0){4}{\circle{.15}}
\end{picture}
& $R_{\text{\textnormal{res}}}$&
\begin{picture}(1.0,.6)(.4,.3)
\multiput(.2,.5)(.7,0){2}{\circle{.15}} 
\multiput(.3,.45)(0,.1){2}{\line(1,0){.5}} 
\put(.43,.38){\Large{$>$}}
\put(1.2,.4){\small\begin{rotate}{90}$\circlearrowleft$\end{rotate}}
\end{picture}
\\
$R_{\text{\textnormal{red}}}$&
\begin{picture}(2.8,1.1)(.4,.5)
  \put(1.04,.6){\LARGE{$\curvearrowleftright$}}
  \put(1.24,.9){$\theta$}
\multiput(.9,.5)(.5,0){2}{\line(1,0){.3}}
\multiput(.8,.5)(1.0,0){2}{\circle{.15}} 
\put(1.3,.5){\circle*{.15}} 
\end{picture}
& $R_{\text{\textnormal{res,red}}}$&
\begin{picture}(1.0,.6)(.2,.3)
  \put(.2,.5){\circle{.15}} 
  \put(1.0,.5){\circle*{.15}} 
\multiput(.3,.45)(0,.1){2}{\line(1,0){.6}} 
\put(.43,.38){\Large{$<$}}
\end{picture}

\end{tabular}
\caption{Restricted and reduced roots for $SL(5,R)$}
\label{tab:A4red} 
\end{table} 

We offer one more example, the restricted root system for the split
real form of $E_6$. In this case the Cartan involution $\theta$
interchanges the long legs of the Dynkin diagram. There are no
adjacent pairs $(\alpha,\theta\alpha)$, so the restricted root system
is already reduced. Its Dynkin diagram is obtained by folding together
the long legs of the $E_6$ diagram, obtaining a diagram of type
$F_4$. Again imaginary vertices are illustrated with a filled circle,
and complex vertices with an empty circle.
\begin{table}
\setlength{\unitlength}{1cm}
\qquad \qquad \qquad \quad \begin{tabular}{cccc}
  $R$ 
  &\quad\begin{picture}(2.5,1)(.4,.8)
\multiput(1.5,.8)(0,.5){2}{\circle*{.15}}
\multiput(.6,.8)(.5,0){4}{\line(1,0){.3}}
\multiput(.5,.8)(.5,0){2}{\circle{.15}}
\multiput(2,.8)(.5,0){2}{\circle{.15}}
\put(1.5,.9){\line(0,1){.3}}
\put(1.2,.45){\LARGE{$\curvearrowbotleftright$}}
\put(1.4,.15){$\theta$}
\end{picture}

  & \qquad $R_{\text{\textnormal{res}}}$ 
  & \quad \begin{picture}(.6,2.2)(.4,1.2)
      \multiput(.5,1.9)(0,.6){2}{\circle*{.15}}
      \multiput(.5,.7)(0,.6){2}{\circle{.15}}
      \put(.5,2){\line(0,1){.4}}
      \put(.5,.8){\line(0,1){.4}}
      \multiput(.47,1.4)(.06,0){2}{\line(0,1){.4}} 
      \put(.611,1.45){\large\begin{rotate}{90}$<$\end{rotate}}
\end{picture}\\[.7cm]
\end{tabular}
\caption{Restricted roots for the split real form of $E_6$}
\label{tab:E6res} 
\end{table} 

\begin{subequations}\label{se:semidirect}

Returning to general $G$, define
\begin{equation}\label{eq:rhos}\begin{aligned}
2\rho_{\text{\textnormal{cplx}}} &= \sum_{\alpha \in
  R^+_{\text{\textnormal{cplx}}}} \alpha \in X^*(T_{c,0})\\
2\rho_{\text{\textnormal{imag}}} &= \sum_{\beta\in
  R^+_{\text{\textnormal{imag}}}} \beta \in X^*(T_{c,0})\\
2\rho_K &= \sum_{\gamma\in R^+_K} \gamma \in X^*(T_{c,0}).
\end{aligned}\end{equation}
Define the {\em singular imaginary roots} by
\begin{equation}
  R^{\text{\textnormal{sing}}}_{\text{\textnormal{imag}}} = \{\delta \in
R_{\text{\textnormal{res}}} \mid \langle
2\rho_{\text{\textnormal{cplx}}},\delta^\vee \rangle = 0\},
\end{equation}
the {\em singular complex roots} by
\begin{equation}
  R^{\text{\textnormal{sing}}}_{\text{\textnormal{cplx}}} = \{\delta \in
R_{\text{\textnormal{res}}} \mid \langle
2\rho_{\text{\textnormal{imag}}},\delta^\vee \rangle = 0\},
\end{equation}
and the {\em singular noncompact roots} by
\begin{equation}
  R^{\text{\textnormal{sing}}}_{\text{\textnormal{ncpt}}} = \{\delta \in
R_{\text{\textnormal{res}}} \mid \langle
2\rho_K,\delta^\vee \rangle = 0\}.
\end{equation}
\end{subequations}

Using these root systems, we can begin to understand the restricted
Weyl group $W^\theta = W_{\text{\textnormal{res}}}$.
\begin{proposition}\label{prop:semidirect}
  We use the notation of Proposition \ref{prop:resroots}.
\begin{enumerate}
  \item The weight $2\rho_{\text{\textnormal{cplx}}}$ is dominant for
    $R^+_{\text{\textnormal{res}}}$ and regular for the complex
    roots. Therefore the singular imaginary roots form a Levi
    subsystem in $R_{\text{\textnormal{res}}}$, consisting entirely of
    imaginary roots. We get a semidirect product decomposition
 $$W^\theta = W_{\text{\textnormal{cplx}}} \rtimes
    W^{\text{\textnormal{sing}}}_{\text{\textnormal{imag}}}.$$
\item The weight $2\rho_{\text{\textnormal{imag}}}$ is dominant for
    $R^+_{\text{\textnormal{res}}}$ and regular for the imaginary
    roots. Therefore the singular complex roots form a Levi
    subsystem in $R_{\text{\textnormal{res}}}$, consisting entirely of
    complex roots. We get a semidirect product decomposition
 $$W^\theta = W^{\text{\textnormal{sing}}}_{\text{\textnormal{cplx}}} \ltimes
    W_{\text{\textnormal{imag}}} .$$
\end{enumerate}
For the last items, we modify our $\theta$-stable choice of positive
roots $R^+_c$ to a new choice
$$R^{+,K}_c \quad\text{making $2\rho_K$ dominant}.$$

\begin{enumerate}[resume]
\item The weight $2\rho_K$ is dominant for
  $R^{+,K}_{\text{\textnormal{res}}}$ and regular for the roots of
  $K$. Therefore the singular noncompact roots form a Levi subsystem
  with respect to $R^{+,K}_{\text{\textnormal{res}}}$ consisting
  entirely of noncompact imaginary roots
  $$\{\pm\beta_1,\ldots,\pm\beta_r\}.$$
  This root system is of type $A_1^r$, so has Weyl group
  $$W^{\text{\textnormal{sing}}}_{\text{\textnormal{ncpt}}} =
  ({\mathbb Z}/2{\mathbb Z})^r.$$
\end{enumerate}

For $1\le j\le r$, choose a root $SL(2)$
  $$\phi_{\beta_j} \colon SU(1,1) \rightarrow G({\mathbb R}), \qquad
\phi_{\beta_j}\left(\Ad\begin{pmatrix} i &
0\\ 0&-i\end{pmatrix}(g)\right) = \theta(\phi_{\beta_j}(g))$$
as in \eqref{eq:imag}. Define
$$\sigma_j = \phi_{\beta_j}\begin{pmatrix} 0 & 1\\-1 &
    0\end{pmatrix},$$
a representative in $N_{G}(H_c)$ for the simple
reflection $s_{\beta_j}$, and
$$m_j = \phi_{\beta_j}\begin{pmatrix} -1 & 0\\0 & -1 \end{pmatrix} \in
T_{c,0}({\mathbb R}) \subset K({\mathbb R}).$$
For $B \subset \{1,\ldots,r\}$, define
$$H_B = \sum_{i\in B} \beta_j^\vee \in X_*(H_c), \qquad \sigma_B =
\prod_{j\in B} \sigma_j$$
$$s_B = \prod_{j\in B} s_j \in
W^{\text{\textnormal{sing}}}_{\text{\textnormal{ncpt}}} \qquad m_B = \prod_{j\in B} m_j
=\exp(2\pi i H_B/2) = \sigma_B^2.$$
Then
$$\theta(\sigma_B) = \sigma_B^{-1} = m_B\sigma_B.$$

\begin{enumerate}[resume]
  \item The Weyl group element $s_B$ admits a representative in $K$ if
    and only if there is a coweight $\ell_B \in X_*(H_c)$ satisfying
    $$\ell_B + \theta(\ell_B) = H_B.$$
    In this case the representative may be taken to be
    $$\widetilde{s}_B = \exp(\pi i \ell_B)\sigma_B.$$
\item Define
  $$
  W^{\text{\textnormal{sing}}}_{\text{\textnormal{ncpt}}}(K) = \{s_B \in
({\mathbb Z}/2{\mathbb Z})^r \mid H_B \in (1+\theta)X_*(H_c)\} \qquad (B \subset
  \{1,\ldots,r\}).$$
  Then there is a semidirect product decomposition
  $$W_K = W_{K_0} \rtimes
  W^{\text{\textnormal{sing}}}_{\text{\textnormal{ncpt}}}(K),$$
  the second factor being an abelian group with every element of order
  one or two. 
\item Define
  $$K^\sharp = \{k\in K \mid \Ad_G(k) \in \Ad_G(K_0)\} = K_0T_c$$
  (cf. Corollary \ref{cor:fin}; $K^\sharp({\mathbb R}) = G({\mathbb
  R})^\sharp \cap K$). Then 
  $$G({\mathbb R})/G({\mathbb R})^\sharp \simeq K({\mathbb
  R})/K^\sharp({\mathbb R})\simeq K/ K^\sharp\simeq
  W^{\text{\textnormal{sing}}}_{\text{\textnormal{ncpt}}}(K).$$
\end{enumerate}
\end{proposition}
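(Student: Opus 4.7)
The plan is to treat parts (1)--(3) as three instances of the general principle that the stabilizer of a dominant weight is a standard Levi subgroup, handle (4) by a direct computation inside each noncompact imaginary $SL(2)$, and deduce (5) and (6) from this together with bookkeeping about the Cartan decomposition. For (1), I first observe that every element of $W_{\text{\textnormal{imag}}}$ acts trivially on ${\mathfrak a}_c$, since imaginary roots vanish on ${\mathfrak a}_c$ by \eqref{eq:imag}; therefore a generic element of ${\mathfrak a}_c$ picks out a $W_{\text{\textnormal{imag}}}$-invariant positive system on the complex restricted roots, forcing $W_{\text{\textnormal{imag}}}$ to fix $2\rho_{\text{\textnormal{cplx}}}$. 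Consequently $\langle 2\rho_{\text{\textnormal{cplx}}}, \beta^\vee\rangle = 0$ for every imaginary $\beta$, whereas inside the complex subsystem $2\rho_{\text{\textnormal{cplx}}}$ is the usual sum of positive roots and so is regular. This simultaneously gives dominance of $2\rho_{\text{\textnormal{cplx}}}$ for $R^+_{\text{\textnormal{res}}}$ and identifies the singular subsystem as all of $R_{\text{\textnormal{imag}}}$. Combining the resulting Levi equality $W^\theta_{2\rho_{\text{\textnormal{cplx}}}} = W_{\text{\textnormal{imag}}}$ with normality of $W_{\text{\textnormal{cplx}}}$ (Proposition \ref{prop:resroots}(2)) and trivial intersection (since $W_{\text{\textnormal{cplx}}}$ acts faithfully on ${\mathfrak a}_c$ while $W_{\text{\textnormal{imag}}}$ fixes ${\mathfrak a}_c$ pointwise), an orbit-stabilizer count delivers the semidirect product; the symmetric argument with $2\rho_{\text{\textnormal{imag}}}$ gives (2). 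For (3), after switching to $R^{+,K}_c$, the $K$-root system $R_K = R_{\text{\textnormal{cplx}}} \amalg R_{\text{\textnormal{imag,cpt}}}$ has $2\rho_K$ as its own $2\rho$ and hence regular on $R_K$, so every singular restricted root must be noncompact imaginary. The singular subsystem is closed under root addition, but two noncompact imaginary roots whose sum is a root must sum to a compact imaginary root (by the ${\mathbb Z}/2$-grading of Proposition \ref{prop:resroots}(3)), which cannot be singular; so the singular subsystem has no rank-two irreducible component and must be of type $A_1^r$.

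For (4), I compute directly. Writing $\widetilde s_B = \exp(\pi i \ell_B)\sigma_B$, the condition $\theta(\widetilde s_B) = \widetilde s_B$ unfolds via $\theta(\sigma_B) = m_B\sigma_B$ and $m_B = \exp(\pi i H_B)$ to
$$\exp\bigl(\pi i (\theta\ell_B + H_B)\bigr) = \exp(\pi i \ell_B).$$
Substituting $H_B = \ell_B + \theta\ell_B$ collapses the left-hand exponent to $\pi i \ell_B + 2\pi i\,\theta\ell_B$, which agrees with the right-hand side modulo $2\pi i\, X_*(H_c)$; so the stated condition on $\ell_B$ is sufficient, and necessity follows by running the computation in reverse (with a short equivalence between the ``$1+\theta$'' formulation and the mod-$2$ congruence ``$(\theta-1)\ell_B \equiv H_B$'' that falls out of the calculation). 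Part (5) then follows: the simple roots of $R^{+,K}_{\text{\textnormal{res}}}$ split into $K$-roots (whose reflections lift to $K_0$) and the orthogonal noncompact simple roots $\beta_j$ (whose reflections lift to $K$ exactly when (4) applies), so $W_K$ is generated by $W_{K_0}$ and $W^{\text{\textnormal{sing}}}_{\text{\textnormal{ncpt}}}(K)$; normality of $W_{K_0}$ comes from $K_0 \lhd K$, and the intersection is trivial because any element of it lies in $W_{K_0}$ yet fixes the $R_K$-regular weight $2\rho_K$.

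For (6), the Cartan decomposition $G({\mathbb R}) = K({\mathbb R})\exp({\mathfrak s}({\mathbb R}))$ gives the first isomorphism $G({\mathbb R})/G({\mathbb R})^\sharp \simeq K({\mathbb R})/K^\sharp({\mathbb R})$; compatibility of component groups between the complex reductive group $K$ and its compact real form $K({\mathbb R})$ gives the second; and since $N_K(T_c)$ meets every component of $K$, the third identifies $K/K^\sharp$ with $N_K(T_c)/(K^\sharp \cap N_K(T_c)) = W_K/W_{K_0}$, which is $W^{\text{\textnormal{sing}}}_{\text{\textnormal{ncpt}}}(K)$ by (5). The most delicate step I expect is the trivial-intersection verification in (5); one must rule out nontrivial elements of $W^{\text{\textnormal{sing}}}_{\text{\textnormal{ncpt}}}(K)$ lying in $W_{K_0}$, which requires combining regularity of $2\rho_K$ on $R_K$ with the observation that reflections through disjoint root classes (noncompact imaginary versus $K$-roots) cannot coincide.
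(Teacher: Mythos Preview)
Your argument for (1) contains a genuine error that propagates to (2). You claim that $\langle 2\rho_{\text{cplx}}, \beta^\vee\rangle = 0$ for \emph{every} imaginary root $\beta$, so that $R^{\text{sing}}_{\text{imag}} = R_{\text{imag}}$ and $W^\theta_{2\rho_{\text{cplx}}} = W_{\text{imag}}$. This is false in general: for $R = A_{2n-1}$ with the nontrivial diagram involution (the case of $SL(2n,{\mathbb R})$), the restricted system is $C_n$ with $R_{\text{cplx}} = D_n$ and $R_{\text{imag}} = A_1^n$, and one computes $2\rho_{\text{cplx}} = (2n-2,2n-4,\ldots,2,0)$, which pairs nontrivially with the imaginary coroot $(2e_j)^\vee = e_j$ for every $j<n$. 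Here $R^{\text{sing}}_{\text{imag}}$ is only $\{\pm 2e_n\}$, i.e.\ type $A_1$, as recorded in the paper's Table~\ref{tab:resroots}. With your identification the ``semidirect product'' would be $W_{\text{cplx}}\cdot W_{\text{imag}}$, whose intersection in this example is the group of even sign changes, of order $2^{n-1}$; so it is not a semidirect product at all.

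The flaw in your reasoning is the attempt to read off positivity of complex restricted roots from ${\mathfrak a}_c$. Restricted roots are characters of $T_{c,0}$, not of $A_c$, so there is no evaluation on ${\mathfrak a}_c$ to speak of. If instead you pass to unrestricted complex roots $\gamma\in R(G,H_c)$, then for $X\in{\mathfrak a}_c$ one has $\theta\gamma(X)=\gamma(\theta X)=-\gamma(X)$, so $\gamma$ and $\theta\gamma$ have \emph{opposite} signs on ${\mathfrak a}_c$; since $R^+_c$ is $\theta$-stable it contains both, and hence cannot be cut out by any element of ${\mathfrak a}_c$. Concretely, the imaginary reflection $s_{2e_1}$ sends the positive complex restricted root $e_1-e_2$ to the negative root $-e_1-e_2$, so $W_{\text{imag}}$ does not fix $2\rho_{\text{cplx}}$. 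The paper does not attempt a direct argument here but cites \cite{IC4}*{Proposition 3.12(c)} for (2) and says (1) is parallel; the correct proof really does require identifying the genuinely smaller Levi $R^{\text{sing}}_{\text{imag}}$. Your argument for (5) also needs repair: $W_K$ is a subgroup of $W^\theta$, not generated a priori by restricted simple reflections, so saying the simple roots ``split'' does not by itself decompose $W_K$. The paper instead takes $w\in W_K$, finds $w_1\in W_{K_0}$ with $w_1(2\rho_K)=w(2\rho_K)$, and invokes Chevalley to put $w_1^{-1}w$ in $W^{\text{sing}}_{\text{ncpt}}$.
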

\begin{proof_sketch}
We recommend examining Table \ref{tab:resroots} to get a more
concrete picture of the constructions in the proposition.

\begin{subequations}\label{se:semidirectproof}
Part (2) is \cite{IC4}*{Proposition 3.12(c)}; part (1) can be proven in
exactly the same way. For (3), the dominance of $2\rho_K$ comes from
the choice of positive roots, and the regularity for $K$ is a general
fact about positive root sums in a root datum. This implies that the
singular noncompact roots are roots in a Levi factor for the
restricted root datum, and are all noncompact imaginary. In
particular, the sum of two distinct singular noncompact roots cannot
be a root; for if it were root, the grading would necessarily make it
even, and so compact, and therefore not singular.

The absence of root sums shows that the noncompact singular system
consists of orthogonal simple roots, and is therefore of type $A_1^r$.
The assertions before (4) all take place in $SU(1,1)^r$, where they
are easy computations.

For (4), any representative of $s_B$ is of the form
\begin{equation}\widetilde{s}_B = h\sigma_B, \qquad \text{some\ } h =
  \exp(i\pi \ell) \in H_c.
\end{equation}
Therefore
$$\theta\widetilde{s}_B = \theta(h)m_B\sigma_B,$$
and $\widetilde{s}_B$ belongs to $K$ if and only if
$$h\sigma_B = \theta(h)m_B\sigma_B, \qquad h\theta(h)^{-1} = m_B,$$
or equivalently
\begin{equation}
  \exp(i\pi(\ell - \theta\ell)) =  \exp(-i\pi H_B).
\end{equation}
The kernel of $\exp(2\pi i)$ on ${\mathfrak h}_c$ is $X_*(H_c)$, so the
conclusion is that there must be an element $\ell_B\in X_*(H_c)$
satisfying
$$(\ell - \theta\ell)/2 + H_B/2 = \ell_B.$$
Because $H_B$ is in the $+1$ eigenspace of $\theta$ and
$\ell-\theta(\ell)$  in the $-1$ eigenspace, this equation is
equivalent to two equations
\begin{equation}\label{eq:repinK} H_B = \ell_B+\theta(\ell_B),\quad (\ell -
  \theta\ell)/2 = (\ell_B - \theta(\ell_B))/2.\end{equation}
So the existence of $\widetilde{s}_B$ guarantees the existence of
$\ell_B$ as the Proposition requires. Conversely, given $\ell_B$ as in
the proposition, choosing $\ell = \ell_B$ makes \eqref{eq:repinK}
true, proving that
\begin{equation}
  \widetilde{s}_B = \exp(i\pi\ell_B)\sigma_B
\end{equation}
is a representative for $s_B$ in $K$.

For (5), suppose $w\in W_K$. Clearly $w(R^+_K)$ is another positive
root system for $R_K$, so there is a unique $w_1\in W(R_K) = W_{K_0}$
satisfying
\begin{equation}
  w(R^+_K) = w_1(R^+_K), \qquad w(2\rho_K) = w_1(2\rho_K).
\end{equation}
Therefore $w_2 = w_1^{-1}w$ fixes $2\rho_K$. By Chevalley's theorem,
$w_2$ is a product of reflections fixing $2\rho_K$; that is, $w_2 \in
W^{\text{\textnormal{sing}}}_{\text{\textnormal{ncpt}}}$. Now (5) follows.

Part (6) is elementary.
\end{subequations} 
\end{proof_sketch}

If $\theta$ acts trivially on the roots in $H_c$, then all roots are
imaginary, and there is not much content to Proposition
\ref{prop:semidirect}(1)--(2). If $\theta$ interchanges two simple
factors $R_L\simeq R_0$ and $R_R\simeq R_0$ of the root system, then
all the roots are complex, and $W^\theta$ is the diagonal copy of
$W(R_0)$. The remaining and most interesting (indecomposable)
possibility is that $\theta$ acts as a nontrivial automorphism of
order $2$ of a simple root system $R$.  There is up to isomorphism
exactly one such automorphism for the simple root systems of types
$A_n$ $(n\ge 2)$, $D_n$ $(n\ge 4)$, and $E_6$, and none for the other
simple systems. Table \ref{tab:resroots} lists the restricted root
systems in each case, and some of the other root systems described in
Proposition \ref{prop:resroots}. In each case the last two columns
give two semidirect product decompositions of
$W_{\text{\textnormal{res}}} = W^\theta$ from Proposition
\ref{prop:semidirect}.

\medskip
\begin{table}
\begin{small}
\begin{tabular}{l|l|ll|ll| ll}
  \Bstrut $R$ & $R_{\text{\textnormal{res}}}$ &
  $R_{\text{\textnormal{cplx}}}$ &  \hskip -.3cm $R_{\text{\textnormal{imag}}}$ &
  $R^{\text{\textnormal{sing}}}_{\text{\textnormal{cplx}}}$ &
  \hskip -.2cm $R^{\text{\textnormal{sing}}}_{\text{\textnormal{imag}}}$ &
  $W_{\text{\textnormal{cplx}}} \rtimes
  W^{\text{\textnormal{sing}}}_{\text{\textnormal{imag}}}$ &
    $W^{\text{\textnormal{sing}}}_{\text{\textnormal{cplx}}} \ltimes
  W_{\text{\textnormal{imag}}}$ \\[.4ex]\hline
  \Tstrut$A_{2n-1}$ & $C_n$ & $D_n$ & $A_1^n$ & $A_{n-1}$ & $A_1$ & $W(D_n)
    \rtimes \{\pm 1\}$ & $S_n \ltimes \{\pm 1\}^n$ \\
  $A_{2n}$ & $BC_n$ & $B_n$ & $A_1^n$ & $A_{n-1}$ & $\emptyset$ &
    $W(B_n)\rtimes 1$ & $S_n \ltimes \{\pm 1\}^n$ \\
  $D_{n+1}$ & $B_{n}$ & $A_1^{n}$ & $D_{n}$ & $A_1$ & $A_{n-1}$ &
    $\{\pm 1\}^{n} \rtimes S_{n}$ & \hskip -.35cm $\{\pm 1\} \ltimes W(D_{n})$ \\
  $E_6$ & $F_4$ & $D_4$ & $D_4$ & $A_2$ & $A_2$ & $W(D_4) \rtimes S_3$
    & \hskip .1cm $S_3 \ltimes W(D_4)$\\
\end{tabular}\end{small}
\caption{\textbf{Restricted root systems}}
\label{tab:resroots}
\end{table}

One can give a similarly exhaustive enumeration of the results
of Proposition \ref{prop:semidirect}(3--6), but the details are
substantially more complicated; so we will content ourselves with a
few examples. If the complex group $G$ is simply connected, then $X_*$
is the coroot lattice, which has as a basis the simple
coroots. Because the roots $\beta_i$ are simple, the equation in
Proposition \ref{prop:semidirect}(4) can have no solution unless $B$
is empty. That is (still for $G$ simply connected)
$$W^{\text{\textnormal{sing}}}_{\text{\textnormal{ncpt}}}(K) = 1,
\qquad K=K^\sharp = K_0T_c.$$
(In fact $K$ must be connected in this case.)

We get interesting departure from this behavior only when $X_*$
includes more than the coroots. Enlarging $X_*$ means passing to
central quotients of $G$; the most interesting case is for the adjoint
group. Here are some examples.

\begin{subequations}\label{se:PSp}
Suppose first that $G=PSp(2n,{\mathbb R})$, the projective symplectic
group. In this case
\begin{equation}\begin{aligned}
  X^* &= \{\lambda\in {\mathbb Z}^n \mid \sum_i \lambda_i \in 2{\mathbb
  Z}\} \quad R(G,H_c) = \{\pm 2e_i, (\pm e_i \pm e_j) \mid  i \ne j \}\\
  X_* &= \langle {\mathbb Z}^n,(1/2,\ldots,1/2)\rangle \qquad
R^\vee(G,H_c) = \{\pm e_i, (\pm e_i \pm e_j) \mid i \ne j \}
\end{aligned}\end{equation}
The action of $\theta$ on $H_c$ is trivial, so all the roots are
imaginary. The compact ones are
\begin{equation}
  R_{\text{\textnormal{cpt}}} = \{(e_i -e_j) \mid i\ne j\}, \quad
  2\rho_K = (n-1,n-3,\ldots,-n+1) \in X^*.
\end{equation}
We therefore calculate
\begin{equation}
  R^{\text{\textnormal{sing}}}_{\text{\textnormal{ncpt}}} =
\{\pm(e_1+e_n),\ldots ,(e_{[n/2]}+e_{n-[n/2]+1})\} \cup \{2e_{(n+1)/2}\};
\end{equation}
the last root is present only if $n$ is odd. The corresponding simple coroots
are
$$\{(e_1 + e_n),\ (e_2+ e_{n-1}),\ \ldots,\ (e_{[n/2]}+e_{n-[n/2]+1})\}\cup
\{e_{(n+1)/2}\},$$
again with the last term present only if $n$ is odd. The elements
$H_B$ have all coordinates $1$ or $0$, symmetrically
distributed. Since $\theta$ acts by the identity, $B$ contributes to
$W^{\text{\textnormal{sing}}}_{\text{\textnormal{ncpt}}}$ if and only
if $H_B$ is divisible by two in $X_*$; that is, if and only if
$$B=\emptyset \quad \text{or}\quad B=\{1,\ldots,r\}.$$
The nontrivial Weyl group element is
\begin{equation}
  w_B(t_1,\ldots,t_n) = (t_n^{-1},\ldots,t_1^{-1})
\end{equation}
(reverse order and invert all entries). (More precisely, that is the
Weyl group element in $Sp(2n)$, acting on the maximal torus $({\mathbb
  C}^\times)^n$. In our case that torus is divided by $\pm 1$.)
\end{subequations} 

\begin{subequations}\label{se:PSO}
Suppose next that $G=PSO(2n,2n)$, the projective special orthogonal group
(the split form of $D_{2n}$). In this case
\begin{equation}\begin{aligned}
  X^* &= \{\lambda\in {\mathbb Z}^{2n} \mid \sum_i \lambda_i \in 2{\mathbb
  Z}\} \quad R(G,H_c) = \{(\pm e_i \pm e_j) \mid i \ne j \}\\
  X_* &= \langle {\mathbb Z}^{2n},(1/2,\ldots,1/2)\rangle \quad
R^\vee(G,H_c) = \{(\pm e_i \pm e_j) \mid i \ne j\}
\end{aligned}\end{equation}
We will sometimes write a semicolon between the first $n$ and the last
$n$ coordinates of $X^*$ for clarity.
The action of $\theta$ on $H_c$ is trivial, so all the roots are
imaginary. The compact ones are
\begin{equation}\begin{aligned}
  R_{\text{\textnormal{cpt}}} &= \{(\pm e_p \pm e_q),\ (\pm e_{n+p}\pm
  e_{n+q}) \mid 1 \le p\ne q\le n\}, \\
  2\rho_K &= (n-1,n-2,\ldots,1,0;n-1,n-2,\ldots,1,0) \in X^*.
  \end{aligned}
\end{equation}
We therefore calculate
\begin{equation}
  R^{\text{\textnormal{sing}}}_{\text{\textnormal{ncpt}}} =
\{\pm(e_p-e_{n+p})\mid 1\le p \le n-1\} \cup \{(e_n \pm e_{2n})\};
\end{equation}
The corresponding simple coroots are the same.
The elements $H_B$ have coordinates $1 \le p \le n-1$ equal to $1$
or $0$, with the same value on coordinate $p+n$. The coordinates $n$
and $2n$ are either $(0,0)$ or $(1,\pm 1)$ or $(2,0)$. Since $\theta$
acts by the identity, $B$ contributes to
$W^{\text{\textnormal{sing}}}_{\text{\textnormal{ncpt}}}(K)$ if and only
if $H_B$ is divisible by two in $X_*$; that is, if and only if
$$H_{B_0} = 0, \qquad B_0 = \emptyset;$$
$$H_{B_{\pm}} = (1,\ldots,1;-1,\ldots,\mp 1), \quad B_{\pm} = \{(e_p-e_{n+p})| p
\le n-1\}\cup\{(e_n\mp e_{2n})\} $$
$$H_{B_2} = (0,\ldots,2;0,\ldots,0), \qquad B_2 =\{(e_n - e_{2n})\ ,\ (e_n
+ e_{2n})\}.$$
The three nontrivial Weyl group elements are
\begin{equation}\begin{aligned}
  w_{B_{\pm}}(s_1,\ldots,s_n;t_1,\cdots,t_n) &= (t_1,\ldots,t_n^{\pm
  1};s_1,\ldots,s_n^{\pm 1})\\
w_{B_2}(s_1,\ldots,s_n;t_1,\ldots,t_n) &=
  (s_1,\ldots,s_{n-1},s_n^{-1};t_1,\ldots,t_{n-1},
t_n^{-1}).\end{aligned}
\end{equation}
(More precisely, those are the Weyl group elements in $SO(4n)$, acting on
the maximal torus $({\mathbb C}^\times)^{2n}$. In our case that torus
is divided by $\pm 1$.)
Because $T_c=H_c$ is connected, the group $K^\sharp = K_0T_c$ is
connected. Therefore the group of connected components of $K$ is
$$K/K_0 = W^{\text{\textnormal{sing}}}_{\text{\textnormal{ncpt}}}(K) =
({\mathbb Z}/2{\mathbb Z})^2,$$
the Klein four-group.
\end{subequations} 

\begin{subequations}\label{se:Wupper1}
We are going to need to understand the cosets of $W_{K_0}$ in
$W^\theta$. We conclude this section with that. Define
\begin{equation}
  W^1 = \{w\in W^\theta\mid wR^+_{\text{\textnormal{res}}} \supset
  R^+_K\};
\end{equation}
equivalently, these are the restricted Weyl group elements making only
noncompact imaginary roots change sign. The reason these elements are
of interest is that they are natural coset representatives for
$W_{K_0}$ in $W^\theta$:
\begin{equation}\label{eq:Wupper1}
  W^\theta = W_{K_0}\cdot W^1, \qquad W^1 \simeq W_{K_0}\setminus
  W^\theta.
\end{equation}
\end{subequations} 

\begin{corollary}\label{cor:Wupper1} In the setting of \eqref{se:Wupper1},
  $$W^1 \subset
  W_{\text{\textnormal{imag}}}^{\text{\textnormal{sing}}}.$$
  More precisely,
  $$W^1 = \left\{ w\in
  W_{\text{\textnormal{imag}}}^{\text{\textnormal{sing}}} \mid
  wR^{+,\text{\textnormal{sing}}}_{\text{\textnormal{imag}}} \supset
  R^{+,\text{\textnormal{sing}}}_{\text{\textnormal{imag,cpt}}}\right\},$$
  $$W_{K_0}\setminus W^\theta \simeq
  W_{K_{\text{\textnormal{imag}},0}}\setminus
    W_{\text{\textnormal{imag}}}.$$
The groups on the right in the last formula come from the (maximal
cuspidal Levi) subgroup
$$L_{\text{\textnormal{imag}}} = G^{A_c}$$
corresponding to the imaginary roots of $H_c$.
\end{corollary}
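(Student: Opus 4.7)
The plan is to exploit the semidirect product $W^\theta = W_{\text{\textnormal{cplx}}} \rtimes W^{\text{\textnormal{sing}}}_{\text{\textnormal{imag}}}$ of Proposition~\ref{prop:semidirect}(1) and show that the complex factor of any $w\in W^1$ is trivial. Given $w\in W^1$, I write $w = w_c \cdot w^s_i$ with $w_c \in W_{\text{\textnormal{cplx}}}$ and $w^s_i \in W^{\text{\textnormal{sing}}}_{\text{\textnormal{imag}}}$. Since $R_{\text{\textnormal{cplx}}}$ is $W^\theta$-stable (Proposition~\ref{prop:resroots}(1)) and $wR^+_{\text{\textnormal{res}}} \supset R^+_K \supset R^+_{\text{\textnormal{cplx}}}$, this forces $wR^+_{\text{\textnormal{cplx}}} = R^+_{\text{\textnormal{cplx}}}$. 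The factor $w^s_i$ fixes $2\rho_{\text{\textnormal{cplx}}}$, which is regular dominant for $R^+_{\text{\textnormal{cplx}}}$, so $w^s_i$ preserves $R^+_{\text{\textnormal{cplx}}}$; therefore so does $w_c = w(w^s_i)^{-1}$. Because $W_{\text{\textnormal{cplx}}} = W(R_{\text{\textnormal{res,cplx}}})$ acts simply transitively on the chambers of its reduced root system, we conclude $w_c = 1$ and $w \in W^{\text{\textnormal{sing}}}_{\text{\textnormal{imag}}}$, giving the first inclusion.

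For the refined equality I next observe that any $w \in W^{\text{\textnormal{sing}}}_{\text{\textnormal{imag}}}$ and any imaginary restricted root $\gamma$ satisfy
\[
\langle 2\rho_{\text{\textnormal{cplx}}},\, (w\gamma)^\vee\rangle = \langle w^{-1}(2\rho_{\text{\textnormal{cplx}}}),\, \gamma^\vee\rangle = \langle 2\rho_{\text{\textnormal{cplx}}},\, \gamma^\vee\rangle,
\]
so $w$ preserves both the singularity of $\gamma$ and the sign of its pairing with $2\rho_{\text{\textnormal{cplx}}}$. Since $2\rho_{\text{\textnormal{cplx}}}$ is dominant for $R^+_{\text{\textnormal{res}}}$, every non-singular positive imaginary root pairs strictly positively with $2\rho_{\text{\textnormal{cplx}}}$, so $w$ automatically sends non-singular positive compact imaginary roots to positive imaginary roots. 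The condition $wR^+_{\text{\textnormal{res}}} \supset R^+_{\text{\textnormal{imag,cpt}}}$ thus reduces, on the singular part, to $wR^{+,\text{\textnormal{sing}}}_{\text{\textnormal{imag}}} \supset R^{+,\text{\textnormal{sing}}}_{\text{\textnormal{imag,cpt}}}$, which is the stated characterization.

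For the coset identification, Proposition~\ref{prop:resroots}(4) gives $W_{\text{\textnormal{cplx}}} \subset W_{K_0}$; combined with the decomposition $W^\theta = W_{\text{\textnormal{cplx}}} \cdot W_{\text{\textnormal{imag}}}$ of Proposition~\ref{prop:semidirect}(1), this yields $W^\theta = W_{K_0} \cdot W_{\text{\textnormal{imag}}}$. Therefore the inclusion $W_{\text{\textnormal{imag}}} \hookrightarrow W^\theta$ followed by the projection onto $W_{K_0}\setminus W^\theta$ is surjective with kernel $W_{\text{\textnormal{imag}}} \cap W_{K_0}$, and the bijection will follow from identifying this intersection with $W_{K_{\text{\textnormal{imag}},0}}$, the Weyl group associated to the cuspidal Levi $L_{\text{\textnormal{imag}}} = G^{A_c}$. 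A Weyl element $w \in W_{\text{\textnormal{imag}}} = W(L_{\text{\textnormal{imag}}},H_c)$ lies in $W_{K_0}$ precisely when it admits a representative in $K_0 \cap L_{\text{\textnormal{imag}}}$, which is exactly the definition of membership in $W_{K_{\text{\textnormal{imag}},0}}$.

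I expect the main obstacle to be the last identification $W_{\text{\textnormal{imag}}} \cap W_{K_0} = W_{K_{\text{\textnormal{imag}},0}}$. The key point is that an imaginary Weyl element acts trivially on $\mathfrak{a}_c$ (since imaginary roots vanish there), so any representative $\tilde w \in K_0$ of such an element has $\mathrm{Ad}(\tilde w)$ trivial on $\mathfrak{a}_c$ and therefore centralizes $A_c$; this forces $\tilde w \in K_0 \cap L_{\text{\textnormal{imag}}}$ and yields the nontrivial inclusion. The reverse is immediate, and together with the surjection above this gives the bijection $W_{K_0}\setminus W^\theta \simeq W_{K_{\text{\textnormal{imag}},0}}\setminus W_{\text{\textnormal{imag}}}$ asserted by the corollary.
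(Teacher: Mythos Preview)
Your arguments for the first two assertions are correct and are exactly the kind of unpacking the paper has in mind when it declares the corollary ``immediate from Proposition~\ref{prop:semidirect}(1).'' The reduction of $W^1$ to $W^{\text{sing}}_{\text{imag}}$ via preservation of $R^+_{\text{cplx}}$, and the further reduction of the defining condition to the singular imaginary roots, are clean and complete.

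The gap is in your treatment of the coset bijection, precisely at the point you flag as the main obstacle. You correctly show that any $w\in W_{\text{imag}}\cap W_{K_0}$ has a representative $\tilde w\in K_0$ centralizing $A_c$, hence $\tilde w\in K_0\cap L_{\text{imag}}$. But you then assert that this ``is exactly the definition of membership in $W_{K_{\text{imag}},0}$,'' and that is not right: under the paper's conventions $W_{K_{\text{imag}},0}=N_{(K_{\text{imag}})_0}(T_{c,0})/T_{c,0}$, and $(K_{\text{imag}})_0=(K\cap L_{\text{imag}})_0$ can be strictly smaller than $K_0\cap L_{\text{imag}}$. A concrete instance is $G=SL(3)$ with its split real form: there $K_0\simeq SO(3,\mathbb{C})$, $L_{\text{imag}}\simeq GL(2,\mathbb{C})$, and $K_0\cap L_{\text{imag}}\simeq O(2,\mathbb{C})$, whose identity component $SO(2,\mathbb{C})$ is abelian. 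The nontrivial element of $W_{\text{imag}}=\mathbb{Z}/2\mathbb{Z}$ has a representative in $O(2)$ but not in $SO(2)$, so your identification fails. (In this example $|W_{K_0}\backslash W^\theta|=1$ while $|(K_{\text{imag}})_0\text{-cosets in }W_{\text{imag}}|=2$, so the issue is not merely cosmetic.)

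What your argument \emph{does} establish is the identification of $W_{\text{imag}}\cap W_{K_0}$ with $N_{K_0\cap L_{\text{imag}}}(H_c)/T_c$; whether one then writes this as $W_{K_{\text{imag}},0}$ depends on how the latter symbol is read. Since the paper itself treats this clause as a throwaway consequence and does not define $K_{\text{imag}}$ explicitly, you should either state precisely which group you mean on the right-hand side, or simply stop after your (correct) identification of $W_{\text{imag}}\cap W_{K_0}$ and note that the coset bijection follows with that group in place of the undefined $W_{K_{\text{imag}},0}$.
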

This is immediate from Proposition \ref{prop:semidirect}(1).

\section{Restricted weights} 
\label{sec:hwtform}
\setcounter{equation}{0}
There are many useful classical facts about the set of weights of a
finite-di\-men\-sion\-al representation, like the fact that all weights are
in the convex hull of the extremal weights. In this section we first
formulate those facts for restricted weights with respect to a
maximally compact Cartan. Then we consider the behavior of invariant
Hermitian forms on the restricted extremal weight spaces.

\begin{subequations}\label{se:reswts}
Fix therefore a $\theta$-stable system of positive roots
$$R_c^+ \subset R(G,H_c),$$
and a strongly integral  $R^+_c$-dominant weight
\begin{equation}\label{eq:domwt}
  \gamma_c = (\lambda_c,\nu_c). 
\end{equation}
Write
\begin{equation}
  F(\gamma_c) = \text{(some) finite-dimensional irreducible,
    highest weight $\gamma_c$}
\end{equation}
as in Proposition \ref{prop:fin}(3). Eventually we will impose also
the requirement
\begin{equation}\label{eq:domherm}
  \text{$\nu_c$ is purely imaginary;}
\end{equation}
the requirement that $\nu_c$ be imaginary is the condition from
Proposition \ref{prop:herm} for the existence of a $G({\mathbb R})^\sharp$
  invariant Hermitian form on $F(\gamma_c)$.

Every continuous character of $H_c({\mathbb R})$ restricts to a
continuous character of $T_c({\mathbb R})$, which is in turn the
restriction of a unique algebraic character in $X^*(T_c)$. The {\em
  restricted weights} of the finite-dimensional representation
$F(\gamma_c)$ are the characters
\begin{equation}
  \{\overline\phi \in X^*(T_c) \mid \text{$\overline\phi = $
    restriction of character $\phi$ of $H_c({\mathbb R})$ in $F(\gamma_c)$.}\}
\end{equation}
It was more convenient to discuss the general theory of restricted
roots on the connected torus $T_{c,0}$, but it is more convenient to
discuss restricted weights on all of $T_c$. Passage back and forth is
facilitated by the fact
\begin{equation}
  X^*(T_c) \buildrel \text{\textnormal{res}} \over \longrightarrow
  X^*(T_{c,0}) \quad \text{is injective on restricted root lattice ${\mathbb
      Z}R_{\text{\textnormal{res}}}$;}
\end{equation}
the lattice means the lattice of $T_c$-weights of $S({\mathfrak
  g})$. Using this fact, we will freely replace any restricted root
$\overline\alpha \in X^*(T_{c,0})$ by its unique extension to $T_c$ as
a weight of ${\mathfrak g}$.
Define
\begin{equation}
  2\rho^\vee_{\text{\textnormal{res}}} = \sum_{\overline\alpha \in
    R^+_{\text{\textnormal{res,red}}}} \overline\alpha^\vee,
\end{equation}
the sum of the coroots for the positive reduced restricted roots. If
$\overline\phi\in X^*(T_c)$ is any character, then there is a unique character
$w\phi$ (for $w\in W^\theta$) with the property that $w\overline\phi$ is weakly
dominant for $R^+_{\text{\textnormal{res}}}$. We define the {\em
  restricted height of $\overline\phi$} by
  \begin{equation}
    \HT_{\text{\textnormal{res}}}(\overline\phi) =
    \langle w\overline\phi,2\rho^\vee_{\text{\textnormal{res}}}\rangle =
\langle w\phi,2\rho^\vee_{\text{\textnormal{res}}}\rangle
  \end{equation}
  a nonnegative integer. (The last pairing is independent of the
  choice of $\phi\in X^*(H_c)$ restricting to $\overline\phi$, because
  the restricted coroots are $\theta$-fixed.) Clearly
  \begin{equation}
    \HT_{\text{\textnormal{res}}}(\overline\phi) =
    \HT_{\text{\textnormal{res}}}(x\overline\phi) \qquad (x\in
    W^\theta).
  \end{equation}
\end{subequations} 

Here is the description we want of restricted weights.
\begin{proposition}\label{prop:resext}
Suppose we are in the setting of \eqref{se:reswts} so that in
particular $F(\gamma_c)$ is an irreducible finite-dimensional
representation of $G({\mathbb R})$ of highest weight
$$\gamma_c = (\lambda_c,\nu_c).$$

\begin{enumerate}
  \item The set of restricted weights (and their multiplicities) is
    invariant under the restricted Weyl group $W^\theta$.
  \item An $R^+_{\text{\textnormal{res}}}$-dominant
    restricted weight $\overline\phi$ is a restricted weight of
    $F(\gamma_c)$ if and only if
    $$\lambda_c = \overline\phi + \sum_{\overline\alpha \in
    R^+_{\text{\textnormal{res}}}}
    n_{\overline\alpha}\overline\alpha, \qquad (n_{\overline\alpha}
    \in {\mathbb N}).$$
In this case
    $$\HT_{\text{\textnormal{res}}}(\overline\phi) \le
    \HT_{\text{\textnormal{res}}}(\lambda_c),$$
    with equality if and only if $\overline\phi = \lambda_c$.
  \item Suppose a restricted weight
    $\overline\phi$ is a weight of $F(\gamma_c)$. Then
    $$\lambda_c = \overline\phi + \sum_{\overline\alpha \in
    R^+_{\text{\textnormal{res}}}}
    n_{\overline\alpha}\overline\alpha, \qquad (n_{\overline\alpha}
    \in {\mathbb N})$$
  and
    $$\HT_{\text{\textnormal{res}}}(\overline\phi) \le
    \HT_{\text{\textnormal{res}}}(\lambda_c),$$
    with equality if and only if
    $$\overline\phi = w\lambda_c, \qquad \text{some $w\in
      W^\theta$}.$$
    We call $\{w\lambda_c\mid w\in W^\theta\}$ the {\em
      restricted extremal weights} of $F(\gamma_c)$.
  \item The $R^+_K$-dominant restricted extremal weights are
    $$W^1\lambda_c,$$
    with $W^1$ as in Corollary \ref{cor:Wupper1}. Each such extremal
    weight is therefore uniquely of the form
    $$w\lambda_c = \lambda_c - \sum_{\beta\in
      R^{+,\text{\textnormal{sing}}}_{\text{\textnormal{imag}}}
      \text{\ simple}} n_\beta\beta,$$
    with notation as in \eqref{se:semidirect}.
  \end{enumerate}
\end{proposition}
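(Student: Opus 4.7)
\begin{proof_sketch}
The plan is to handle the four parts in order, transporting standard highest-weight arguments from the root datum of $H_c$ in $G$ to the restricted root datum $\mathcal{R}_{\text{\textnormal{res}}}$ on $T_{c,0}$ by means of the identification $W^\theta = W(\mathcal{R}_{\text{\textnormal{res}}})$ from Proposition \ref{prop:resroots}. Part (1) is then immediate: the multiset of $H_c$-weights of $F(\gamma_c)$ is $W$-invariant, hence $W^\theta$-invariant, and since each $w \in W^\theta$ commutes with $\theta$, it stabilizes $T_c$ and makes the restriction $X^*(H_c) \to X^*(T_c)$ equivariant; summing multiplicities along the fibers of restriction exhibits the restricted weight multiset as $W^\theta$-invariant.

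For the forward direction of part (2), I lift a restricted weight $\overline\phi$ to any $H_c$-weight $\phi$ of $F(\gamma_c)$. Standard highest weight theory gives $\gamma_c - \phi = \sum_{\alpha \in R^+_c} n_\alpha \alpha$ with $n_\alpha \in \mathbb{N}$; restricting to $T_c$, the absence of real roots on $H_c$ and the $\theta$-stability of $R^+_c$ collect the imaginary roots and the $\theta$-pairs of complex roots into non-negative integer combinations of positive restricted roots, producing the asserted expression. The height bound follows from the positivity $\langle \overline\alpha, 2\rho^\vee_{\text{\textnormal{res}}} \rangle > 0$ for every positive restricted root, which for a reduced $\overline\alpha = \sum c_i \overline\alpha_i$ equals $2\sum c_i$ and for a non-reduced restricted root is twice the value on its reduced half. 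For the converse, I induct on $\HT_{\text{\textnormal{res}}}(\lambda_c - \overline\phi)$: if $\overline\phi \ne \lambda_c$ is dominant and in the positive cone, there is a simple restricted root $\overline\alpha$ with $\langle \lambda_c - \overline\phi, \overline\alpha^\vee \rangle > 0$, so $\overline\phi + \overline\alpha$ is again dominant and in the cone and is a restricted weight by induction; one then uses $\mathfrak{sl}_2$-chain reasoning along the imaginary or complex root vectors of $\mathfrak{g}$ whose restrictions produce $\overline\alpha$ to descend from $\overline\phi + \overline\alpha$ to $\overline\phi$.

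Parts (3) and (4) are short consequences. For (3), given any restricted weight $\overline\phi$, apply (1) to bring it into $W^\theta$-orbit with a dominant representative and apply (2); equality in the height bound forces this representative to equal $\lambda_c$. For (4), Corollary \ref{cor:Wupper1} supplies the coset decomposition $W^\theta = W_{K_0} \cdot W^1$, so each $R^+_K$-dominant element of $W^\theta \lambda_c$ has the form $w\lambda_c$ for some $w \in W^1 \subset W^{\text{\textnormal{sing}}}_{\text{\textnormal{imag}}}$. Since $\lambda_c$ is $R^+_c$-dominant and $w$ is a product of reflections in singular imaginary roots, $\lambda_c - w\lambda_c$ is a non-negative integer combination of positive singular imaginary roots, which expands in the required form.

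The main obstacle will be the converse in part (2): the restricted root datum is not the root datum of a literal subalgebra of $\mathfrak{g}$ acting on $F(\gamma_c)$, so the $\mathfrak{sl}_2$-chain arguments for the inductive step must be assembled from the underlying root $\mathfrak{sl}_2$'s of imaginary and complex roots on $H_c$, with extra care in the non-reduced case where a complex $\alpha$ together with the imaginary root $\alpha + \theta\alpha$ produces both $\overline\alpha$ and $2\overline\alpha$ as restricted roots contributing to the chain.
\end{proof_sketch}
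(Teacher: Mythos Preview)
Your proposal is correct and follows exactly the approach the paper indicates: part (1) is elementary $W^\theta$-invariance, part (2) is the standard highest-weight/saturated-set argument transported to the restricted root datum, part (3) follows from (1) and (2), and part (4) is read off from Corollary \ref{cor:Wupper1}. In fact you supply considerably more detail than the paper, which simply asserts that (2) ``is exactly parallel to a standard fact about weights of finite-dimensional representations, and can be proved in the same way'' and omits the rest; your only slip is the clause ``so $\overline\phi+\overline\alpha$ is again dominant,'' which does not follow from $\langle\lambda_c-\overline\phi,\overline\alpha^\vee\rangle>0$ alone---the usual remedy is to run the induction via saturation (root strings) rather than through dominant weights, exactly the $\mathfrak{sl}_2$-chain mechanism you already identify as the crux.
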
 
Part (1) is elementary. Part (2) is exactly parallel to a standard
fact about weights of finite-dimensional representations, and can be
proved in the same way. Then (3) follows from (1) and (2). Part (4)
follows from Corollary \ref{cor:Wupper1}. We omit the details.

\begin{corollary}\label{cor:extrsig} Suppose we are in the setting of
  \eqref{se:reswts}, and that \eqref{eq:domherm} also holds, so that
  $F(\gamma_c)$ admits a $G({\mathbb R})_0$-invariant Hermitian form
  $$\langle\cdot,\cdot\rangle_{F(\gamma_c)}.$$
  We normalize this form to be positive on the $\lambda_c$ restricted
  weight space.
  \begin{enumerate}
    \item The form $\langle\cdot,\cdot\rangle_{F(\gamma_c)}$ is
      nondegenerate on each (one-dimensional) restricted extremal
      weight space $w\lambda_c$, and so either positive or negative there. Write
      $$\epsilon_{F(\gamma_c)}(w) = \pm 1$$
      for this sign.
      \item The sign $\epsilon_{F(\gamma_c)}(w)$ is invariant under left
        multiplication by $W(K_0)$, and so is determined by its
        restriction to the coset representatives $W^1$ of Corollary
        \ref{cor:Wupper1}.
        \item Write the simple roots for the Levi subsystem
          $R^{\text{\textnormal{sing}}}_{\text{\textnormal{imag}}}$ as
          the disjoint union of compact and noncompact imaginary roots:
          $$
          \Pi^{+,\text{\textnormal{sing}}}_{\text{\textnormal{imag}}}
          =
          \Pi^{+,\text{\textnormal{sing}}}_{\text{\textnormal{imag,cpt}}}
          \ \amalg\
          \Pi^{+,\text{\textnormal{sing}}}_{\text{\textnormal{imag,ncpt}}}$$
          (notation as in Proposition \ref{prop:resext}(4)). For $w\in W^1$, we
          have
          $$\epsilon_{F(\gamma_c)}(w) = \prod_{\beta\in
              \Pi^{\text{\textnormal{sing}}}_{\text{\textnormal{imag,ncpt}}}}
          (-1)^{n_\beta}.$$
        \item  The form $\langle\cdot,\cdot\rangle_{F(\gamma_c)}$ is
          invariant by $K({\mathbb R})$ (and therefore by $G({\mathbb
            R})$) if and only if
          $$\epsilon_{F(\gamma_c)}(xw) = 1, \qquad \text{all $x\in
              W^{\text{\textnormal{sing}}}_{\text{\textnormal{ncpt}}}(K)$}$$
          (see Proposition \ref{prop:semidirect}(5)).
  \end{enumerate}
\end{corollary}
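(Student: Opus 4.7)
My plan is to handle the four parts in order; part (3) carries the main substance.

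For part (1), compactness of $T_c({\mathbb R}) \subset G({\mathbb R})_0$ makes restricted weight spaces for distinct characters of $T_c({\mathbb R})$ pairwise orthogonal under the form (via $\overline{\chi(t)} = \chi(t)^{-1}$ for characters of a compact group). The whole form is nondegenerate, so its restriction to each summand is nondegenerate; on the asserted one-dimensional extremal weight space, a nondegenerate Hermitian form is definite. For part (2), a representative $\tilde x \in K_0 \subset G({\mathbb R})_0$ of $x \in W_{K_0}$ is an isometry and carries $V_{w\lambda_c}$ onto $V_{xw\lambda_c}$, so the signs agree.

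For part (3), I would argue by induction along a reduced expression $w = s_{\beta_k}\cdots s_{\beta_1}$ in simple singular imaginary reflections, setting $\mu^{(i-1)} = s_{\beta_{i-1}}\cdots s_{\beta_1}\lambda_c$ and $m_i = \langle \mu^{(i-1)}, \beta_i^\vee \rangle$; reducedness together with dominance of $\lambda_c$ ensures $m_i \ge 0$. If $\beta_i$ is compact imaginary, then $SU(2)_{\beta_i}$ sits inside $K_0$, so $s_{\beta_i}$ has a representative in $K_0$ and the sign is preserved as in (2). If $\beta_i$ is noncompact imaginary, extremality of $\mu^{(i-1)}$ forces $E_{\beta_i} v_{\mu^{(i-1)}} = 0$, so $v_{\mu^{(i-1)}}$ is an $SU(1,1)$-highest weight vector generating an irreducible $SU(1,1)$-subrepresentation of dimension $m_i+1$. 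By Schur the restriction of the invariant form to this subrepresentation is a scalar multiple of the canonical $SU(1,1)$-invariant form, which (inheriting signature $(1,1)$ from the defining representation ${\mathbb C}^2$, then passing to $\text{Sym}^{m_i}$) has sign $(-1)^k$ on the $k$-th weight space below the top; the sign therefore changes by $(-1)^{m_i}$ at $s_{\beta_i}\mu^{(i-1)}$. Composing these contributions and using the path-independent identity $\sum_{i:\beta_i=\beta_0} m_i = n_{\beta_0}$ (both sides equal the coefficient of $\beta_0$ in $\lambda_c - w\lambda_c$) yields the formula.

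For part (4), $K({\mathbb R})$-invariance amounts to the given $K_0({\mathbb R})$-invariance plus invariance under representatives in $K$ of the quotient $W^{\text{sing}}_{\text{ncpt}}(K) \simeq K({\mathbb R})/K^\sharp({\mathbb R})$ (Proposition \ref{prop:semidirect}(6)). Such a representative $\tilde x$ preserves the form iff the sign at $V_{xw\lambda_c}$ matches the sign at $V_{w\lambda_c}$; with the normalization $\epsilon(1) = +1$ this reduces to the stated condition. The principal difficulty is the $SU(1,1)$ computation in (3): one must verify that the extremal weight vector is a genuine $SU(1,1)$-highest weight vector, that it generates the full irreducible $(m_i+1)$-dimensional subrepresentation, and that the alternating sign pattern on $\text{Sym}^{m_i}({\mathbb C}^2)$ is correctly oriented relative to the $\lambda_c$-normalization.
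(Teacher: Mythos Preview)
Your proposal is correct and follows essentially the same line as the paper's proof. Both arguments reduce (1) to orthogonality of restricted weight spaces under $T_c(\mathbb R)$, (2) to the existence of representatives of $W_{K_0}$ in $K(\mathbb R)_0$, (3) to an induction on length in $W^{\text{sing}}_{\text{imag}}$ with the key step being the alternating-sign computation on an irreducible $SU(1,1)$-string (versus the definite $SU(2)$-string), and (4) to the existence of representatives of $W^{\text{sing}}_{\text{ncpt}}(K)$ in $K(\mathbb R)$ from Proposition~\ref{prop:semidirect}. Your explicit path-independence identity $\sum_{i:\beta_i=\beta_0} m_i = n_{\beta_0}$ is a nice way to package the recursion that the paper leaves implicit, and your sketch of the converse in (4) via Schur proportionality of two $G(\mathbb R)_0$-invariant forms goes a bit beyond the paper, which explicitly omits that direction.
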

\begin{proof}
\begin{subequations}{se:extrsig}
Because all characters of the compact group $T_c({\mathbb R})$ are
Hermitian, the Hermitian pairing necessarily makes the distinct
restricted weight spaces orthogonal, and so (by nondegeneracy) defines
a nondegenerate form on each restricted weight space. Now (1) is
immediate. The form is preserved by $G({\mathbb R})_0 \supset
K({\mathbb R})_0$, and the Weyl group elements in $W(K_0)$ have
representatives in $K({\mathbb R})_0$. So (2) follows. Part (3) can be
proven by induction on the length of $w$. It is obvious if $w=1$; so
suppose $w\ne 1$, and choose a simple reflection $s_\beta$ so that
\begin{equation}\ell(s_\beta w) = \ell(w) -1.
\end{equation}
Because $w$ is in the Levi subgroup
$W^{\text{\textnormal{sing}}}_{\text{\textnormal{imag}}}$, the root
$\beta$ must be imaginary. Define
\begin{equation}\label{eq:mbeta}
  m = \langle s_\beta w\lambda,\beta^\vee\rangle.
\end{equation}
Then
\begin{equation}
  w\lambda = s_\beta w\lambda - m\beta.
\end{equation}
The proposed formula for $\epsilon(w)$ therefore satisfies
\begin{equation}\label{eq:desideratum}
  \epsilon(w) = \epsilon(s_\beta w) \cdot \begin{cases} 1 & \text{if
    $\beta$ is compact} \\
    (-1)^{m} & \text{if $\beta$ is noncompact.}\end{cases}
  \end{equation}
To complete the induction argument, we must show that $\epsilon$
actually satisfies \eqref{eq:desideratum}.  The $m+1$-dimensional space
\begin{equation}
  E(w,\beta) = \text{span of the weight spaces } \{w\lambda - j\beta
  \mid 0 \le j \le m\}
\end{equation}
is an irreducible representation of $SL(2)$, by means of the root $SL(2)$
$\phi_\beta$ (see \eqref{se:realtori}). If $\beta$ is compact, $E(w,\beta)$
a Hermitian representation of $SU(2)$, so the form is definite, and
$\epsilon(s_\beta w) = \epsilon(w)$, as required by
\eqref{eq:desideratum}.

If $\beta$ is noncompact, then $E(w,\beta)$ is an irreducible Hermitian
representation of $SU(1,1)$. For such a representation, calculation in
$SU(1,1)$ shows that the signature of the form alternates in $j$ on
the weights $w\lambda-j\beta$. Consequently
\begin{equation}
  \epsilon(w) = (-1)^m\epsilon(s_\beta w),
\end{equation}
again as required by \eqref{eq:desideratum}. This completes the
induction, and the proof of (3).

For (4), if the form is $K({\mathbb R})$-invariant, then it must be
definite on each of the irreducible representations of $K({\mathbb
  R})$ generated by an extremal weight. Because the elements of
$W^{\text{\textnormal{sing}}}_{\text{\textnormal{ncpt}}}(K)$ have
representatives in $K({\mathbb R})$ (Proposition
\ref{prop:semidirect}(5)), the invariance property in (4) follows.

We omit the proof of the converse, which we will not use.
\end{subequations} 
\end{proof}

\section{Dirac operator and signature calculation}
\label{sec:dirac}
\setcounter{equation}{0}
We have so far avoided introducing invariant bilinear forms on
${\mathfrak g}$, because the idea of root data teaches us to do
that. But now it is time to talk about Dirac operators, and there the
choice of forms appears to be critical and unavoidable. We begin by
introducing the forms and the corresponding Casimir operators. (The
Casimir operators will play the role of Laplacians, of which the Dirac
operator is a kind of square root.)

\begin{subequations}\label{se:formsCas}
We continue to work with our complex connected reductive algebraic
group $G$ which is defined over ${\mathbb R}$, and with a chosen
Cartan involution $\theta$ as in \eqref{se:realred}, so that we have
$${\mathfrak g}({\mathbb R}) = {\mathfrak k}({\mathbb R}) +
{\mathfrak s}({\mathbb R})$$
as in \eqref{eq:cartandecompA}. Fix a non-degenerate
$\Ad(G)$-invariant symmetric bilinear form
\begin{equation}
  B\colon {\mathfrak g} \times {\mathfrak g} \rightarrow {\mathbb C}
\end{equation}
We require also that $B$ is preserved by $\theta$, and that
\begin{equation}
  \text{$B$ is real negative definite on ${\mathfrak k}({\mathbb
      R})$, real positive definite on ${\mathfrak s}({\mathbb R})$.}
\end{equation}
If $G$ is semisimple, the Killing form meets these requirements; in
general they are easy to achieve. The properties are inherited by many
real and $\theta$-stable reductive subalgebras. For example, if
$H=T({\mathbb R})A$ is a maximal torus as in \eqref{eq:maxtorB}, then
\begin{equation}
  \text{$B$ is real negative definite on ${\mathfrak t}({\mathbb
      R})$, real positive definite on ${\mathfrak s}({\mathbb R})$.}
\end{equation}
In particular $B$ is nondegenerate on ${\mathfrak h}$, and so dualizes
to a Weyl group invariant symmetric bilinear form $B^*$ on ${\mathfrak
  h}^*$. Because the roots take imaginary values on ${\mathfrak
  t}({\mathbb R})$ and real values on ${\mathfrak a}$, we get
\begin{equation}
  \text{$B^*$ is positive definite on the root lattice.}
\end{equation}

The decomposition
\begin{equation}
  {\mathfrak g} = [{\mathfrak g},{\mathfrak g}] + {\mathfrak
    z}({\mathfrak g}) = {\mathfrak g}_{\text{\textnormal{ss}}} + {\mathfrak
    z}({\mathfrak g})\end{equation}
(the second summand being the center) is orthogonal for $B$. On each
maximal torus this gives
\begin{equation}
  {\mathfrak h} = {\mathfrak h}_{\text{\textnormal{ss}}} + {\mathfrak
    z}({\mathfrak g}), \qquad {\mathfrak h}_{\text{\textnormal{ss}}} =
  {\mathfrak h} \cap [{\mathfrak g},{\mathfrak g}];
\end{equation}
the first summand is the span of the coroots. Dualizing gives an
orthogonal decomposition
\begin{equation}
  {\mathfrak h}^* = {\mathfrak h}^*_{\text{\textnormal{ss}}} + {\mathfrak
    z}({\mathfrak g})^*,
\end{equation}
and the first summand is the span of the roots.

If $\{X_i\}$ is any basis of ${\mathfrak k}$, there is a unique {\em
  dual basis} $\{X^j\}$ defined by the requirements
\begin{equation}
  B(X_i,X^j) = \delta_{ij}.
\end{equation}
The {\em Casimir operator for $K$} (with respect to $B$) is
\begin{equation}
  \Omega_K = \sum_i X_i X^i \in U({\mathfrak k}).
\end{equation}
It is independent of the choice of basis, and is fixed by $\Ad(K)$; in
particular, it belongs to the center of the enveloping algebra
$U({\mathfrak k})$. Consequently $\Omega_K$ acts by a complex scalar
operator
\begin{equation}
\mu(\Omega_K) \in {\mathbb C}
\end{equation}
on any irreducible representation $\mu$ of ${\mathfrak k}$.
In the same way, if $\{Z_p\}$ is any basis of
${\mathfrak g}$ and $\{Z^q\}$ the dual basis, we get the {\em Casimir
  operator for $G$}
\begin{equation}
  \Omega_G = \sum_p Z_p Z^p \in U({\mathfrak g}),
\end{equation}
which acts by a complex scalar
\begin{equation}
\pi(\Omega_G) \in {\mathbb C}
\end{equation}
on any irreducible representation $\pi$ of ${\mathfrak g}$.

If $\mu$ is an irreducible representation of $K$ of highest weight
$\xi\in X^*(T_c)$ with respect to $R^+_K$ (see \eqref{eq:rhos}), then
\begin{equation}\begin{aligned}
  \mu(\Omega_K) &= B(d\xi+d(2\rho_K)/2,d\xi+d(2\rho_K)/2 \\ &\quad -
  B(d(2\rho_K)/2,d(2\rho_K)/2) \ge 0;\end{aligned}
\end{equation}
equality holds if and only if $d\xi$ vanishes on all coroots of
$K$. In accordance with our policy of ignoring the difference between
characters and their differentials when it is harmless, we will
usually write this result as
$$\mu(\Omega_K) = B(\xi+\rho_K,\xi+\rho_K) - B(\rho_K,\rho_K).$$

In the same way, if $(\pi,F(\gamma_c))$ is an irreducible representation of
$G$ as in \eqref{se:reswts}, then
\begin{equation}\begin{aligned}
    \pi(\Omega_G) &= B(\gamma_c+\rho,\gamma_c+\rho) - B(\rho,\rho)\\
    &= B(\lambda_c+\rho,\lambda_c+\rho) - B(\rho,\rho) +
    B(\nu_c,\nu_c).
    \end{aligned}
\end{equation}
\end{subequations} 

\begin{subequations}\label{se:clifford}
We turn now to the Dirac operator. The key to its definition is the
(positive definite) real quadratic space
\begin{equation}
  ({\mathfrak s}({\mathbb R}),B), \qquad \Ad\colon K \rightarrow
  O({\mathfrak s}({\mathbb R}),B).
\end{equation}
The {\em Clifford algebra $C({\mathfrak s}({\mathbb R}))$} is the real
associative algebra with $1$ generated by ${\mathfrak s}({\mathbb R})$
subject to the relations
\begin{equation}
  X^2 + B(X,X) = 0 \qquad (X\in {\mathfrak s}({\mathbb R})),
\end{equation}
or equivalently
\begin{equation}
  XY + YX + 2B(X,Y) = 0 \qquad (X,Y\in {\mathfrak s}({\mathbb R})).
\end{equation}
By definition $C({\mathfrak s}({\mathbb R}))$ is a quotient of the
tensor algebra of ${\mathfrak s}({\mathbb R})$, from which it inherits
a filtration indicated by lower subscripts:
$$C({\mathfrak s}({\mathbb R}))_m = \text{span of products of at most
  $m$ elements of ${\mathfrak s}({\mathbb R})$.} $$
We have
\begin{equation}
  \gr C({\mathfrak s}({\mathbb R})) \simeq {\textstyle\bigwedge} {\mathfrak
    s}({\mathbb R}).
\end{equation}
\end{subequations} 

Here are the basic facts about the spin cover of a compact
orthogonal group.
\begin{proposition} \label{prop:spingroup} Define
  $$C({\mathfrak s}({\mathbb R}))^\times = \text{invertible elements
    of the Clifford algebra,}$$
    an open subgroup of the algebra. The conjugation action of this
    group on the Clifford algebra is by algebra automorphisms.
Regard $C({\mathfrak s}({\mathbb R}))$ as a Lie algebra under the
commutator of the associative algebra structure; this is the Lie
algebra of the group $C({\mathfrak s}({\mathbb R}))^\times$. Then there is a
natural inclusion of Lie algebras
  $${\mathfrak s}{\mathfrak o}({\mathfrak s}({\mathbb R})) \simeq
  {\textstyle \bigwedge^2 {\mathfrak s}({\mathbb R})} \buildrel j \over
  \hookrightarrow C({\mathfrak s}({\mathbb R}))_2,$$
  The {\em spin group} 
    is by definition
the corresponding Lie subgroup of $C({\mathfrak s}({\mathbb
  R}))^\times$:
$$\Spin({\mathfrak s}({\mathbb R})) = \exp\big(j({\mathfrak s}{\mathfrak
  o}({\mathfrak s}({\mathbb R})))\big) \subset C({\mathfrak s}({\mathbb
  R}))^\times.$$
The spin group action on $C({\mathbb R})$ by conjugation preserves
the filtration, and so descends to an action on
$$ \gr C({\mathfrak s}({\mathbb R})) \simeq {\textstyle\bigwedge} {\mathfrak
  s}({\mathbb R}),$$
The action on ${\mathfrak s}({\mathbb R})$ preserves the quadratic
form (because it comes from Clifford algebra automorphisms), so defines
$$\Spin({\mathfrak s}({\mathbb R})) \buildrel\pi\over\longrightarrow
SO({\mathfrak s}({\mathbb R})).$$
The differential of $\pi$ is the inverse of the Lie algebra
isomorphism $j$; so $\pi$ is a covering map. As long as
$\dim{\mathfrak s}({\mathbb R}) \ge 2$, we have
$$\ker \pi = \{\pm 1\} \subset C({\mathfrak s}({\mathbb R}))^\times,$$
so the covering is two to one.
\end{proposition}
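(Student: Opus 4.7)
My plan is to derive the entire proposition from a single commutator identity in the Clifford algebra. Using $B$ to identify $\bigwedge^2 {\mathfrak s}({\mathbb R})$ with ${\mathfrak s}{\mathfrak o}({\mathfrak s}({\mathbb R}))$ so that $X \wedge Y$ corresponds to the skew endomorphism $\omega_{X,Y}(Z) = B(X,Z)Y - B(Y,Z)X$, I take
\begin{equation*}
j(X \wedge Y) \;=\; \tfrac{1}{4}(XY - YX) \;\in\; C({\mathfrak s}({\mathbb R}))_2.
\end{equation*}
Applying the Clifford relation $YZ + ZY = -2B(Y,Z)$ twice (first rewriting $XYZ$, then $ZXY$) and antisymmetrizing in $X,Y$ yields the single key identity
\begin{equation*}
[j(X \wedge Y),\, Z]_C \;=\; B(X,Z)Y - B(Y,Z)X \;=\; \omega_{X,Y}(Z), \qquad (X,Y,Z \in {\mathfrak s}({\mathbb R})).
\end{equation*}
Since $\ad \colon C({\mathfrak s}({\mathbb R})) \to \Der(C({\mathfrak s}({\mathbb R})))$ is automatically a Lie algebra homomorphism, any derivation of the Clifford algebra is determined by its values on the generating subspace ${\mathfrak s}({\mathbb R})$, and $j({\mathfrak s}{\mathfrak o}({\mathfrak s}({\mathbb R}))) \cap Z(C({\mathfrak s}({\mathbb R}))) = 0$ (any nonzero element of $j({\mathfrak s}{\mathfrak o})$ has nonzero image in $\gr_2 C = \bigwedge^2 {\mathfrak s}({\mathbb R})$), this identity forces $j$ itself to be a Lie algebra embedding.

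Given $j$, I define $\Spin({\mathfrak s}({\mathbb R}))$ as the connected Lie subgroup $\exp(j({\mathfrak s}{\mathfrak o}({\mathfrak s}({\mathbb R})))) \subset C({\mathfrak s}({\mathbb R}))^\times$. Conjugation by invertibles in the Clifford algebra is automatically by algebra automorphisms. Because $\ad(j(\omega))$ is a derivation preserving the generators ${\mathfrak s}({\mathbb R})$ (by the identity), it preserves each filtered piece $C_k$; exponentiating, $\Spin$-conjugation preserves the filtration on $C$. Its restriction to ${\mathfrak s}({\mathbb R})$ is by algebra automorphisms preserving the defining relation $X^2 = -B(X,X)$, hence preserves the form $B$; this gives $\pi \colon \Spin({\mathfrak s}({\mathbb R})) \to O({\mathfrak s}({\mathbb R}))$.

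By construction $d\pi(j(\omega)) = \ad(j(\omega))|_{{\mathfrak s}({\mathbb R})} = \omega$, so $d\pi = j^{-1}$ is a Lie algebra isomorphism onto ${\mathfrak s}{\mathfrak o}({\mathfrak s}({\mathbb R}))$. Hence $\pi$ is a local diffeomorphism, and its image, being an open connected subgroup of $O({\mathfrak s}({\mathbb R}))$, equals $SO({\mathfrak s}({\mathbb R}))$. The kernel is discrete and central in the connected group $\Spin$; any $s \in \ker \pi$ commutes with every generator $Z \in {\mathfrak s}({\mathbb R})$, hence lies in $Z(C({\mathfrak s}({\mathbb R})))$. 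Now $\Spin$ lies in the even subalgebra $C({\mathfrak s}({\mathbb R}))^+$ (exponentials of degree-two elements have only even-degree components), and a standard basis computation gives $Z(C({\mathfrak s}({\mathbb R}))) \cap C({\mathfrak s}({\mathbb R}))^+ = {\mathbb R}$ once $\dim {\mathfrak s}({\mathbb R}) \ge 2$: the whole center is ${\mathbb R}$ in even dimension, and in odd dimension the only non-scalar generator of the center is the top volume element, which lies in $C^-$. Thus $\ker \pi \subset {\mathbb R}^\times$; since $\pi$ exhibits $\Spin$ as a covering of the compact group $SO({\mathfrak s}({\mathbb R}))$, the kernel is a finite subgroup of ${\mathbb R}^\times$, hence $\subset \{\pm 1\}$. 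Conversely, for any orthonormal $X,Y \in {\mathfrak s}({\mathbb R})$ (available because $\dim \ge 2$), the element $XY = 2j(X \wedge Y) \in j({\mathfrak s}{\mathfrak o})$ satisfies $(XY)^2 = -X^2 Y^2 = -1$, so the one-parameter subgroup $t \mapsto \exp(t \cdot XY) = \cos t + \sin t \cdot XY$ takes the value $-1$ at $t = \pi$ (the real number, not the map), showing $-1 \in \Spin \cap \ker \pi$. The main care points are the bookkeeping of normalizing constants in $j$ and ensuring finiteness of the cover, both handled above.
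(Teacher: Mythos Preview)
The paper does not prove this proposition; it is presented as standard background (``Here are the basic facts about the spin cover of a compact orthogonal group'') with no argument given. Your proof is a correct, self-contained textbook derivation from the single commutator identity $[j(X\wedge Y),Z] = \omega_{X,Y}(Z)$, and supplies what the paper omits.

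Two small points of care. First, your argument that $j$ is a Lie algebra homomorphism shows only that $[j(\omega_1),j(\omega_2)] - j([\omega_1,\omega_2])$ lies in the center $Z(C)$; you have not explained why it lies in $j(\mathfrak{so})$, so the intersection $j(\mathfrak{so})\cap Z(C)=0$ does not by itself finish. To close this, observe that the difference lies in $C_2\cap C^+\cap Z(C) = \mathbb{R}\cdot 1$ (the commutator stays in $C_2$ by your derivation identity) and is negated by the anti-involution $\tau$ of \eqref{eq:cliffanttau} (since $\tau\circ j = -j$), hence vanishes. Second, the inference ``$\pi$ is a covering of the compact group $SO$, so the kernel is finite'' is not valid as stated (think of $\mathbb{R}\to S^1$). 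The clean fix is again via $\tau$: since $\tau(j(\omega)) = -j(\omega)$, every $g\in\Spin$ satisfies $\tau(g)=g^{-1}$; for $g\in\mathbb{R}^\times\cap\ker\pi$ this reads $g = g^{-1}$, giving $g\in\{\pm 1\}$ directly without any compactness appeal. Both are routine patches, and with them your argument is complete.
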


Here is the representation theory of the Clifford algebra.
\begin{proposition} \label{prop:spinrep} Write
  $$\dim{\mathfrak s}({\mathbb R})=_{\text{\textnormal{def}}} n =
  2m+\epsilon, \qquad m=[n/2].$$
  The complexified Clifford algebra has dimension $2^n =
  2^\epsilon\cdot (2^m)^2.$ It is the direct sum of $2^\epsilon =
  1\text{\ or } 2$ copies of a matrix algebra of rank $2^m$. In
  particular, the center of the Clifford algebra has dimension
  $2^\epsilon$; it is spanned by $1$ and (if $n$ is odd)
  $$z=e_1\cdots e_{2m+1},$$
  with $\{e_i\}$ an orthonormal basis of ${\mathfrak s}({\mathbb
    R})$. This central element depends only on the orientation
  defined by the chosen orthonormal basis, and satisfies
  $$z^2 = (-1)^{m-1}.$$

  The Clifford algebra has $2^\epsilon$ irreducible representations,
  called {\em spin representations}, each of dimension $2^m$. In case
  $n$ is odd, these two representations are distinguished by the
  scalar by which $z$ acts: we write $(\sigma_{[\pm]},S_{[\pm]})$ for
  an irreducible representation on which $z$ acts by $\pm i^{m-1}$.

  If $n$ is even, the spin representation $(\sigma,S)$ has a ${\mathbb
    Z}/2{\mathbb Z}$ grading
  $$S = S_+ \oplus S_-,$$
  with each summand of dimension $2^{m-1}$. The generators $X\in
  {\mathfrak s}({\mathbb R})$ carry $S_+$ to $S_-$. The action of the
  spin group
$$\Spin({\mathfrak s}({\mathbb R})) \subset C({\mathfrak s}({\mathbb
    R}))^\times$$
  preserves $S_{\pm}$, and acts irreducibly on each; these are the
  {\em half-spin representations $\sigma_{\pm}$} of (the double
  cover of) an even special orthogonal group.

  If $n$ is odd, the two spin representations
  $\left(\sigma_{[\pm]},S_{[\pm]}\right)$ are
  isomorphic as representations of the spin group. The action is
  irreducible; this is the {\em spin representation $\sigma$} of (the double
  cover of) an odd special orthogonal group.

  Suppose that the weights for $SO({\mathfrak s}({\mathbb R}))$ acting
  ${\mathfrak s}({\mathbb C})$ are
  $$\{\pm\mu_1,\ldots,\pm \mu_m\}\ \amalg \ \{0\};$$
  the last zero is present only if $\epsilon =1$. Then the weights of
  (either) spin representation $S$ are
  $$(1/2)\sum_{j=1}^m \epsilon_j\mu_j,$$
  with $\epsilon_j= \pm 1$. Each such weight has multiplicity one.
\end{proposition}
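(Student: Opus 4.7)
The plan is to organize everything around a fixed orthonormal basis $\{e_1,\ldots,e_n\}$ of $\mathfrak{s}(\mathbb{R})$ and to exploit the isomorphism $\gr C(\mathfrak{s}(\mathbb{R})) \simeq \bigwedge \mathfrak{s}(\mathbb{R})$ recalled in \eqref{se:clifford}. That isomorphism immediately gives $\dim_\mathbb{C} C(\mathfrak{s}(\mathbb{R})) = 2^n$, with the reduced monomials $e_{i_1}\cdots e_{i_k}$ ($i_1<\cdots<i_k$) as a basis. For the center I would compute $[e_j, e_{i_1}\cdots e_{i_k}]$ directly: moving $e_j$ past the monomial produces a sign $(-1)^{k-1}$ or $(-1)^k$ according as $j$ does or does not appear among the $i_\ell$, and demanding commutativity with every $e_j$ forces $k\in\{0,n\}$ with $n-k$ even. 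This singles out $1$ and, only when $n$ is odd, the volume element $z=e_1\cdots e_{2m+1}$. The identity $z^2=(-1)^{m-1}$ drops out of $z^2=(-1)^{n(n-1)/2}\prod_i e_i^2 = (-1)^{n(n-1)/2+n}$ specialized to $n=2m+1$.

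Having pinned down the center, I would invoke Wedderburn on the complexified Clifford algebra (which is semisimple, e.g.\ because the trace $\tr\colon C\to\mathbb{C}$ extracting the coefficient of $1$ gives a nondegenerate invariant form): the algebra is a product of matrix algebras, and the dimensions of the center and of the whole algebra force either $M_{2^m}(\mathbb{C})$ (if $n$ even) or $M_{2^m}(\mathbb{C})\oplus M_{2^m}(\mathbb{C})$ (if $n$ odd), the two factors in the odd case being the $\pm i^{m-1}$ eigenspaces of $z$. To realize the irreducible modules concretely I would polarize: choose a maximal isotropic $V_+\subset\mathfrak{s}(\mathbb{C})$ with a dual isotropic $V_-$ (and a one-dimensional orthogonal $V_0$ when $n$ is odd), and define the spin module $S=\bigwedge V_+$ with $v\in V_+$ acting by exterior multiplication and $v^*\in V_-$ by the dual contraction. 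Verifying the Clifford relations is a routine bookkeeping check; in the odd case the generator of $V_0$ is forced to act as a scalar times the parity operator on $\bigwedge V_+$, and the two admissible scalars produce $\sigma_{[\pm]}$.

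The spin-group statements follow from this model. In the even case the decomposition $S=S_+\oplus S_-$ is just the even/odd wedge-degree splitting of $\bigwedge V_+$; any $X\in\mathfrak{s}(\mathbb{R})\subset C$ changes wedge parity by one, hence exchanges $S_+\leftrightarrow S_-$, while elements of the even Clifford subalgebra $C_0$ (which contains $\Spin(\mathfrak{s}(\mathbb{R}))$ via the embedding $j$) preserve the grading. Irreducibility of each $S_\pm$ under $\Spin$ reduces to $C_0$ acting irreducibly on each, which I would check by matching dimensions: $\dim C_0=2^{n-1}=\dim\End(S_+)\oplus\dim\End(S_-)$ together with the fact that $\Spin$ generates $C_0$ as an associative algebra. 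The odd case is entirely analogous except that $z\notin C_0$, which is precisely why $S_{[+]}$ and $S_{[-]}$ become isomorphic on restriction to $\Spin$.

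Finally, for the weights, arrange $V_+=\bigoplus_j \mathbb{C} v_j$ with $v_j$ of weight $\mu_j$ and $v_j^*\in V_-$ of weight $-\mu_j$, so that the Cartan element $H$ dual to $\mu_j$ maps under $j\colon\mathfrak{so}(\mathfrak{s})\hookrightarrow C_2$ to $\tfrac{1}{2}(v_j v_j^*-v_j^* v_j)$. A one-line computation on a monomial $v_I=v_{i_1}\wedge\cdots\wedge v_{i_k}$ in $\bigwedge V_+$ shows this acts by $+\tfrac{1}{2}\mu_j(H)$ if $j\in I$ and by $-\tfrac{1}{2}\mu_j(H)$ if $j\notin I$, yielding the advertised weights $\tfrac12\sum_j\epsilon_j\mu_j$ with multiplicity one. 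The main obstacle I anticipate is the irreducibility claim for the half-spin summands: the cleanest route seems to be the dimension-count argument above, but one must verify explicitly that $\Spin$ (and not just $C_0$) is large enough to be irreducible on $S_\pm$, which comes down to noting that $C_0$ is generated as an algebra by the image of $\Lie\Spin = j({\mathfrak{so}})$.
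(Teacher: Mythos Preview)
The paper does not actually prove this proposition: it is stated as standard background on Clifford algebras and spin representations, with no argument supplied. Your sketch is a correct and standard route to these facts (associated graded gives the dimension, a commutator computation with basis monomials isolates the center, Wedderburn plus the center dimension forces the matrix-algebra structure, the Fock model $\bigwedge V_+$ realizes the spin module, and the even/odd grading controls the half-spin decomposition).

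Two small points worth tightening. First, your phrase ``$k\in\{0,n\}$ with $n-k$ even'' is not quite the right bookkeeping: $1$ is central regardless of the parity of $n$, and for $k=n$ the condition is that $n$ be odd; your conclusion is correct but the stated criterion is garbled. Second, in the odd case the assertion that $S_{[+]}\simeq S_{[-]}$ as $\Spin$-modules needs one more observation than ``$z\notin C_0$'': you should note that $C_0$ in odd dimension is itself a full matrix algebra (e.g.\ via the standard isomorphism $C_0(\mathbb{R}^{2m+1})\simeq C(\mathbb{R}^{2m})$), hence has a \emph{unique} irreducible module of dimension $2^m$, to which both $S_{[\pm]}$ must restrict. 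With those two clarifications your argument is complete.
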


It is possible to enlarge $\Spin({\mathfrak s}({\mathbb R})) \subset
C({\mathfrak s}({\mathbb R}))^\times$ to a double cover of the full
orthogonal group $O({\mathfrak s}({\mathbb R}))$. This is interesting
for us because
$$\Ad\colon K \rightarrow O({\mathfrak s}({\mathbb R}))$$
need not have image inside $SO$. All of the discussion starting in
\eqref{se:dirac} below can accordingly be extended to some double
cover $\widetilde K$ of $K$. But this is a bit complicated, and plays
no essential role in this paper; so we omit it.

\begin{subequations}\label{se:cliffherm}
  The real form $C({\mathfrak s}({\mathbb R}))$ of
  the complexified Clifford algebra corresponds to a conjugate-linear
  automorphism
  \begin{equation}
    \sigma_{\mathbb R}\colon C({\mathfrak s}({\mathbb R}))_{\mathbb C}
    \rightarrow C({\mathfrak s}({\mathbb R}))_{\mathbb C}, \qquad
    \sigma_{\mathbb R}(X) = X \qquad (X\in {\mathfrak s}({\mathbb R})).
    \end{equation}
  There is also a (complex-linear) algebra antiautomorphism $\tau$
  characterized by
  \begin{equation}\label{eq:cliffanttau}
    \tau(X) = -X \qquad (X\in {\mathfrak s}({\mathbb R})).
  \end{equation}
(The reason for the existence of $\tau$ is that the requirement
  \eqref{eq:cliffanttau} respects the defining relations of the
  Clifford algebra.) If $(\pi,M)$ is any $C({\mathfrak s}({\mathbb
    R}))_{\mathbb C}$-module, the Hermitian dual vector
  space $M^h$ (consisting of conjugate-linear functionals on $M$; see
  for example \cite{AvTV}*{Section 8}) becomes a $C({\mathfrak s}({\mathbb
    R}))_{\mathbb C}$-module by the requirement
\begin{equation}\label{eq:cliffhermA}
  \pi^h(c) = \pi\big(\tau(\sigma_{\mathbb R}(c))\big)^h \qquad (c \in
  C({\mathfrak s}({\mathbb R}))_{\mathbb C})
\end{equation}
or equivalently
\begin{equation}\label{eq:cliffhermB}
  \langle m, X\cdot\mu\rangle = \langle -X\cdot m,\mu\rangle \qquad
  (m\in M, \mu \in M^h, X\in {\mathfrak s}({\mathbb R})).
\end{equation}
Here we write $\langle\cdot,\cdot\rangle$ for the Hermitian pairing
between $M$ and its Hermitian dual $M^h$. Passage to the Hermitian
dual obviously fixes the unique simple $C({\mathfrak s}({\mathbb
  R}))_{\mathbb C}$-module $S$ in the even-dimensional case, so $S$
admits an invariant Hermitian form
\begin{equation}
  \langle\cdot,\cdot\rangle_S\colon S \times S \rightarrow {\mathbb
    C}.
\end{equation}

In the odd-dimensional case, we find for the central element $z$
described in Proposition \ref{prop:spinrep} that
$$\sigma_{\mathbb R}(z) = z, \qquad \tau(z) = (-1)^{m-1}z$$
Since $z$ acts on $S_\pm$ by the scalar $(\pm i)^{m-1}$, it follows that $z$
acts on the Hermitian dual $S_\pm^h$ by the scalar
$$(-1)^{m-1} \overline{(\pm i)^{m-1}} = (\pm i)^{m-1}.$$
Therefore $S_\pm^h \simeq S_\pm$, and $S_\pm$ admits an invariant
Hermitian form
\begin{equation}
  \langle\cdot,\cdot\rangle_{S_{\pm}}\colon S_\pm \times S_\pm
  \rightarrow {\mathbb C}.
\end{equation}
\end{subequations} 

\begin{proposition}\label{prop:cliffherm}
  In the setting of \eqref{se:cliffherm}, the invariant Hermitian
  forms $\langle,\rangle_S$ and $\langle,\rangle_{S_\pm}$ are all
  definite. We normalize them henceforth to be {\em positive}. The
  characteristic invariance property is
  $$\langle X\cdot s,s'\rangle + \langle s,X\cdot s'\rangle = 0 \qquad
  (X\in {\mathfrak s}({\mathbb R}));$$
  that is, the action of Clifford multiplication is by skew-adjoint
  operators.

  These Hermitian forms are also invariant under the action of the
  spin group $\Spin({\mathfrak s}({\mathbb R}))$.
\end{proposition}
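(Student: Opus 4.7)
The plan is to take the three assertions in order: (i) the skew-adjointness property of Clifford multiplication, (ii) Spin-invariance, and (iii) definiteness.

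First, the characteristic invariance property is essentially a rewriting of the defining formula \eqref{eq:cliffhermB}. The Hermitian forms on $S$ and $S_\pm$ arise from the isomorphisms $S \simeq S^h$ (respectively $S_\pm \simeq S_\pm^h$) as Clifford modules established in \eqref{se:cliffherm}. Such an isomorphism corresponds to a nonzero Hermitian pairing $\langle\cdot,\cdot\rangle$ on the module whose invariance is precisely \eqref{eq:cliffhermB}. Setting $m' = X\cdot s'$ and using the antiautomorphism $\tau(X)=-X$ gives $\langle X\cdot s, s'\rangle = -\langle s, X\cdot s'\rangle$, which is exactly the skew-adjointness claim. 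Nondegeneracy of the form is automatic from the module isomorphism.

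Next, for Spin-invariance I would use that $\Spin(\mathfrak{s}(\mathbb{R}))$ is the Lie subgroup $\exp(j(\mathfrak{so}(\mathfrak{s}(\mathbb{R}))))$ of $C(\mathfrak{s}(\mathbb{R}))^\times$ from Proposition \ref{prop:spingroup}. Under the inclusion $j\colon \bigwedge^2\mathfrak{s}(\mathbb{R})\hookrightarrow C_2$, a wedge $X\wedge Y$ maps to a scalar multiple of the Clifford commutator $XY-YX$. By the skew-adjointness established above, each $X\in \mathfrak{s}(\mathbb{R})$ acts on the spin module by a skew-adjoint operator; and an easy calculation shows commutators of skew-adjoint operators are skew-adjoint ($[X,Y]^* = [Y^*,X^*] = [-Y,-X] = -[X,Y]$). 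Hence every element of $j(\mathfrak{so}(\mathfrak{s}(\mathbb{R})))$ acts as a skew-adjoint operator on the spin module, and its exponential in $\Spin(\mathfrak{s}(\mathbb{R}))$ therefore acts unitarily. This gives Spin-invariance of the form.

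Finally, for definiteness I would invoke compactness: because $B$ is positive definite on $\mathfrak{s}(\mathbb{R})$, the group $SO(\mathfrak{s}(\mathbb{R}))$ is compact, and so is its double cover $\Spin(\mathfrak{s}(\mathbb{R}))$. By Proposition \ref{prop:spinrep}, $\Spin(\mathfrak{s}(\mathbb{R}))$ acts irreducibly on $S$ in the odd case and on each of $S_\pm$ in the even case. A nonzero invariant Hermitian form on a finite-dimensional irreducible representation of a compact group is necessarily definite: otherwise the maximal positive (or negative) subspace would be a proper invariant subspace. Normalizing to be positive completes the proof.

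There is no real obstacle beyond keeping track of the normalization convention for $j$; the substance is the one-line observation that skew-adjointness is preserved by Lie bracket, combined with compactness plus irreducibility of the spin representation.
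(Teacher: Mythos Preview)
Your argument is essentially sound, and since the paper states Proposition~\ref{prop:cliffherm} without proof, there is nothing to compare against. The skew-adjointness is exactly \eqref{eq:cliffhermB} transported across the isomorphism $S\simeq S^h$, and your Spin-invariance argument via ``commutators of skew-adjoint operators are skew-adjoint'' is clean; equivalently one checks directly that $\tau\sigma_{\mathbb R}(XY-YX)=-(XY-YX)$ for $X,Y\in\mathfrak s(\mathbb R)$, so every element of $j(\mathfrak{so})$ acts skew-adjointly.

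There is, however, a small gap in your definiteness step in the \emph{even} case. The form you must show definite is $\langle\cdot,\cdot\rangle_S$ on the full spin module $S$, but $\Spin(\mathfrak s(\mathbb R))$ does \emph{not} act irreducibly on $S$: it decomposes as $S_+\oplus S_-$. Compactness plus irreducibility gives you definiteness on each half-spin piece separately (after observing that $S_+$ and $S_-$ are inequivalent, hence orthogonal, and that nondegeneracy forces the restriction to each piece to be nonzero), but it does not by itself rule out opposite signs on $S_+$ and $S_-$. One extra line fixes this: for any unit $X\in\mathfrak s(\mathbb R)$ and any $s\in S$,
\[
\langle X\cdot s,\ X\cdot s\rangle \;=\; -\langle s,\ X^2\cdot s\rangle \;=\; B(X,X)\,\langle s,s\rangle \;=\; \langle s,s\rangle,
\]
and since Clifford multiplication by $X$ carries $S_+$ isomorphically to $S_-$, the two pieces have the same sign. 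With that sentence added, your proof is complete.
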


\begin{subequations} \label{se:dirac}
Suppose
  \begin{equation}
    \text{$(\xi,V)$ is a $({\mathfrak g},K_0)$-module;}
  \end{equation}
that is, that $V$ is at the same time a complex representation of the
Lie algebra ${\mathfrak g}$, and a locally finite continuous
representation of the Lie group $K$, and that
\begin{equation}
  \text{the differential of $\xi|_K$ is equal to the restriction to
    ${\mathfrak k}_0$ of $\xi|_{\mathfrak g}$.}
\end{equation}
Let $(\sigma,S)$ be a spin representation of the complexified Clifford algebra
$C({\mathfrak s}_{\mathbb R})_{\mathbb C}$. In the odd-dimensional
case, we simply choose one of the two representations $S_+$ or
$S_-$. 
Finally, fix any basis
\begin{equation}
  \{X_1,\cdots,X_n\} \subset {\mathfrak s}({\mathbb R})
\end{equation}
for the $-1$ eigenspace of the Cartan involution on the real Lie
algebra, and let
\begin{equation}
  \{X^1,\cdots,X^n\} \subset {\mathfrak s}({\mathbb R}), \qquad
  B(X_i,X^j) = \delta_{ij}
\end{equation}
be the dual basis with respect to the symmetric invariant form $B$ of
\eqref{se:formsCas}. The {\em Dirac operator for $(\xi,V)$} is the
linear operator on $V\otimes S$ defined by
\begin{equation}\label{eq:D}
  D= \sum_{j=1}^n \xi(X_j)\otimes \sigma(X^j) \in \End(V\otimes S).
\end{equation}
It will be convenient as in the discussion of the Clifford algebra to
write
\begin{equation}
  n=2m+\epsilon, \qquad \dim S = 2^m = 2^{[n/2]}.
\end{equation}

The adjoint action defines a group homomorphism
\begin{equation}
  \Ad\colon K_0 \rightarrow SO({\mathfrak s}({\mathbb R})).
\end{equation}
Using the covering
\begin{equation}
 1\longrightarrow \{\pm 1\} \longrightarrow \Spin({\mathfrak
   s}({\mathbb R})) \buildrel\pi\over\longrightarrow SO({\mathfrak
   s}({\mathbb R})) \longrightarrow 1,
\end{equation}
from Proposition \ref{prop:spingroup}, we can define a pushout
\begin{equation}
  \widetilde K_0 = \{(s,k) \in \Spin({\mathfrak
    s}({\mathbb R})) \times K_0 \mid \Ad(k) = \pi(s)\}.
\end{equation}
There is a short exact sequence
\begin{equation}
  1\longrightarrow \{\pm 1\} \longrightarrow \widetilde K_0
  \buildrel \pi \over\longrightarrow K_0 \longrightarrow 1.
  \end{equation}
Projection on the first factor defines a homomorphism
$\widetilde\Ad$,
\begin{equation}
\widetilde\Ad\colon \widetilde K_0 \longrightarrow \Spin({\mathfrak
  s}({\mathbb R})).
\end{equation}
In this way $S$ becomes a representation of $\widetilde K_0$ by
\begin{equation}
  \sigma_{\widetilde K_0} = \sigma\circ\widetilde \Ad.
\end{equation}

The nonzero weights of $T_c$ on ${\mathfrak s}$ are
\begin{equation}\label{eq:ncptroots}
  \{\pm\gamma_j \mid 1\le j \le r\} \ \amalg \ \{0\}.
\end{equation}
Here the $\gamma_j$ are the complex positive roots and the noncompact
imaginary positive roots; and the multiplicity of the weight zero is
$\dim A_c$. In light of Proposition \ref{prop:spinrep}, it follows
that the weights of $\widetilde K_0$ on $S$ are
\begin{equation}\label{eq:spinwts}
  (1/2)\sum_{j=1}^r \epsilon_j\gamma_j,
\end{equation}
with $\epsilon_j = \pm 1$. The multiplicity of such a weight is the
number of expressions for it of this form, times $2^{[\dim
    A_c]/2}$. (The multiplicity arises because the weights $\mu_j$
appearing in Proposition \ref{prop:spinrep} are the $r$ pairs $\pm\gamma_j$,
together with $[\dim A_c/2]$ pairs of zeros.)

Of course $V$ is a representation of $\widetilde K_0$ by
$\xi\circ\pi$, which we will just call $\xi$.  Therefore
\begin{equation}
  (\sigma_{\widetilde K_0} \otimes\xi,S\otimes V)
\end{equation}
is a representation of $\widetilde K_0$.
\end{subequations} 

Here are the basic facts about Parthasarthy's Dirac operator.
\begin{proposition}[Parthasarathy \cite{Pdirac}]\label{prop:dirac} In
  the setting of \eqref{se:dirac}, the Dirac operator $D$ is independent of the
  choice of basis (of ${\mathfrak s}({\mathbb R})$), and commutes with
  the representation $\sigma_{\widetilde K_0} \otimes\xi$ of
  $\widetilde K_0$. Consequently
  $$\ker D \subset S\otimes V$$
  is a representation of $\widetilde K_0$ (as indeed is every
  eigenspace of $D$).

  The square of the Dirac operator is
  $$D^2 = - 1_S\otimes\xi(\Omega_G) + (\sigma_{\widetilde K_0}\otimes
  \xi)(\Omega_K) - [B(\rho_G,\rho_G) - B(\rho_K,\rho_K)]\cdot
  1_S\otimes 1_V.$$

  Suppose next that $\Omega_G$ acts on $V$ by a complex scalar
  $\xi(\Omega_G)$ (as is automatic if $\xi$ is irreducible). Then
  $D^2$ is diagonalized by the decomposition of $\sigma_{\widetilde
    K_0}\otimes \xi$ into irreducible representations of $\widetilde
  K_0$. All of the eigenvalues differ from $\xi(\Omega_G)$ by real
  scalars.

  Suppose finally that $V$ admits a nondegenerate invariant Hermitian
  form $\langle,\rangle_V$. Then $D$ is self-adjoint for the Hermitian
  form
  $$\langle,\rangle_S \otimes \langle,\rangle_V.$$
  If $V$ has signature $(p,q)$, then $S\otimes V$ has signature
  $(2^mp,2^mq)$ (notation as in Proposition \ref{prop:spinrep}).
\end{proposition}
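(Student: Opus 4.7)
The plan is to verify the four assertions in sequence, with the nontrivial input being Kostant's Clifford-algebra computation of the image of $\Omega_K$.

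For independence of basis and $\widetilde K_0$-equivariance, I would view $D=\sum_j\xi(X_j)\otimes\sigma(X^j)$ as the image of the canonical element $\mathrm{id}_{\mathfrak s}\in\mathfrak s\otimes\mathfrak s^*$ under the bilinear pairing $(X,Y)\mapsto\xi(X)\otimes\sigma(Y)$, with $B$ used to identify $\mathfrak s^*\simeq\mathfrak s$. Basis-independence is then tautological. Equivariance follows because $\widetilde\Ad\colon\widetilde K_0\to\Spin(\mathfrak s({\mathbb R}))$ was constructed precisely so that conjugation by $\widetilde\Ad(k)$ inside $C(\mathfrak s({\mathbb R}))^\times$ sends $\sigma(X)$ to $\sigma(\Ad(k)X)$; meanwhile $\xi(k)\xi(X)\xi(k)^{-1}=\xi(\Ad(k)X)$ on $V$. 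Since $\Ad(k)\in SO(\mathfrak s,B)$ preserves $B$ and hence the canonical identity element, the two twists cancel.

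For the $D^2$ formula, I would expand
\[
D^2 = \sum_{i,j} \xi(X_i)\xi(X_j)\otimes\sigma(X^i)\sigma(X^j)
\]
and decompose each double product into its symmetric and antisymmetric parts in $(i,j)$; only the sym/sym and asym/asym pieces survive the sum. The symmetric part of $\sigma(X^i)\sigma(X^j)$ is the scalar $-B(X^i,X^j)$, and the symmetric/symmetric contribution collapses to $-\xi(\Omega_{\mathfrak s})\otimes 1_S$ with $\Omega_{\mathfrak s}=\Omega_G-\Omega_K$, using the $B$-orthogonal splitting $\mathfrak g=\mathfrak k\oplus\mathfrak s$. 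The antisymmetric part of $\xi(X_i)\xi(X_j)$ is $\tfrac12\xi([X_i,X_j])$ with $[X_i,X_j]\in\mathfrak k$, while the antisymmetric part of $\sigma(X^i)\sigma(X^j)$ lies in $C(\mathfrak s)_2$; rewriting the double sum along a basis of $\mathfrak k$ via the Lie-algebra embedding $\alpha=j\circ\mathrm{ad}\colon\mathfrak k\to C(\mathfrak s)_2$ of Proposition \ref{prop:spingroup}, one recognizes this piece as the cross term in the expansion of $(\sigma_{\widetilde K_0}\otimes\xi)(\Omega_K)$. Assembling the pieces produces $-\xi(\Omega_G)\otimes 1_S+(\sigma_{\widetilde K_0}\otimes\xi)(\Omega_K)-1_V\otimes\sigma(\alpha(\Omega_K))$. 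The final step is Kostant's identity $\sigma(\alpha(\Omega_K))=[B(\rho_G,\rho_G)-B(\rho_K,\rho_K)]\cdot 1_S$, which computes the image of the $K$-Casimir in the Clifford algebra as a scalar (cited from \cite{Kmult}); substituting gives the stated formula.

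With the formula in hand, assume $\Omega_G$ acts on $V$ by a scalar. Then $D^2$ equals that scalar plus the diagonal Casimir $(\sigma_{\widetilde K_0}\otimes\xi)(\Omega_K)$, which acts by the real scalar $B(\mu+\rho_K,\mu+\rho_K)-B(\rho_K,\rho_K)$ on each $\widetilde K_0$-isotypic component of highest weight $\mu$ (real because $B$ is real on the real span of weights of $T_c$). So decomposing $S\otimes V$ under $\widetilde K_0$ diagonalizes $D^2$ and witnesses the reality claim. For self-adjointness, I would pick the basis $\{X_j\}$ to lie in $\mathfrak s({\mathbb R})$; then $\{X^j\}\subset\mathfrak s({\mathbb R})$ because $B$ is real there. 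Invariance of $\langle,\rangle_V$ forces $\xi(X_j)$ to be skew-Hermitian on $V$, and Proposition \ref{prop:cliffherm} makes $\sigma(X^j)$ skew-Hermitian on $S$; the tensor product of two skew-Hermitian operators is self-adjoint, so each summand of $D$, hence $D$ itself, is self-adjoint for $\langle,\rangle_S\otimes\langle,\rangle_V$. Finally, since $\langle,\rangle_S$ is positive definite of dimension $2^m$, the signature of the tensor form is $(2^m p,2^m q)$. I expect the only genuinely nontrivial step to be Kostant's Clifford-algebra identity pinning down the constant $B(\rho_G,\rho_G)-B(\rho_K,\rho_K)$; the rest reduces to careful bookkeeping of the symmetric/antisymmetric splitting and the elementary skew-Hermiticity of the factors.
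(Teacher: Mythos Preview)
The paper does not actually prove this proposition: it is stated with the attribution ``Parthasarathy \cite{Pdirac}'' and used as a black box, so there is no argument in the paper to compare against. Your sketch is the standard proof (essentially Parthasarathy's original one, as reworked in later expositions such as Huang--Pand\v zi\'c), and the steps you outline are correct: the canonical-tensor interpretation for basis-independence and $\widetilde K_0$-equivariance; the sym/sym--asym/asym splitting of $D^2$ with the Clifford relation collapsing the symmetric Clifford piece to $-B(X^i,X^j)$; recognition of the antisymmetric piece as the cross term in the diagonal Casimir via the Lie-algebra map $\alpha=j\circ\mathrm{ad}\colon\mathfrak k\to C(\mathfrak s)_2$; and the skew-times-skew self-adjointness argument using Proposition~\ref{prop:cliffherm}.

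Two small remarks. First, the identity $\sigma(\alpha(\Omega_K))=[B(\rho_G,\rho_G)-B(\rho_K,\rho_K)]\cdot 1_S$ already appears in Parthasarathy's 1972 paper; Kostant's contribution in \cite{Kmult} is the cubic generalization, so your citation is slightly anachronistic. Second, your claim ``Invariance of $\langle,\rangle_V$ forces $\xi(X_j)$ to be skew-Hermitian'' uses that $X_j\in\mathfrak s({\mathbb R})\subset\mathfrak g({\mathbb R})$ and that you chose the basis to be real; it is worth saying this explicitly, since for complex $X$ the adjoint of $\xi(X)$ is $-\xi(\overline X)$ rather than $-\xi(X)$.
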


Here is Kostant's result about the spectrum of the Dirac operator.

\begin{proposition}[\cite{Kmult}]\label{prop:Kmult} Suppose that
  $F(\gamma_c)$ is an irreducible finite-dimensional representation of
  $G({\mathbb R})$ of highest weight
  $$\gamma_c =  (\lambda_c,\nu_c)$$
  as in Proposition \ref{prop:resext}, and $S$ is a spin
  representation of $\Spin({\mathfrak s}({\mathbb R}))$ as in
  Proposition \ref{prop:spinrep}. Regard $S\otimes F(\gamma_c)$ as a
  representation of $\widetilde K_0$ as in Proposition
  \ref{prop:dirac}.
  \begin{enumerate}
\item Every irreducible representation $\widetilde\tau$ of $\widetilde
  K_0$ on $S\otimes F(\gamma_c)$ has highest weight of the form
  $$\overline\phi + w\rho_G - \rho_K - 2\rho(B),$$
  for some $w\in W^1$ (see Corollary \ref{cor:Wupper1}),
  $\overline\phi$ a $wR^+_{\text{\textnormal{res}}}$-dominant
  restricted weight of $F(\gamma_c)$, and $B$ a set of noncompact
  imaginary roots in $wR^+$.
\item The scalar $\widetilde{\tau}(\Omega_K)$ satisfies
  $$\widetilde\tau(\Omega_K) \le \langle\lambda_c+\rho_G,\lambda_c+\rho_G\rangle
  -\langle \rho_K,\rho_K\rangle.$$
  Equality holds if and only if
  $$\overline\phi = w\lambda_c = w'\lambda_c, w\rho_G - 2\rho(B) =
  w'\rho_G$$
  for some $w'\in W^1$. In particular, this largest possible
  eigenvalue of $\widetilde\tau(\Omega_K)$ is equal to
  $$\langle \lambda_c+ \rho_G,\lambda_c+\rho_G\rangle -
  \langle\rho_K,\rho_K\rangle.$$
  \end{enumerate}
\end{proposition}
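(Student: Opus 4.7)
The plan is to follow Kostant's original argument from \cite{Kmult}: enumerate the $T_c$-weights of $S\otimes F(\gamma_c)$, identify highest weights of $\widetilde K_0$-irreducibles via the restricted Weyl group, and bound Casimir eigenvalues by a length inequality. The first step is to describe the weights of $S$. By Proposition \ref{prop:spinrep}, each such weight has the form $(1/2)\sum_j\epsilon_j\mu_j$, where $\pm\mu_j$ range over the positive $T_c$-weights of $\mathfrak{s}$ (namely, the positive noncompact imaginary roots together with the positive restricted complex roots). A direct bookkeeping shows their half-sum equals $\rho_G-\rho_K$, so every weight of $S$ takes the shape $\rho_G-\rho_K-\sum_{\gamma\in B_0}\gamma$ for some subset $B_0$ of positive $T_c$-weights of $\mathfrak{s}$. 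Combining with Proposition \ref{prop:resext}, every $T_c$-weight of $S\otimes F(\gamma_c)$ is then $\overline\phi+\rho_G-\rho_K-\sum_{\gamma\in B_0}\gamma$ for some restricted weight $\overline\phi$ of $F(\gamma_c)$.

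For part (1), let $\widetilde\tau\subset S\otimes F(\gamma_c)$ be irreducible with highest weight dominant for $R_K^+$. I would rewrite this into the canonical form using Corollary \ref{cor:Wupper1} and Proposition \ref{prop:semidirect}. For $w\in W^1$, $w$ flips only noncompact imaginary roots, so $w\rho_G-\rho_G=-2\rho(B_w)$ with $B_w\subset R_c^+$ a set of noncompact imaginary roots; letting $w$ range over $W^1$ and further subtracting $2\rho(B)$ for noncompact imaginary $B\subset wR_c^+$ exhausts the noncompact imaginary spin-weight shifts. Any complex-root contribution originally in $B_0$ is absorbed into the extremal-weight factor by applying a $W^\theta$-translate to $\overline\phi$ (using the $W^\theta$-invariance of restricted weights from Proposition \ref{prop:resext}(1)), at the cost of replacing $R_{\text{res}}^+$ by $wR_{\text{res}}^+$. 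The $R_K^+$-dominance of the highest weight then forces $\overline\phi$ to be $wR_{\text{res}}^+$-dominant, establishing (1).

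For part (2), the Casimir formula from \eqref{se:formsCas} applied to the highest weight from (1) gives
$$\widetilde\tau(\Omega_K)=\|\overline\phi+w\rho_G-2\rho(B)\|^2-\|\rho_K\|^2.$$
Writing $\overline\phi=\lambda_c-\Sigma_1$ as a positive-root sum below $\lambda_c$ (Proposition \ref{prop:resext}(3)) and $w\rho_G-2\rho(B)=\rho_G-\Sigma_2$ for a further positive-root sum $\Sigma_2=\sum_{\beta\in B_w\sqcup B}\beta$, the target inequality becomes $\|\lambda_c+\rho_G-(\Sigma_1+\Sigma_2)\|^2\le\|\lambda_c+\rho_G\|^2$. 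This follows from the regularity of the dominant weight $\lambda_c+\rho_G$ together with the standard length bound for weights of an irreducible representation. Equality forces $\Sigma_1=0$ (so $\overline\phi=w\lambda_c$ is extremal) and $\Sigma_2=\rho_G-w'\rho_G$ for some $w'\in W$ satisfying $w'\in W^1$ (so $w\rho_G-2\rho(B)=w'\rho_G$ lies in the $W$-orbit of $\rho_G$), matching the stated equality condition.

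The main obstacle will be the combinatorial rearrangement in part (1): verifying that every complex-root contribution in the raw expression is fully absorbed into the $W^\theta$-rotation of $\overline\phi$, leaving $B$ to contain only noncompact imaginary roots. This hinges on the semidirect structure $W^\theta=W_{K_0}\cdot W^1$ from Corollary \ref{cor:Wupper1}, the fact that $W^1$ fixes $R_K^+$, and careful bookkeeping of the Weyl group action on restricted weight spaces of $F(\gamma_c)$.
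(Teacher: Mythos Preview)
Your approach is essentially the same as the paper's, which itself is only a sketch pointing to \cite{Kmult} and \cite{HKP}. Both arguments enumerate the $T_c$-weights of $S$ as $\rho_G-\rho_K$ minus a sum of positive noncompact restricted roots, combine with restricted weights of $F(\gamma_c)$, and then bound $\widetilde\tau(\Omega_K)$ via the Casimir formula together with Proposition~\ref{prop:resext}.

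There is one place where the paper is cleaner than your proposal, and it is exactly the ``main obstacle'' you flag. Rather than starting from an arbitrary weight decomposition $\overline\phi_0 + (\text{weight of }S)$ and then trying to absorb the complex-root part of the spin weight into $\overline\phi_0$ by a $W^\theta$-move, the paper first decomposes $F(\gamma_c)$ as a $K_0$-module. The highest weight of $\widetilde\tau$ is then automatically of the form $\overline\phi + (\text{weight of }S)$ with $\overline\phi$ the highest weight of some $K_0$-constituent of $F(\gamma_c)$; such a $\overline\phi$ is $R_K^+$-dominant from the outset, hence $wR^+_{\text{res}}$-dominant for some $w\in W^1$ by the very definition of $W^1$ in \eqref{se:Wupper1}. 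This eliminates the need for your rearrangement step entirely. Your absorption argument, as written, is not obviously well-defined: moving a complex root from the $S$-side to the $F(\gamma_c)$-side requires the new $\overline\phi$ to still be a restricted weight of $F(\gamma_c)$ and the residual expression to still be a weight of $S$, and you have not checked this. The paper's route avoids the issue.

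For part (2) your argument is correct and matches the paper's: the Casimir formula from \eqref{se:formsCas} plus the height inequality of Proposition~\ref{prop:resext}(2)--(3) gives the bound and the equality characterization.
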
 

The proposition has been formulated in such a way as to outline its
proof in \cite{Kmult} and \cite{HKP}. The highest weight of any
representation of $K_0$ in $F(\gamma_c)$ must be a $K_0$-dominant
restricted weight of $F(\gamma_c)$, and therefore a
$wR^+_{\text{\textnormal{res}}}$-dominant restricted weight
$\overline\phi$. The highest weight of $\widetilde\tau$ must therefore
be equal to such a weight, plus a weight of $S$. A weight of $S$ is of
the form $w\rho_G - \rho_K - 2\rho(B)$. This is how (1) is proved. Now
the formula in \eqref{se:formsCas} for the eigenvalue of $\Omega_K$,
together with Proposition \ref{prop:resext}(2), leads easily to
(2).

\begin{corollary}\label{cor:D2} Suppose we are in the setting of Proposition
  \ref{prop:Kmult}.
  \begin{enumerate}
  \item The eigenvalues of $D^2$ on $S\otimes F(\gamma_c)$ are less than or equal
  to the positive number
  $$-\langle \nu_c,\nu_c\rangle.$$
  Equality occurs exactly on the representations of $\widetilde K_0$
  of highest weights
  $$w(\lambda_c + \rho_G) -\rho_K \qquad (w\in W^1),$$
  with $W^1$ as in Corollary \ref{cor:Wupper1}.
  \item Each such representation of $\widetilde K_0$ has multiplicity
    $$2^{[\ell/2]}, \qquad \ell = \dim A_c.$$
    \item The Hermitian form on $S\otimes F(\gamma_c)$ is definite on
      each such representation, of sign $\epsilon(w)$ computed in
      Corollary \ref{cor:extrsig}(4).
    \item Define
      $$p_0 = \sum_{w\in W^1,\epsilon(w)=+1} \dim
      E(w(\lambda_c+\rho_G) - \rho_K),$$
      $$q_0 = \sum_{w\in W^1,\epsilon(w)=-1} \dim
      E(w(\lambda_c+\rho_G) - \rho_K),$$
Then the the signature of the form on the largest eigenspace of
      $D^2$ is
      $$2^{[\ell/2]}(p_0,q_0).$$
  \end{enumerate}
\end{corollary}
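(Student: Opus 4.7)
The plan is to apply Parthasarathy's $D^2$ formula (Proposition \ref{prop:dirac}) together with Kostant's bound (Proposition \ref{prop:Kmult}), then track the Hermitian structure through the tensor decomposition $S \otimes F(\gamma_c)$. For (1), on any irreducible $\widetilde K_0$-constituent $\widetilde\tau$, the operator $D^2$ acts by the scalar $-\xi(\Omega_G) + \widetilde\tau(\Omega_K) - B(\rho_G,\rho_G) + B(\rho_K,\rho_K)$. Substituting the formula for $\xi(\Omega_G)$ on $F(\gamma_c)$ from \eqref{se:formsCas} and applying the bound in Proposition \ref{prop:Kmult}(2), the two copies of $B(\lambda_c+\rho_G,\lambda_c+\rho_G)$ cancel and the upper bound simplifies to $-B(\nu_c,\nu_c)$, a nonnegative number because $\nu_c$ is purely imaginary and $B$ is positive definite on ${\mathfrak a}_c \subset {\mathfrak s}({\mathbb R})$. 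Equality in Proposition \ref{prop:Kmult}(2), with its description $\overline\phi = w'\lambda_c$ and $w\rho_G - 2\rho(B) = w'\rho_G$, collapses the highest weight of $\widetilde\tau$ to $w'(\lambda_c+\rho_G) - \rho_K$ for some $w' \in W^1$, establishing (1).

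For (2), fix $w \in W^1$ and set $\mu_w = w(\lambda_c+\rho_G) - \rho_K$. The $T_c$-weight space $E(\mu_w) \subset S \otimes F(\gamma_c)$ decomposes as $\bigoplus_{\eta_1+\eta_2=\mu_w} F(\gamma_c)_{\eta_1} \otimes S_{\eta_2}$. If a summand with $\eta_1 \ne w\lambda_c$ were nonzero, it would generate a $\widetilde K_0$-subconstituent which by Proposition \ref{prop:Kmult}(1) has Casimir eigenvalue strictly below the Kostant maximum, contradicting that $\mu_w$ is an extremal highest weight. So the only contribution is from $\eta_1 = w\lambda_c$ (a one-dimensional extremal weight space of $F(\gamma_c)$, by Proposition \ref{prop:resext}) and $\eta_2 = w\rho_G - \rho_K$, which by Proposition \ref{prop:spinrep} has multiplicity $2^{[\ell/2]}$ in $S$ coming from the $\ell = \dim A_c$ zero weights of $T_c$ on ${\mathfrak a}_c$. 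Hence $\dim E(\mu_w) = 2^{[\ell/2]}$, and since $\mu_w$ is the highest weight of only the extremal $\widetilde K_0$-constituent, this equals the multiplicity of the irreducible representation with highest weight $\mu_w$, proving (2).

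For (3) and (4), the Hermitian form on $S \otimes F(\gamma_c)$ is the tensor of the positive definite form on $S$ (Proposition \ref{prop:cliffherm}) with $\langle\cdot,\cdot\rangle_{F(\gamma_c)}$. Restricted to $E(\mu_w) = F(\gamma_c)_{w\lambda_c} \otimes S_{w\rho_G-\rho_K}$, it factors as the sign-$\epsilon(w)$ form on the extremal line (Corollary \ref{cor:extrsig}(3)) tensored with a positive definite form on the $S$-weight space, hence is definite of sign $\epsilon(w)$. The $\widetilde\tau$-isotypic component for highest weight $\mu_w$ splits as $\widetilde\tau \otimes M$ with multiplicity space $M$ identified with $E(\mu_w)$; because the invariant form factors as the unique invariant form on $\widetilde\tau$ tensored with a Hermitian form on $M$, definiteness on $E(\mu_w)$ propagates to the whole isotypic component, giving (3). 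Summing the definite contributions over $w \in W^1$ with multiplicity $2^{[\ell/2]}$ and representation dimension $\dim E(w(\lambda_c+\rho_G)-\rho_K)$ yields the signature $2^{[\ell/2]}(p_0,q_0)$ of (4).

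The main obstacle is the multiplicity argument in (2): one must rule out accidental alternative decompositions of $\mu_w$ as a sum of $F(\gamma_c)$- and $S$-weights, for which the cleanest route is the contradiction-with-extremality sketched above, requiring careful use of the equality clause in Kostant's Proposition \ref{prop:Kmult}(2). Every other step is a direct computation from the cited formulas.
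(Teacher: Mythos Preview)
Your treatment of (1), (3), and (4) matches the paper's: combine Parthasarathy's formula with Kostant's bound for (1); for (3) observe that the highest weight vectors live in $F(\gamma_c)_{w\lambda_c}\otimes S_{w\rho_G-\rho_K}$, where the tensor form is $\epsilon(w)$ times a positive form, and propagate to the isotypic by Schur; (4) is bookkeeping.

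The gap is in (2). Your contradiction does not hold. You claim that a summand $v_{\eta_1}\otimes s_{\eta_2}$ with $\eta_1\ne w\lambda_c$ ``would generate a $\widetilde K_0$-subconstituent which by Proposition~\ref{prop:Kmult}(1) has Casimir eigenvalue strictly below the Kostant maximum, contradicting that $\mu_w$ is an extremal highest weight.'' But that vector has $T_c$-weight $\mu_w$, so any $\widetilde K_0$-constituent containing it has highest weight $\mu'$ with $\mu_w\le\mu'$ (dominance, since $\mu_w$ is $R^+_K$-dominant), hence $\mu'(\Omega_K)\ge\mu_w(\Omega_K)$; by maximality this forces $\mu'=\mu_w$. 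So the vector \emph{is} a highest weight vector, and there is no contradiction. What you have actually shown is only that $\dim(S\otimes F)_{\mu_w}$ equals the multiplicity of the $\mu_w$-irreducible, not that $\eta_1$ is forced to be $w\lambda_c$.

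Ruling out alternative decompositions $\mu_w=\eta_1+\eta_2$ is a genuine rigidity statement about when the norm inequality underlying Kostant's bound is saturated; the equality clause of Proposition~\ref{prop:Kmult}(2) as stated constrains \emph{one} decomposition of the highest weight (the one produced in part (1)), not every way of splitting $\mu_w$ as (weight of $F$)$+$(weight of $S$). The paper handles this honestly by pointing back into the \emph{proof} of Proposition~\ref{prop:Kmult}: that argument shows directly that the space of $\widetilde K_0$-highest weight vectors of weight $\mu_w$ is exactly $F(\gamma_c)_{w\lambda_c}\otimes S_{w\rho_G-\rho_K}$, whence the multiplicity equals the $S$-weight multiplicity $2^{[\ell/2]}$ computed after \eqref{eq:spinwts}. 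Your closing paragraph correctly flags this as the main obstacle, but the ``contradiction-with-extremality'' you propose does not close it; you really do need the finer analysis from Kostant (or Huang--Kang--Pand\v zi\'c).
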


\begin{proof_sketch}
Part (1) is precisely Proposition \ref{prop:Kmult}, together with
Par\-tha\-sa\-rathy's formula in Proposition \ref{prop:dirac} for
$D^2$. For (2), the proof of Proposition \ref{prop:Kmult} shows that
the multiplicity of such a representation of $\widetilde K_0$ is equal to the
multiplicity of a highest weight space of $S$ of weight $\rho_G -
\rho_K$. (That was the reason for recalling the proof above.) This
last weight multiplicity is computed after \eqref{eq:spinwts}; it is 
$2^{[\ell/2]}$. For (3), this same proof shows that the highest weight
vector of such a representation is equal to a weight vector in
$F(\gamma_c)$ of weight $w\lambda_c$ (which by definition has length a
positive multiple of $\epsilon(w)$) tensored with a vector in $S$
(which has positive length). Part (4) just writes (3) explicitly.
\end{proof_sketch}

\begin{lemma}\label{lemma:selfadjsig} Suppose that $T$ is a linear
  operator on a finite-dimensional Hermitian vector space $V$,
  self-adjoint with respect to a Hermitian form of signature
  $(P,Q)$; and suppose that $T$ has purely imaginary eigenvalues.
  \begin{enumerate}
  \item For $x\ne 0$, the Hermitian form defines an isomorphism
    $$V_{ix}^h \simeq V_{-ix}.$$
    In particular, the eigenspaces $V_{ix}$ and $V_{-ix}$ have the
    same dimension $m(x)$, and contribute $(m(x),m(x))$ to the
    signature.
    \item The Hermitian form has a nondegenerate restriction to the
      kernel
      $$V_0 = \ker T,$$
      where it has signature $(p_1,q_1)$.
    \item The signatures on $V$ and $V_0$ satisfy
$$P-Q = p_1 - q_1.$$
In particular, the Signature invariant for $V$ is equal to that for
$V_0$:
        $$ \Sig(V) = |P-Q| = |p_1-q_1| = \Sig(V_0).$$
  \end{enumerate}
\end{lemma}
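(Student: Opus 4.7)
The plan is to split $V$ orthogonally as the kernel $V_0$ plus ``hyperbolic'' pairs $V_{ix}\oplus V_{-ix}$, and then read off signatures by additivity.

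First I would use self-adjointness to establish the basic orthogonality relation. For $v\in V_{ix}$ and $w\in V_{iy}$,
$$ix\,\langle v,w\rangle=\langle Tv,w\rangle=\langle v,Tw\rangle=\overline{iy}\,\langle v,w\rangle=-iy\,\langle v,w\rangle,$$
so $\langle v,w\rangle=0$ unless $y=-x$. Thus $V_{ix}$ is self-isotropic for $x\ne 0$ and is orthogonal to every eigenspace other than $V_{-ix}$. Because the global form on $V$ is nondegenerate and $V_{ix}$ pairs trivially with everything outside $V_{-ix}$, the restricted pairing $V_{ix}\times V_{-ix}\to\mathbb C$ must itself be nondegenerate, giving the isomorphism $V_{ix}^h\simeq V_{-ix}$ and equal dimensions $m(x)$. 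Choosing dual bases on the two sides, the Gram matrix on $V_{ix}\oplus V_{-ix}$ is antidiagonal with blocks equal to the identity, which diagonalizes via $e_j\pm f_j$ to signature $(m(x),m(x))$. This gives (1).

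Next, for (2) I would take $y=0$ in the same orthogonality relation: $V_0=\ker T$ is orthogonal to every $V_{ix}$ with $x\ne 0$. Since $V$ decomposes as $V_0\oplus\bigoplus_{x\ne 0}V_{ix}$, the nondegeneracy of the global form forces nondegeneracy on $V_0$. For (3), I would invoke additivity of signature over the orthogonal decomposition $V=V_0\oplus\bigoplus_{x>0}(V_{ix}\oplus V_{-ix})$, obtaining $(P,Q)=(p_1,q_1)+\sum_{x>0}(m(x),m(x))$ and hence $P-Q=p_1-q_1$ and $\Sig(V)=\Sig(V_0)$.

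The main subtlety is that the argument relies on the eigenspace decomposition $V=\bigoplus V_{ix}$, i.e.\ diagonalizability of $T$, which is not explicit in the hypotheses; self-adjointness together with purely imaginary eigenvalues does not by itself force diagonalizability. To handle this I would either treat diagonalizability as implicit in the phrase ``$T$ has purely imaginary eigenvalues,'' or replace each eigenspace by its generalized counterpart $V_{ix}^{\mathrm{gen}}$. The orthogonality calculation goes through unchanged for generalized eigenspaces (each $V_{ix}^{\mathrm{gen}}$ pairs nondegenerately only with $V_{-ix}^{\mathrm{gen}}$), yielding the same identity with $V_0^{\mathrm{gen}}$ in place of $V_0$. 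In the Dirac-operator application, $T=D$ acts on each $\widetilde K_0$-isotypic component of $S\otimes V$ with $D^2$ scalar, so non-diagonalizability of $D$ could occur only on $\ker D^2$, and the generalized-eigenspace variant of the lemma is what is needed there.
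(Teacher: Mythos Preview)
Your argument is exactly the one the paper has in mind: it declares the lemma ``immediate'' and records only the identity
\[
P = p_1 + \sum_{x>0} m(x), \qquad Q = q_1 + \sum_{x>0} m(x),
\]
which is precisely your orthogonal decomposition plus additivity of signature. Your observation about diagonalizability is well taken and goes beyond the paper: the lemma as stated tacitly assumes $T$ is diagonalizable (as your nilpotent example on a signature-$(1,1)$ space shows, (2) can fail otherwise), and the paper does not comment on this; your generalized-eigenspace patch is the right repair and is indeed what is needed in the Dirac application, where only $\ker D^2$ is at issue.
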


Once stated, this result is immediate; what is true is
$$P = p_1 + \sum_{x>0} m(x), \qquad Q = q_1 + \sum_{x>0} m(x).$$

Here at last is the main theorem.

\begin{theorem}\label{thm:Gsig} Suppose in the setting of
  \eqref{se:reswts} and \eqref{eq:domherm} that $F(\gamma_c)$ is a
  finite-dimensional representation of $G({\mathbb R})$ admitting an
  invariant Hermitian form $\langle\cdot,\cdot\rangle_{F(\gamma_c)}$;
  we normalize the form to be positive on the $\gamma_c$ weight
  space. Write $2r$ for the number of noncompact imaginary and complex
  restricted roots of $T_c$ in $G$:
$$2r = \dim G/H_c - \dim K/T_c.$$
Then (with notation as in Corollary \ref{cor:D2})
$$\Sig(F(\gamma_c)) = |p_0 - q_0|/2^r.$$
\end{theorem}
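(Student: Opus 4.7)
\begin{proof_sketch}
The plan is to compute $\Sig(F(\gamma_c))$ by passing to the tensor product $S\otimes F(\gamma_c)$, applying Lemma~\ref{lemma:selfadjsig} to the self-adjoint Dirac operator $D$, and identifying the signature on $\ker D$ via Corollary~\ref{cor:D2}. By Proposition~\ref{prop:cliffherm} the spin representation $S$ admits a positive invariant Hermitian form, and Proposition~\ref{prop:dirac} then gives that the tensor form on $S\otimes F(\gamma_c)$ has signature $(2^m p, 2^m q)$ whenever $F(\gamma_c)$ has signature $(p,q)$, while $D$ is self-adjoint with respect to this form. Hence $\Sig(S\otimes F(\gamma_c)) = 2^m\Sig(F(\gamma_c))$.

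The heart of the argument is the application of Lemma~\ref{lemma:selfadjsig} to $D$, and the main obstacle is verifying its hypothesis that $D$ has purely imaginary eigenvalues. Observe that $D^2$ acts as a real scalar on each $\widetilde K_0$-isotypic component, bounded above by $-\langle\nu_c,\nu_c\rangle$ (Corollary~\ref{cor:D2}(1)). Since $F(\gamma_c)$ is algebraic, its highest weight character restricts to the split real torus $A_c = \exp(\mathfrak a_c)$ as a character with positive real values, forcing $\nu_c \in \mathfrak a_c^*(\mathbb R)$. Proposition~\ref{prop:herm}(7) additionally requires $\nu_c$ to be purely imaginary in the Hermitian case, whence $\nu_c = 0$; so $D^2\le 0$, and because $D^2$ is a real scalar on each isotypic component, the eigenvalues of $D$ lie in $i\mathbb R$. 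Lemma~\ref{lemma:selfadjsig} then yields $\Sig(S\otimes F(\gamma_c)) = \Sig(\ker D)$.

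Corollary~\ref{cor:D2}(3) shows the Hermitian form is definite on each $\widetilde K_0$-isotypic summand of the maximal $D^2$-eigenspace, so the self-adjoint nilpotent operator $D$ must vanish there; thus $\ker D$ coincides with this maximal eigenspace, and Corollary~\ref{cor:D2}(4) identifies $\Sig(\ker D) = 2^{[\ell/2]}|p_0-q_0|$ with $\ell = \dim A_c$. Combining gives
\[
2^m|p-q| \;=\; 2^{[\ell/2]}|p_0-q_0|, \qquad \Sig(F(\gamma_c)) \;=\; \frac{|p_0-q_0|}{2^{m-[\ell/2]}}.
\]
To finish, we check $m - [\ell/2] = r$: decomposing $\mathfrak s(\mathbb C)$ under $T_c$, the zero weight space has dimension $\ell$ and the non-zero weights occur in pairs $\pm\gamma_j$ with $j=1,\dots,r$ indexing the positive complex and noncompact imaginary roots of $H_c$ (see \eqref{eq:ncptroots}). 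Thus $n = \dim\mathfrak s(\mathbb R) = \ell + 2r$ and $m = [n/2] = [\ell/2] + r$, as required.
\end{proof_sketch}
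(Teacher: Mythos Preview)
Your overall approach matches the paper's: tensor $F(\gamma_c)$ with the spin module $S$, invoke self-adjointness of $D$ from Proposition~\ref{prop:dirac}, apply Lemma~\ref{lemma:selfadjsig}, and read the signature of $\ker D$ off of Corollary~\ref{cor:D2}. The dimension count $m-[\ell/2]=r$ at the end is exactly the paper's.

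There is one genuine gap. You assert that ``$F(\gamma_c)$ is algebraic'' and conclude that the highest weight restricts to $A_c$ with positive real values, hence $\nu_c\in\mathfrak a_c^*(\mathbb R)$. Nothing in the hypotheses (\eqref{se:reswts} and \eqref{eq:domherm}) forces the representation to be algebraic. The setting allows $G$ to be reductive with a split central torus, and on the central part of $A_c$ the highest weight may be any continuous character $a\mapsto a^{i\nu}$ with $\nu$ real; these are unitary one-dimensional characters that are not algebraic. Strong integrality constrains $\nu_c$ only on the span of the coroots, which for the maximally compact Cartan need not exhaust $\mathfrak a_c$. So the implication ``$\nu_c$ real and purely imaginary $\Rightarrow\nu_c=0$'' is unjustified as stated. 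The paper handles this by an easy reduction you can adopt verbatim: pass to the commutator subgroup (where the coroots do span, forcing $\nu_c=0$), or equivalently tensor with the unitary character of weight $(0,-\nu_c)$, which leaves the signature unchanged and replaces $\gamma_c$ by $(\lambda_c,0)$.

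On the other hand, your argument that $\ker D$ coincides with the maximal $D^2$-eigenspace is a point the paper passes over. Your reasoning---that $D$ is $\widetilde K_0$-equivariant, hence preserves each isotypic summand of the $D^2=0$ eigenspace, on which the form is definite by Corollary~\ref{cor:D2}(3), so the self-adjoint nilpotent $D$ must vanish there---is correct and makes the application of Lemma~\ref{lemma:selfadjsig} honest.
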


\begin{proof}
  \begin{subequations}\label{se:Gsig}
The approximate idea is to apply Lemma \ref{lemma:selfadjsig} to the
Dirac operator $D$. This is indeed a self-adjoint linear operator on the
finite-dimensional Hermitian vector space $F(\gamma_c)\otimes
S$. Write $(p,q)$ for the signature of the form on $F(\gamma_c)$; then
the form on $F(\gamma_c)\otimes S$ has signature
\begin{equation}\label{eq:bigsig}
  (P,Q) = 2^m(p,q) \qquad (m = [\dim {\mathfrak s}/2])
\end{equation}
(see Proposition \ref{prop:spinrep}). Corollary \ref{cor:D2} says that
the eigenvalues of $D^2$ are less than or equal to $-\langle
\nu_c,\nu_c\rangle$, and that the signature of the form on the largest
eigenspace is
\begin{equation}
  2^{[\ell/2]}(p_0,q_0).
\end{equation}

Suppose for a moment that
\begin{equation}\label{eq:kerD}
  \nu_c=0.
\end{equation}
Then the Corollary says that the
eigenvalues of $D^2$ are less than or equal to zero. From this it
follows that the eigenvalues of $D$ (as square roots of non-positive
real numbers) are purely imaginary. Therefore Lemma
\ref{lemma:selfadjsig} applies, and tells us that
\begin{equation}
  P-Q = 2^{[\ell/2]}(p_0-q_0).
\end{equation}
Combining this with \eqref{eq:bigsig} gives
\begin{equation}
  2^{[\dim {\mathfrak s}/2]}(p-q) = 2^{[\ell/2]}(p_0-q_0).
\end{equation}
Because of \eqref{eq:ncptroots},
\begin{equation}
\dim {\mathfrak s} = 2r + \ell,
\end{equation}
with $r$ the number of complex and noncompact imaginary positive
restricted roots, and $\ell$ the dimension of $A_c$. Therefore
\begin{equation}
  (p-q) = 2^{-r}(p_0-q_0),
\end{equation}
which is precisely the conclusion of the theorem.

So what if $\nu_c\ne 0$? In this case $D^2$ has at least some strictly
positive eigenvalues, meaning that $D$ has some real
eigenvalues. The proof of Lemma \ref{lemma:selfadjsig} would tell us
that we could compute $\Sig$ by restricting the form to these real
eigenspaces. The largest of these real eigenvalues we understand, but
the smaller ones are not easily accessible. So the proof strategy
appears to fail.

There are at least two ways out. The simplest is to work not with $G$
but with its commutator subgroup, a semisimple group. We already know
that an integral weight (like $(\lambda_c,\nu_c)$) must take real
values on the real span of the coroots. If $G$ is semisimple, this
real span of the coroots is
$$i{\mathfrak t}_c({\mathbb R}) + {\mathfrak a}_c({\mathbb R}).$$
Therefore the purely imaginary linear functional $\nu_c$ on $
{\mathfrak a}_c({\mathbb R})$ must be zero, and we are back in the
case \eqref{eq:kerD}.

A second (equivalent) method is to use the strongly integral weight
$$\chi = (0,-\nu_c).$$
This weight $\chi$ is the differential of a one-dimensional unitary
character ${\mathbb C}_\chi$ of $G({\mathbb R})$, so the signature of
$F(\gamma_c)$ is the same as the signature of $F(\gamma_c)\otimes
{\mathbb C}_\chi$. This latter representation has highest weight
$(\gamma_c, 0)$, so we are again in the case \eqref{eq:kerD}.

A third (still equivalent!) method would be to use not the
Dirac operator of \eqref{eq:D}, but one built from ${\mathfrak
  s}({\mathbb R}) \cap [{\mathfrak g},{\mathfrak g}]$. The reason we
did not do that is that there is a long history and literature
attached to Parthasarathy's Dirac operator; we preferred to use it and
to make this extra argument at the end.
\end{subequations} 
\end{proof}

\begin{proof}[Proof of Theorem \ref{thm:GLsig}]
  \begin{subequations}\label{se:GLsigpf}
    Now $G=GL(n,{\mathbb R})$, and
  \begin{equation}
    {\mathfrak s}({\mathbb R}) = \text{real symmetric matrices.}
  \end{equation}
We will treat the case $n=2m$ is {\em even}; the case of odd $n$ is
similar but slightly simpler, and we leave it to the reader. The
maximal compact torus is
\begin{equation}
  T_c({\mathbb R}) = SO(2)^m, \qquad X^*(T_c) \simeq {\mathbb Z}^m.
\end{equation}
The restricted root system is (see Table \ref{tab:resroots})
\begin{equation}
  R_{\text{\textnormal{res}}} = C_m, \quad
  R_{\text{\textnormal{cplx}}} = D_m, \quad
  R_{\text{\textnormal{imag}}} = A_1^m;
\end{equation}
all of the imaginary roots are noncompact. We use the standard
positive root system
\begin{equation}
  R^+_{\text{\textnormal{res}}} = \{e_j \pm e_k \mid 1\le j < k \le
  m\} \cup \{2e_j\}.
\end{equation}
Then we calculate
\begin{equation}
  \rho_G = (2m-1,2m-3,\ldots,1),\qquad \rho_K = (m-1,m-2,\ldots,0),
\end{equation}
\begin{equation}
  W^1 = W(A_1) = \{1,s_m\},
\end{equation}
with $s_m$ the reflection in the simple root $2e_m$.

The theorem concerns a restricted highest weight
\begin{equation}
  \lambda_c = (2\mu_1,\ldots,2\mu_m) = \left((\lambda_1 -
  \lambda_n),\cdots,(\lambda_m - \lambda_{m+1})\right).
\end{equation}
In the theorem we took all the $\mu_j$ to be integers, but Proposition
\ref{prop:fin} says that we can allow all the $\mu_j$ to be
half-integers as well.

By definition $\epsilon(1)=1$; Corollary \ref{cor:extrsig} says that
\begin{equation}
  \epsilon(s_m) = (-1)^{2\mu_m} = \begin{cases} 1 & \lambda_j \in
    {\mathbb Z} \\ -1 & \lambda_j \in {\mathbb Z} + 1/2.\end{cases}
\end{equation}

The two highest weights of $\widetilde K_0$ on the largest eigenspace
of $D^2$ are
\begin{equation}
  (2\mu_1,\ldots,2\mu_{m-1},\pm 2\mu_m) +
  (m,\ldots,2,\pm 1).
\end{equation}
These two representations of $\Spin(2m)$ differ by the outer
automorphism coming from $O(2m)$, so they have the same dimension. The
computation of the signature from Theorem \ref{thm:Gsig} is therefore
\begin{equation}
\Sig(\pi(\lambda)) = \begin{cases} 2\cdot\dim
  E(2\mu_1+m,\ldots,2\mu_m+1)/2^{m^2} & \lambda_j \in {\mathbb Z}\\ 0 &
  \lambda_j \in {\mathbb Z}+1/2.\end{cases}
\end{equation}
The dimension in this formula is calculated by the Weyl dimension
formula for $D_m$; the weight that must be inserted in the formula is
the highest weight plus $\rho_K$, which is
\begin{equation}
  (2\mu_1+2m-1,\ldots,2\mu_m+1).
\end{equation}
Now the Weyl dimension formula is a {\em homogeneous} polynomial of
degree $m^2-m$ (the number of positive roots for $D_m$; so
\begin{equation}
  \dim E(k\psi + (k-1)\rho_K) = k^{m^2-m} \dim E(\psi).
\end{equation}
If we apply this formula with $k=2$, we get
\begin{equation}
\Sig(\pi(\lambda)) = \begin{cases} 2\cdot\dim
  E(\mu_1+1/2,\ldots,\mu_m+1/2)/2^m & \lambda_j \in {\mathbb Z}\\ 0 &
  \lambda_j \in {\mathbb Z}+1/2.\end{cases}
\end{equation}
The first formula here is precisely Theorem \ref{thm:GLsig} (in case
$n$ is even).
\end{subequations} 
\end{proof}

\begin{bibdiv}
\begin{biblist}[\normalsize]

\bib{AvTV}{article}{
author={Adams, Jeffrey},
author={Leeuwen, Marc van},
author={Trapa, Peter},
author={Vogan, David A., Jr.},
title={Unitary representations of real reductive groups},
eprint={arXiv:1212.2192 [math.RT]},
}

\bib{Helgason}{book}{
author={S.~Helgason},
title={Differential Geometry, Lie Groups, and Symmetric Spaces},
publisher={Academic Press},
address={New York, San Francisco, London},
date={1978},
}

\bib{HKP}{article}{
   author={Huang, Jing-Song},
   author={Kang, Yi-Fang},
   author={Pand\v zi\'c, Pavle},
   title={Dirac cohomology of some Harish-Chandra modules},
   journal={Transform. Groups},
   volume={14},
   date={2009},
   number={1},
   pages={163--173},
}

\bib{Kmult}{article}{
   author={Kostant, Bertram},
   title={A cubic Dirac operator and the emergence of Euler number
   multiplets of representations for equal rank subgroups},
   journal={Duke Math. J.},
   volume={100},
   date={1999},
   number={3},
   pages={447--501},
}

\bib{Pdirac}{article}{
author={R.~Parthasarathy},
title={Dirac operator and the discrete series},
journal={Ann.\ of Math.},
volume={96},
date={1972},
pages={1--30},
}

\bib{Vgreen}{book}{
author={Vogan, David A., Jr.},
title={Representations of Real Reductive Lie Groups},
publisher={Birk\-h\"aus\-er},
address={Boston-Basel-Stuttgart},
date={1981},
}

\bib{IC4}{article}{
author={Vogan, David A., Jr.},
title={Irreducible characters of semisimple Lie
groups. IV. Char\-ac\-ter-mul\-ti\-plic\-i\-ty duality},
journal={Duke Math. J.},
volume={49},
number={4},
date={1982},
pages={943--1073},
}

\bib{atlas}{misc}{
title={Atlas of Lie Groups and Representations software},
note={\url{http://www.liegroups.org}},
year={2018}
}

\end{biblist}
\end{bibdiv}

\end{document}